\documentclass[a4paper,12pt]{article}

\usepackage[T1]{fontenc}

\usepackage{amsthm}{\normalsize }
\usepackage{amsmath}
\usepackage{mathtools,leftindex,tensor,mhchem}
\usepackage{amssymb}
\usepackage{booktabs}
\usepackage{multirow}
\usepackage{bm}
\usepackage{array}
\usepackage{latexsym}
\usepackage{float}
\usepackage{diagbox} 
\usepackage{threeparttable}
\usepackage[textwidth=18cm,textheight=20cm]{geometry}

\usepackage[usenames]{color}
\usepackage[colorlinks=true]{hyperref}
\definecolor{mygray}{gray}{0.9}
\definecolor{deeppink}{RGB}{255,20,147}
\definecolor{mygreen}{rgb}{0.05, 0.576, 0.03}
\definecolor{myred}{rgb}{0.768, 0.09, 0.09}

\usepackage{subfigure}
\usepackage{caption}
\usepackage{indentfirst}
\usepackage{biblatex}
\newtheorem{theorem}{Theorem}[section]
\newtheorem{proposition}[theorem]{Proposition}
\newtheorem{lemma}[theorem]{Lemma}
\newtheorem{corollary}[theorem]{Corollary}
\newtheorem{remark}[theorem]{Remark}

\newtheorem{example}[theorem]{Example}

\newcommand{\R}{\mathbb{R}}

\usepackage{graphicx} 
\title{\bf Ruin theory incorporating MIPP-type jumps}
\author{Dongdong Hu \thanks{Yiwu Industrial \& Commercial College, Yiwu, China. Email: \underline{hudongdong@ywicc.edu.cn}} \and Hasanjan Sayit \thanks{Department of Financial and Actuarial Mathematics, Xi'an Jiaotong Liverpool University, Suzhou, China. Email: \underline{Hasanjan.Sayit@xjtlu.edu.cn}}}

\date{}

\begin{document}

\maketitle

\textbf{Abstract:} The paper investigates the ruin probability of an insurer’s surplus process when claims follow a Multiply Iterated Poisson Process (MIPP). This setting extends the classical Cramér- Lundberg model by allowing for clustered claim arrivals, making it particularly appropriate for modeling catastrophic insurance losses. The primary contribution is the derivation of explicit criteria for ultimate ruin, showing that the ruin probability is largely determined by the jump intensity parameter and the iteration count. Furthermore, we establish integro-differential equations for the survival and ruin probabilities and compute their Laplace transforms. These transforms are then linked via recursive formulas that relate ruin probabilities at consecutive iteration levels. We also introduce a Cramér–Lundberg-style approximation, which provides asymptotic expressions for ruin probabilities along with a Lundberg-type inequality. Numerical examples, including comparisons with the classical model, are provided to validate the theoretical results and to highlight the impact of claim clustering on the insurer’s risk of ruin.

\textbf{Keywords:} Multiple subordination; Poisson process; Martingale; Jump time; Ruin theory; Scale function

\section{Introduction}

The classical Cramér-Lundberg model, which assumes that claims arrive according to a homogeneous Poisson process, has traditionally formed the core framework of insurance risk theory, see \cite{asmussen2010} and \cite{rolski1999}. While this model is both elegant and mathematically manageable, it crucially relies on the hypothesis that claim arrivals are independent and memoryless—a property that frequently contradicts empirical evidence in many insurance sectors. This discrepancy is particularly pronounced in catastrophe and liability insurance, where claims tend to occur in clusters rather than as single, isolated events. The necessity of capturing such clustering behavior has driven an extensive body of research on risk models with more flexible arrival
mechanisms, including renewal processes, Cox processes, and various classes of time-changed Poisson
processes, see \cite{Willmot2017}, \cite{Albrecher2020}, \cite{selch2016}.
A particularly promising class of models arises from iterated or compound Poisson constructions, where the basic counting process is itself modified through successive stochastic time changes. This approach, known as multiple subordination, has gained considerable attention in recent years because it can generate highly flexible dependence patterns and tail behavior while still maintaining analytical tractability. The Multiple Iterated Poisson Process (MIPP), introduced in \cite{Hu-2026}, offers a systematic framework for constructing such models. In a MIPP, one composes a sequence of independent Poisson processes in a recursive manner, producing a counting process $V_t^{(n)}$ whose intensity is governed by a nested hierarchy of Poisson clocks. This hierarchical structure induces rich clustering effects that can be tuned to replicate the irregular, bursty arrival of claims typically seen in practice.

The MIPP framework admits a natural interpretation in catastrophe risk modeling: a primary Poisson process generates catastrophic events, and each of these events triggers a secondary Poisson process of smaller claims, which may in turn initiate further layers down the hierarchy. This multilevel mechanism mirrors the cascading nature of real-world catastrophe losses, where a single major event (such as a hurricane or earthquake) leads to a cluster of insurance claims unfolding over time. Through this recursive nesting of Poisson processes, one obtains a flexible and probabilistically coherent representation of such phenomena.

Although the probabilistic features of the MIPP—such as its jump-time distribution, probability mass function, and Lévy exponent—have been thoroughly analyzed in \cite{Hu-2026}, their implications for insurer solvency have so far remained largely unexamined. In the broader ruin-theoretic literature, compound Poisson models with heavy-tailed claim sizes \cite{2016Yuliya}, Lévy-driven surplus processes \cite{Kyprianou2014}, and various extensions of the Cramér–Lundberg framework have been studied in depth. However, the specific scenario in which the claim-counting mechanism itself follows a hierarchical iterated structure has attracted very limited attention. This paper fills that gap by carrying out a systematic ruin analysis for a surplus process driven by an MIPP. We identify the critical net profit condition, demonstrate a phase transition in the ruin probability as the iteration depth increases, derive integro-differential equations and explicit Laplace transforms for the survival probability, and establish a Cramér–Lundberg-type asymptotic approximation. In doing so, we generalize classical ruin-theoretic instruments—such as the adjustment coefficient, the scale function, and the Lundberg inequality—to a framework that more accurately captures the clustered character of catastrophic insurance claims, thereby providing both conceptual advances and practical computational tools for risk management.

In this paper, we study a ruin model driven by jumps of an MIPP.
We examine an insurer’s surplus process given by
\begin{equation}\label{one1}
X_t^{(n)} = x + ct - \sum_{i=0}^{V_t^{(n)}} \xi_i, \; \; t \geq 0,
\end{equation}
where $x>0$ denotes the initial reserve, $c>0$ is a constant premium inflow rate, $V_t^{(n)}$ is an MIPP counting process with intensity $\lambda>0$, and $\{\xi_i\}$ is a sequence of independent and identically distributed positive claim sizes with finite mean $m = E\xi_i$. At each jump time of the MIPP, the process $V_t^{(n)}$ may increase by more than one unit, thus modeling the arrival of a batch (cluster) of claims. The total claim size associated with such a batch is the sum of a random number of individual claims, where this number is distributed as $V_1^{(n-1)}$, i.e., the value of the inner MIPP process at time 1.

The ruin probability
\[
\Pi_n(x)=P\big(\inf_{t\geq 0}X_t^{(n)}(x)<0\big)
\]
denotes the likelihood that the insurer’s surplus will eventually fall below zero. It is widely regarded as the central risk measure in insurance risk theory, as it delivers a direct and intuitive indicator of an insurer’s long-term solvency. In contrast to finite-horizon ruin probabilities—which evaluate the chance of insolvency within a fixed time interval and are mainly used for short-term operational purposes—the ultimate ruin probability reflects the insurer’s lifetime insolvency risk and is thus crucial for strategic planning, regulatory supervision, and structuring reinsurance arrangements. 

From a regulatory standpoint, $\Pi_n(x)$ forms the basis for setting adequate capital buffers: supervisors require insurers to maintain sufficient surplus so that the ruin probability remains below prescribed levels. This consideration is central to regimes such as Solvency II in Europe and the Risk-Based Capital (RBC) framework in the United States, both of which emphasize tail risk and severe loss scenarios. For internal capital allocation, $\Pi_n(x)$ plays a pivotal role in distributing economic capital among various business segments or regions, supporting consistent risk pricing and enabling insurers to optimize their portfolios in light of the underlying claim arrival structure. In addition, for reinsurance design and pricing, the ruin probability guides the structuring and negotiation of treaties by quantifying the cedent’s vulnerability to large, potentially clustered loss events, thereby helping to determine the desired degree of risk transfer and the corresponding premium. 

Within the MIPP framework, the ruin probability $\Pi_n(x)$ reflects the hierarchical nature of the claim count process; as a result, it is influenced not only by the initial surplus $x$, the premium rate $c$, and the claim size distribution, but also by the iteration level $n$ and the intensity parameter $\lambda$. This richer dependence enables a more realistic evaluation of solvency risk in environments where claims follow cascading, clustered patterns, as is typical in catastrophe insurance, where one major event can give rise to a series of subsequent losses. By establishing explicit ultimate ruin conditions, as well as deriving integro-differential equations, Laplace transforms, and asymptotic approximations for $\Pi_n(x)$, this paper develops a unified analytical framework that connects classical ruin theory to the complex clustered claim arrival processes encountered in practice, and in doing so delivers both theoretical advances and practical methods for solvency analysis, capital management, and reinsurance planning.

The main contributions of this paper can be summarized as follows. We first identify the critical premium threshold $\lambda^n m$ relative to the mean claim intensity: if $c\le \lambda^n m$, ruin occurs almost surely ($\Pi_n(x)=1$ for all $x>0$); if $c>\lambda^n m$, ruin is no longer certain and $\Pi_n(x)\to 0$ as $x\to\infty$. This result extends the classical net profit condition to the MIPP framework and shows that the iteration depth $n$ effectively amplifies the claim intensity. 

We then derive integro-differential equations for the survival probability $\Lambda_n(x)=1-\Pi_n(x)$ and prove that
\[
 c\Lambda_n^{'}(x)=\lambda q_{n-1}\Lambda_n(x)-\lambda\sum_{k=1}^{\infty}P(V_1^{(n-1)}=k)\int_0^x\Lambda_n(x-y)\,dF^{*k}(y),
\]
where $q_{n-1}=P(V_1^{(n-1)}>0)$ and $F^{*k}$ is the $k$-fold convolution of the claim size distribution $F$. Together with the boundary condition $\Lambda_n(0)=1-\lambda^n m/c$, this equation uniquely determines the survival probability as the solution of a Volterra integral equation of the second kind.

Next, we carry out a full Laplace transform analysis of the ruin probability. We obtain the explicit representation
\[
\mathcal{L}_{\Lambda_n}(\theta)=\frac{c-\lambda^nm}{c\theta-\lambda\bigl(1-g_{n-1}^{\circ}(E[e^{-\theta\xi}])\bigr)}, \quad \theta>0,
\]
where $g_{n-1}^{\circ}$ denotes the $(n-1)$-fold composition of $g(x)=e^{-\lambda+\lambda x}$. This formula both yields a practical approach for computing ruin probabilities via numerical Laplace inversion and unveils a recursive structure:
\[
\mathcal{L}_{\Lambda_{n+1}}(\theta)=\frac{c-\lambda^{n+1}m}{c\theta-\lambda\left(1-\exp\left\{\frac{c-\lambda^nm}{\mathcal{L}_{\Lambda_n}(\theta)}-c\theta\right\}\right)}.
\]
This recursion permits the Laplace transform of the survival function at level $n+1$ to be obtained directly from that at level $n$, yielding an efficient computational scheme.

We further derive the Cramér–Lundberg approximation for the ruin probability in the MIPP setting. Under the net profit condition $c>\lambda^n m$, we prove that
\[
\Pi_n(x)\sim C_n e^{\hat{R}_n x}, \quad x\to\infty,
\]
where $\hat{R}_n>0$ is the unique positive root of the Cramér–Lundberg equation
\[
\lambda (g_{n-1}^{\circ}(E e^{R\xi})-1)=cR,
\]
and $C_n$ is given explicitly. This extends the classical Cramér–Lundberg asymptotics to the MIPP framework and yields a convenient approximation for large initial surplus.
Finally, we conduct a numerical investigation to show how the ruin probability depends on the iteration level $n$, the intensity $\lambda$, and the claim size distribution. We corroborate the theoretical results by applying numerical Laplace inversion and by comparing the Cramér–Lundberg approximation with the exact ruin probabilities. The numerical experiments indicate that, when $\lambda>1$, increasing $n$ substantially elevates the ruin probability, capturing the effect of stronger clustering of claims. For $\lambda=1$, as $n\to\infty$ the ruin probability converges to $m/c$, while for $\lambda<1$ it converges to zero, thereby revealing a phase transition in the limiting behavior of the risk model.

The remainder of the paper is organized as follows. Section 2 provides preliminary results for MIPP. Section 3 presents the main results on ruin probabilities, including the critical threshold theorem, the asymptotic behavior as $n\rightarrow \infty$, and the net profit condition. Section 4 derives the integro-differential equations governing the survival and ruin probabilities. Section 5 develops the Laplace transform method, presenting the recursive representation for the Laplace transform of the survival probability and the construction of the scale function. Section 6 establishes the Cramér–Lundberg approximation and gives explicit expressions for the adjustment coefficient and the associated asymptotic constant.   
\section{Preliminaries}
Before delving into the ruin analysis, we first establish the necessary probabilistic framework by recalling the definition and key properties of the Multiply Iterated Poisson Process (MIPP). This section serves as a foundational prerequisite for the subsequent sections, providing the analytical tools and distributional results that underpin our main theorems. Following the notation of \cite{Hu-2026}, we define the MIPP through a recursive composition of independent Poisson processes, a construction that yields a rich hierarchical structure capable of modeling clustered claim arrivals. We review the essential characteristics of this process, including its jump-time distribution, probability mass function, and L\'evy exponent, all of which play a central role in the derivation of the integro-differential equations and Laplace transforms presented later. In particular, we highlight the key observation that the MIPP is a subordinator—a non-negative Lévy process with non-decreasing sample paths—which allows us to leverage the well-developed theory of Lévy processes. We also present the characteristic exponent $\ell_n(\theta)=\lambda e^{\ell_{n-1}(\theta)}-\lambda$, a recursive formula that encapsulates the hierarchical nature of the process and will be instrumental in computing the Laplace transform of the aggregate claim process. To provide intuition, we include graphical illustrations of the probability mass function and the moment behavior for different values of 
$\lambda$ and iteration levels $n$, demonstrating how the distribution evolves as the hierarchy deepens: for $\lambda<1$,  mass concentrates at zero; for $\lambda=1$, he distribution becomes increasingly dispersed, and for $\lambda>1$, both the mean and variance grow rapidly. These visualizations not only validate the theoretical results from \cite{Hu-2026} but also foreshadow the phase transition in ruin probabilities that will be established in Theorem \ref{th0514}.

Before turning to a more detailed analysis of MIPPs, we first restate their definition, adopting the same notation as in \cite{Hu-2026}. Let $N_t^1, N_t^2, \ldots, N_t^n$ be $n$ independent Poisson processes, each with rate $\lambda$. Set $U_t^{(1)} = N_t^1, U_t^{(2)} = N_t^2, \ldots, U_t^{(n)} = N_t^n$. For integers $1 \le k \le l \le n$, define
\[
V_t^{(k,k)} = U_t^{(k)}, \quad
V_t^{(k,k+1)} = U^{(k+1)}\bigl(V_t^{(k,k)}\bigr), \quad
V_t^{(k,k+2)} = U^{(k+2)}\bigl(V_t^{(k,k+1)}\bigr), \ \ldots,\ 
V_t^{(k,l)} = U^{(l)}\bigl(V_t^{(k,l-1)}\bigr).
\]
Through this recursive scheme, $V_t^{(k,l)}$ is obtained by iteratively composing the Poisson processes $N_t^k, N_t^{k+1}, \ldots, N_t^l$. We refer to $V_t^{(k,l)}$ as a multiply iterated Poisson process (MIPP, for short). As in \cite{Hu-2026}, and for notational simplicity, we write
\[
V_t^{(l)} := V_t^{(1,l)}, \quad \forall\; l \geq 2.
\]
From the above definition, we immediately have
\begin{equation}\label{kVn}
V_t^{(n)}=N^{n}_{V_t^{(n-1)}}, \; \; V^{(n)}_t={V}^{(2, n)}_{N_t^1}.
\end{equation}
Hence, the process $V_t^{(n)}$ is a specific instance of the class of models considered in \cite{Orsingher_Toaldo_2015}. More precisely, in their notation, $V_t^{(n)}$ corresponds to $N^{f}(t)$, with the subordinator $H^{f}(t)$ identified as $V_t^{(n-1)}$. Observe also that $V_t^{(2, n)}$ and $V_t^{(n-1)}$ have the same distribution.

The MIPP process was analyzed in \cite{Hu-2026} within the framework of ruin theory. In particular, explicit expressions were obtained for the jump times and sojourn times of MIPP, which we summarize here. Let $\tau_k^{(n)}$ denote the $k$-th jump time of $V_t^{(n)}$. It is shown in \cite{Hu-2026} that the sojourn time $S^{(n)}\overset{d}{=}\tau_{k+1}^{(n)}-\tau_{k}^{(n)}$ is exponentially distributed with rate parameter $\lambda P(V^{(n-1)}_1>0)$, i.e.,
\[
S^{(n)}\sim \mathrm{EXP}\big(\lambda P(V^{(n-1)}_1>0)\big).
\]
The probability mass function (PMF) of the MIPP is given by
\begin{equation}\label{71}
\begin{split}
P(V_t^{(n)}=k)=\frac{\lambda^k}{k!}\sum_{j=0}^{+\infty} j^k e^{-\lambda j} P(V_t^{(n-1)}=j), \quad k\geq 0.
\end{split}
\end{equation}

Using this formula, we plot the probability mass function of $V_1^{(n)}$ for various values of $n$ and $\lambda$:
\begin{figure}[htb]
\centering
\subfigure[$n=1$]{
\includegraphics[width=5cm]{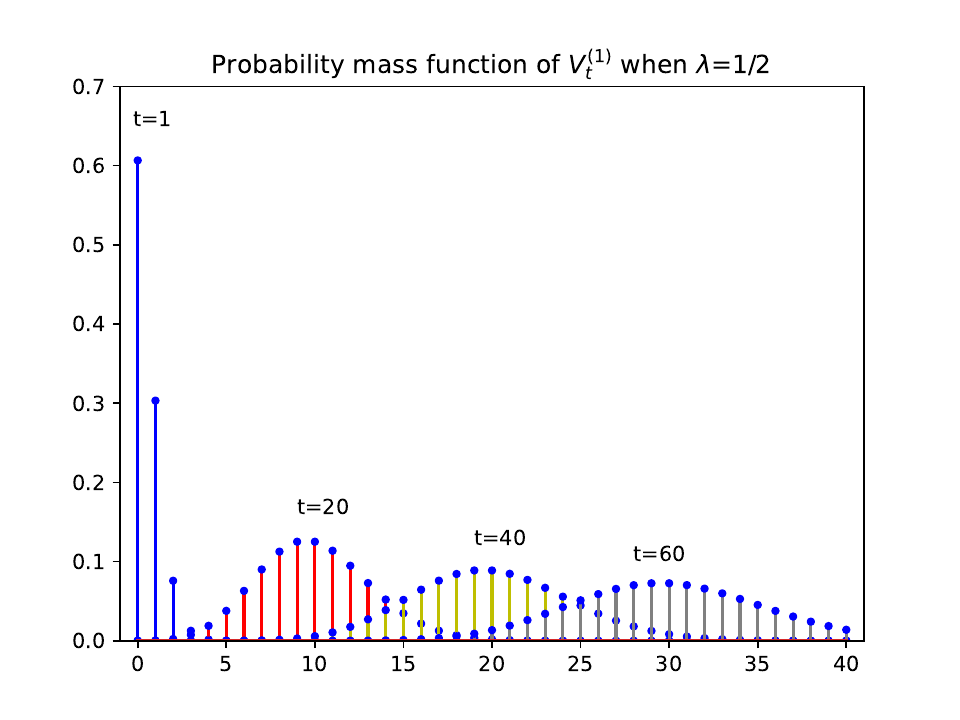}
}
\quad
\subfigure[$n=2$]{
\includegraphics[width=5cm]{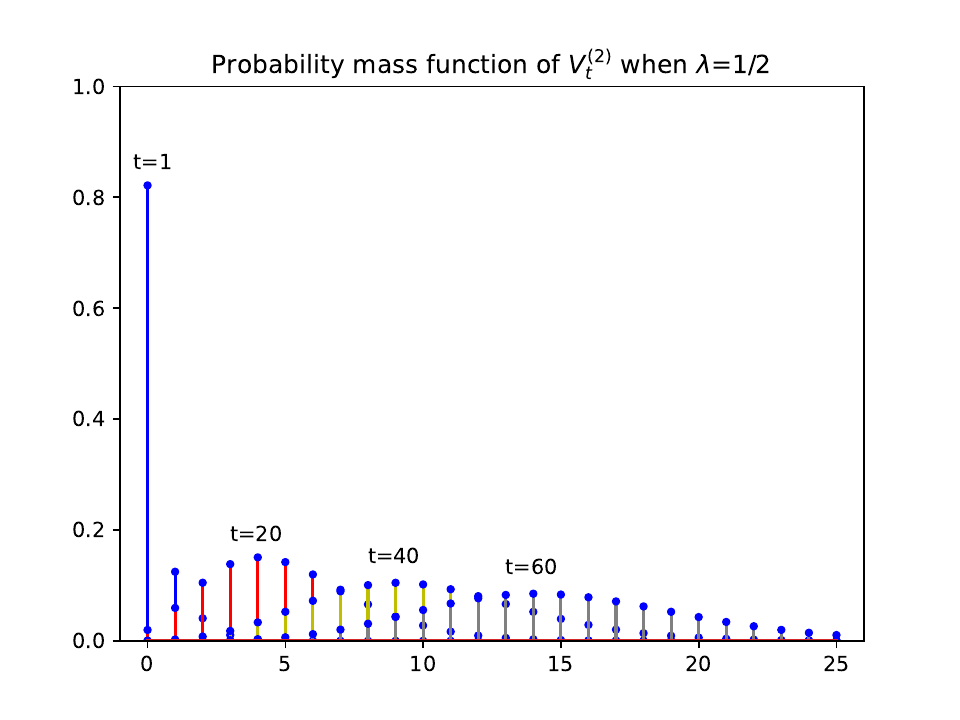}
}
\quad
\subfigure[$n=3$]{
\includegraphics[width=5cm]{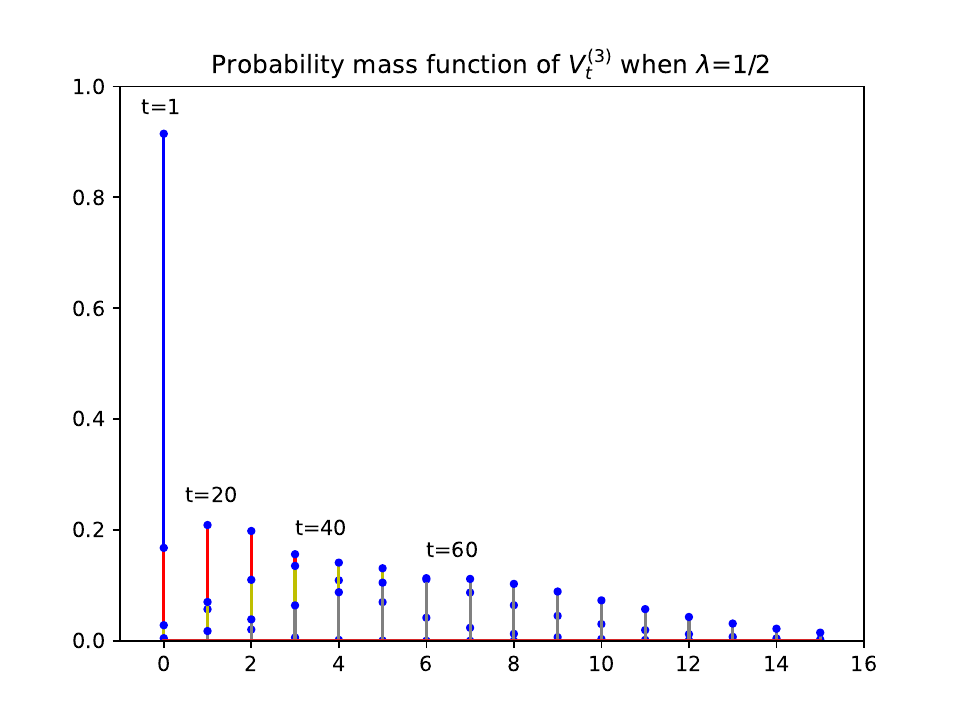}
}
\caption{Fixing $\lambda=\frac{1}{2}$, the PMF for $n=1,2,3$ as functions of different $t$}
\end{figure}
From these plots, we observe that when $\lambda=1/2$, the probability mass accumulates at the origin as $n$ increases. 

\begin{figure}[htb]
\centering
\subfigure[$n=1$]{
\includegraphics[width=5cm]{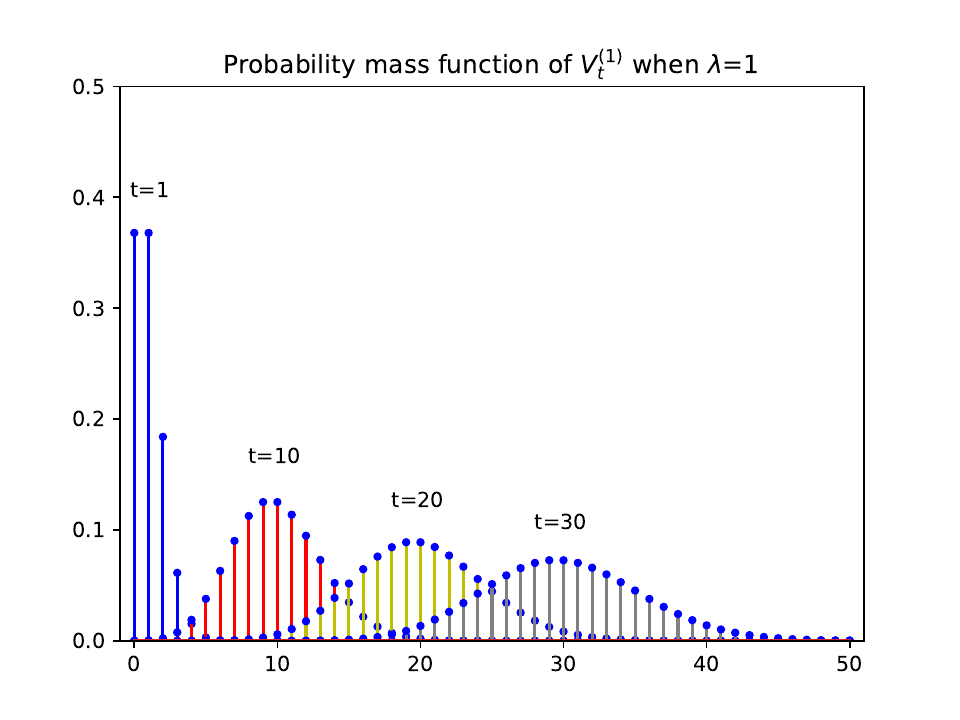}
}
\quad
\subfigure[$n=2$]{
\includegraphics[width=5cm]{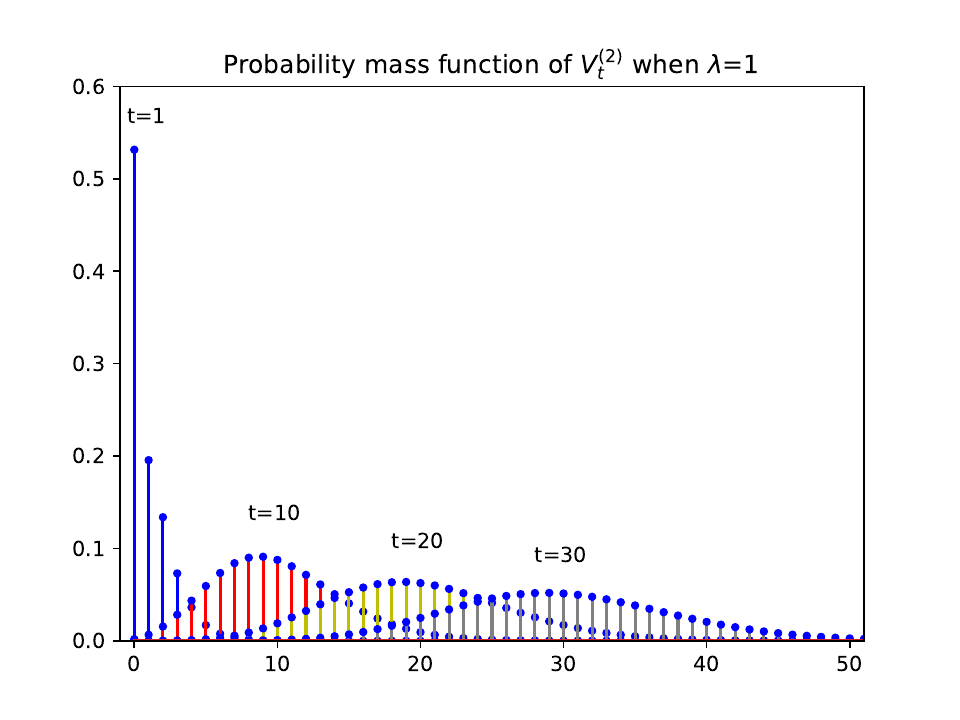}
}
\quad
\subfigure[$n=3$]{
\includegraphics[width=5cm]{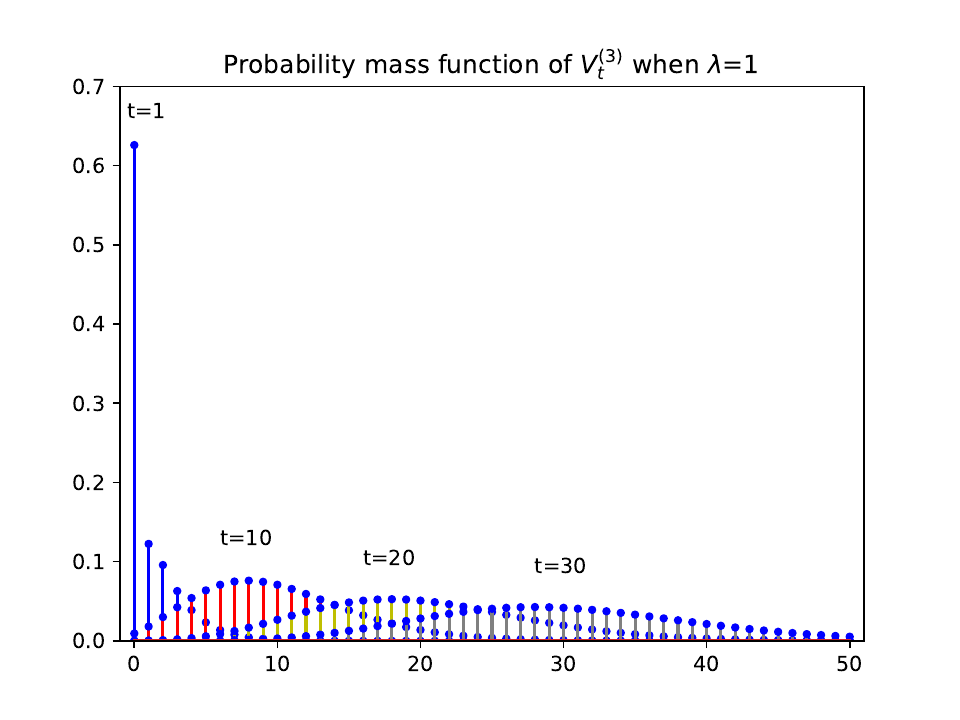}
}
\caption{For $\lambda=1$, the PMF of MIPP for $n=1,2,3$ across different $t$}
\end{figure}
For $\lambda=1$, it is evident that as $n$ increases, the corresponding distributions become more spread out.

\begin{figure}[htb]
\centering
\subfigure[$n=1$]{
\includegraphics[width=5cm]{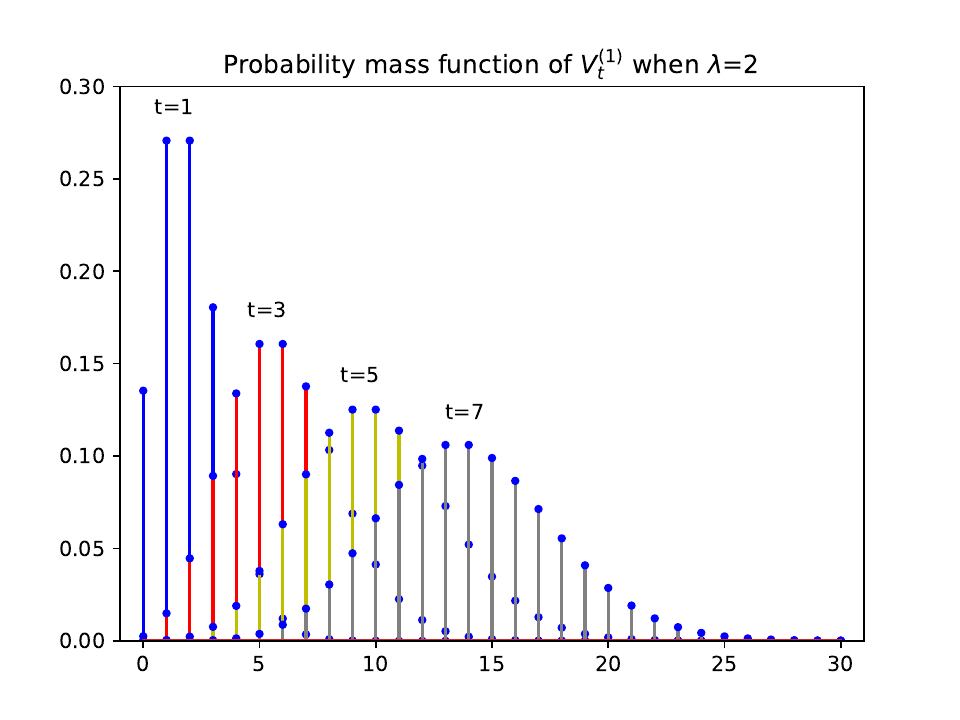}
}
\quad
\subfigure[$n=2$]{
\includegraphics[width=5cm]{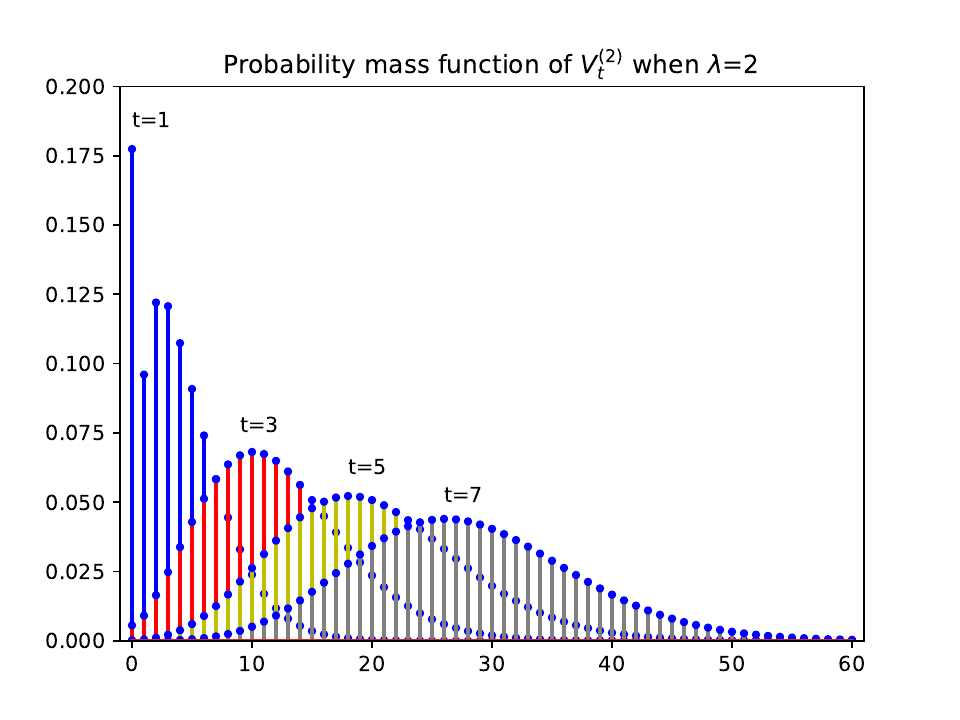}
}
\quad
\subfigure[$n=3$]{
\includegraphics[width=5cm]{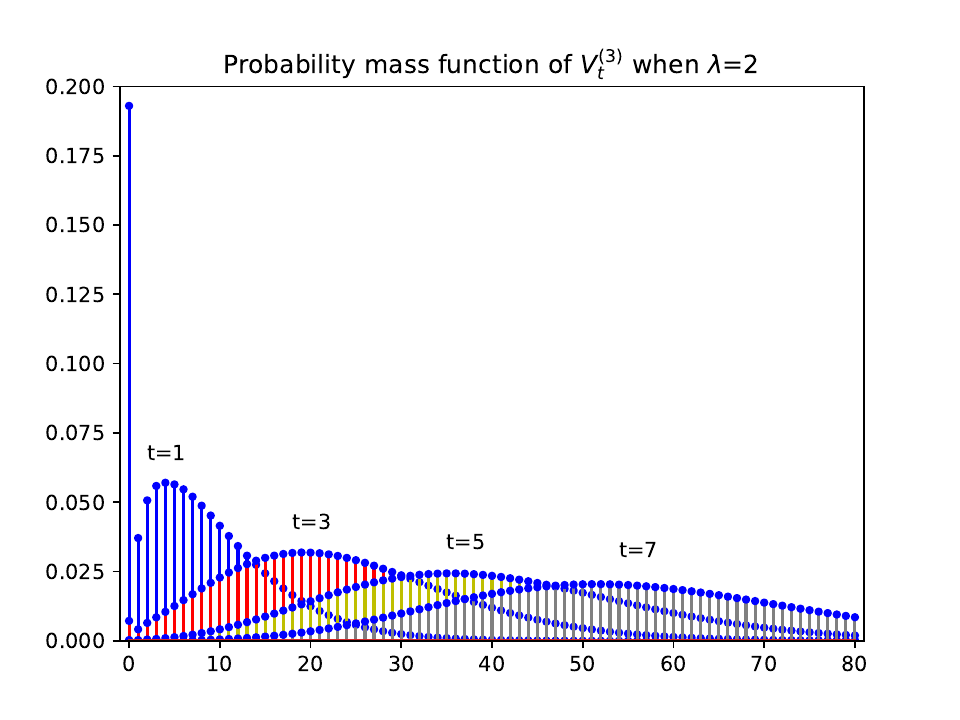}
}
\caption{For $\lambda=2$, the PMF of MIPP for $n=1,2,3$ as functions of different $t$}
\end{figure}
In the case $\lambda=2$, we see that, for each fixed time, the support of the distribution expands and the distribution becomes progressively more diffuse as $n$ increases.

All of these qualitative behaviors are consistent with the results presented in the Appendix of \cite{Hu-2026}.

It is evident that $V_t^{(n)}$ is a subordinator, that is, a non-negative L\'evy process with non-decreasing sample paths, which can be drawn as follows. 
\begin{figure}[htb]
\centering
\subfigure[$\lambda=1/2$]{
\includegraphics[width=5cm]{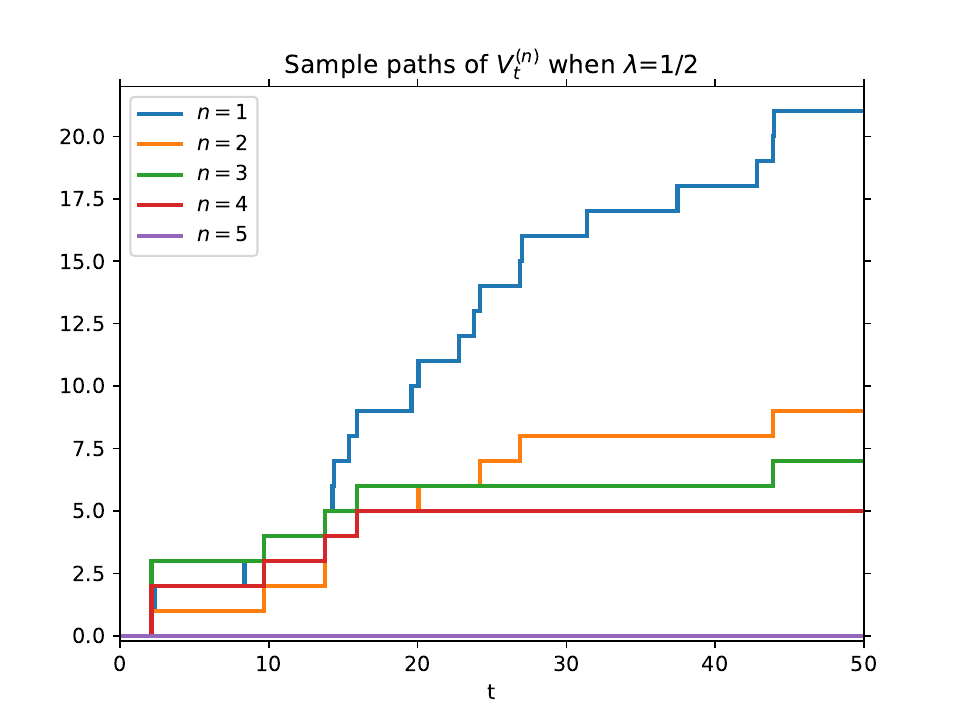}
}
\quad
\subfigure[$\lambda=1$]{
\includegraphics[width=5cm]{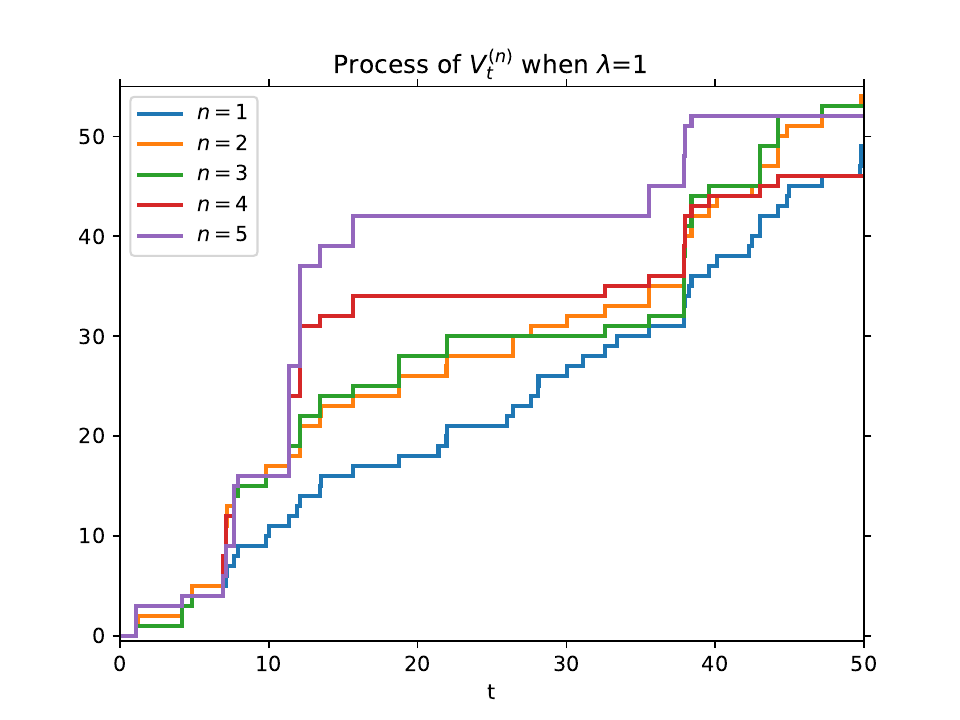}
}
\quad
\subfigure[$\lambda=2$]{
\includegraphics[width=5cm]{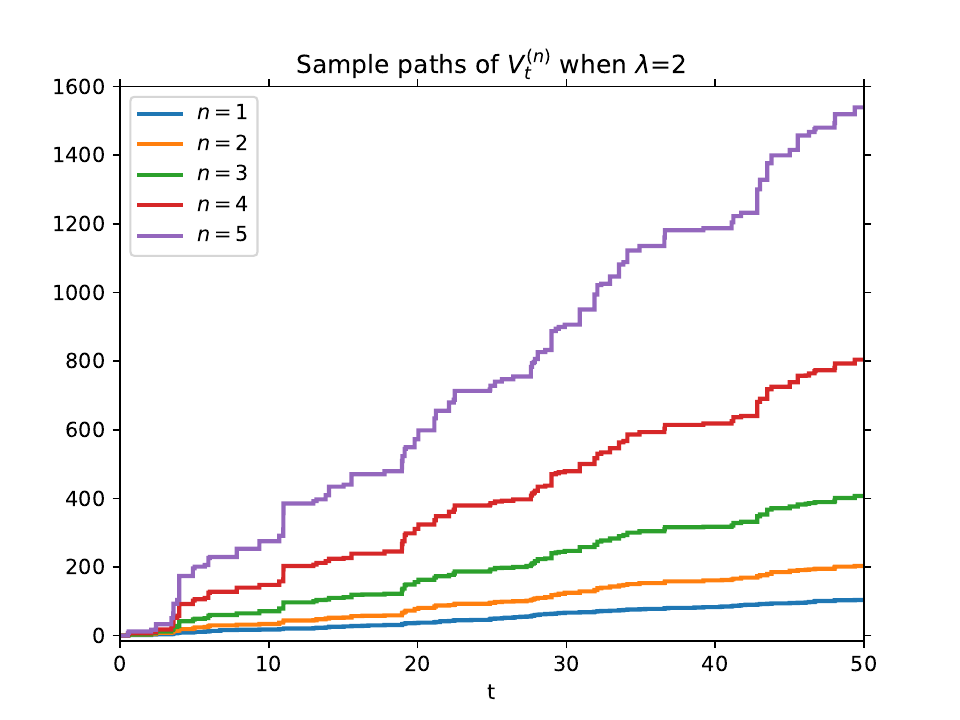}
}
\caption{The sample paths of $V_t^{(n)}$ w.r.t. different $\lambda$'s}
\end{figure}

Let $\ell_1(\theta)=\lambda(e^{\theta}-1)$ denote the characteristic exponent of the Poisson process. For $n\geq 1$, we write $\ell_n(\theta)$ for the characteristic exponents of $V_t^{(n)}$. One can easily check that
\begin{equation}\label{ell}
\ell_n(\theta)=\lambda e^{\ell_{n-1}(\theta)}-\lambda, \; \; n\geq 2.
\end{equation}
Indeed, note that
\begin{equation}
\begin{split}
e^{t\ell_n(\theta)}=&Ee^{\theta V_t^{(n)}}=\sum_{k=0}^{+\infty}Ee^{\theta V_k^{(2,n)}}P(N^1_t=k)=\sum_{k=0}^{+\infty}(Ee^{\theta V_1^{(n-1)}})^kP(N^1_t=k)\\
=&\sum_{k=0}^{+\infty}e^{k\ell_{n-1}(\theta)}P(N^1_t=k)=e^{t(\lambda e^{\ell_{n-1}(\theta)}-\lambda)},
\end{split}
\end{equation}
which immediately yields (\ref{ell}). The subsequent result is established in the paper \cite{Hu-2026}.

According to the expressions in the Appendix of \cite{Hu-2026}, we can obtain the figures of the mean, variance, skewness and kurtosis for different $\lambda$'s at $t=1$ as follows.

\begin{figure}[htb]
\centering
\subfigure[Mean]{
\includegraphics[width=3.8cm]{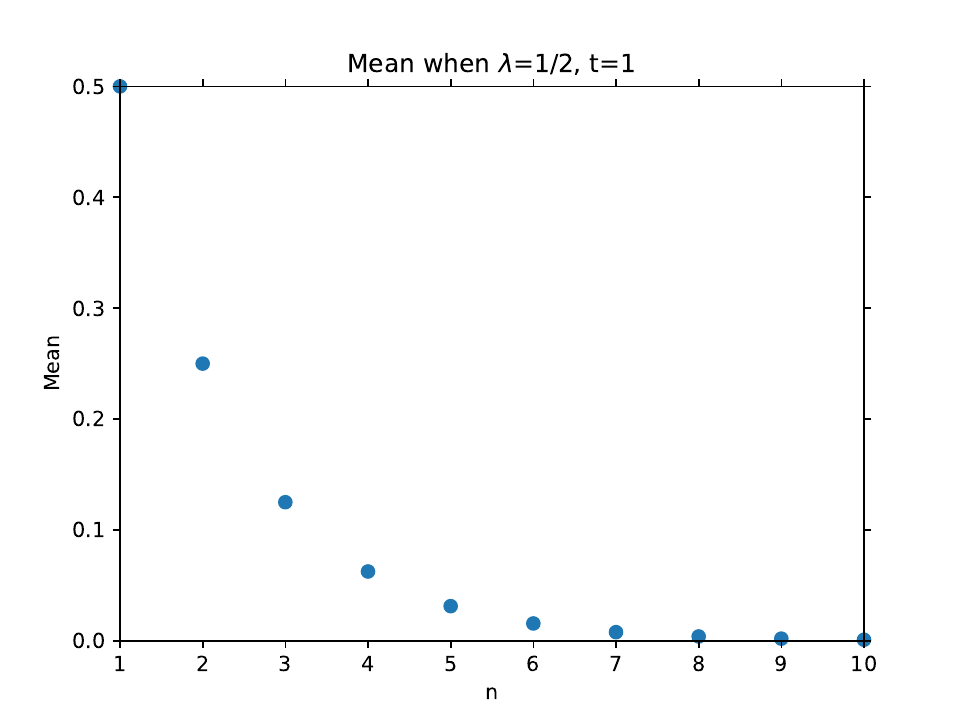}
}
\quad
\subfigure[Variance]{
\includegraphics[width=3.8cm]{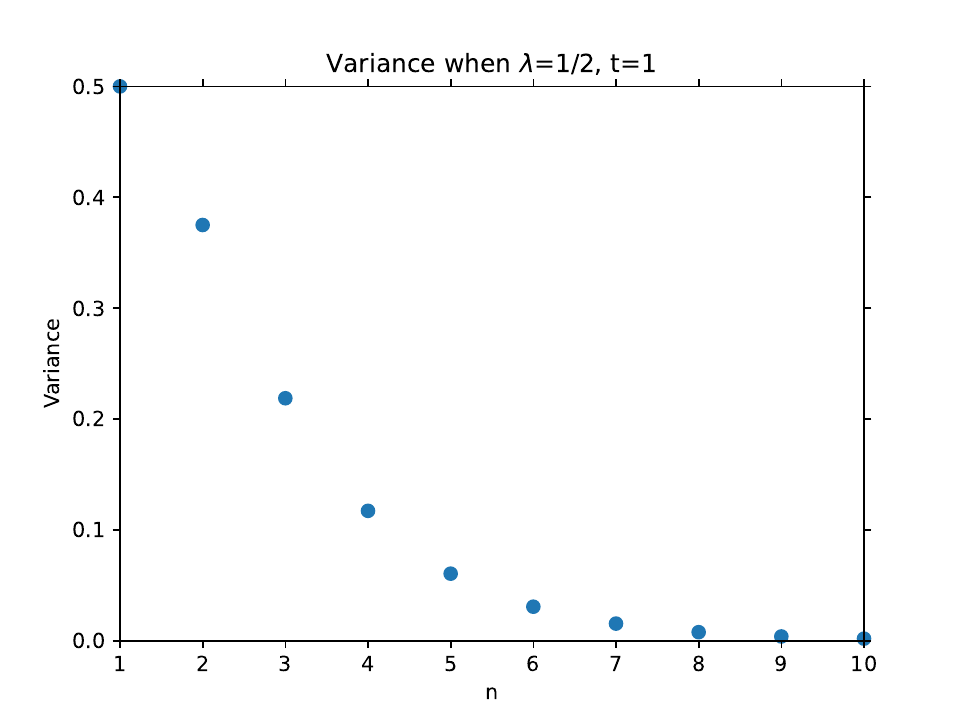}
}
\quad
\subfigure[Skewness]{
\includegraphics[width=3.8cm]{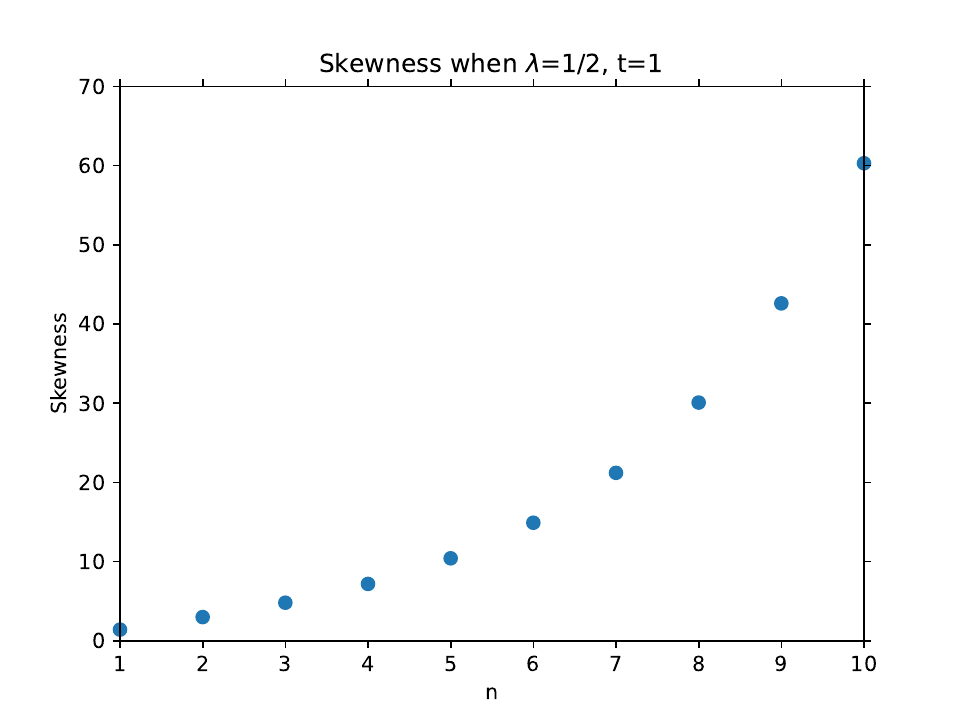}
}
\quad
\subfigure[Kurtosis]{
\includegraphics[width=3.8cm]{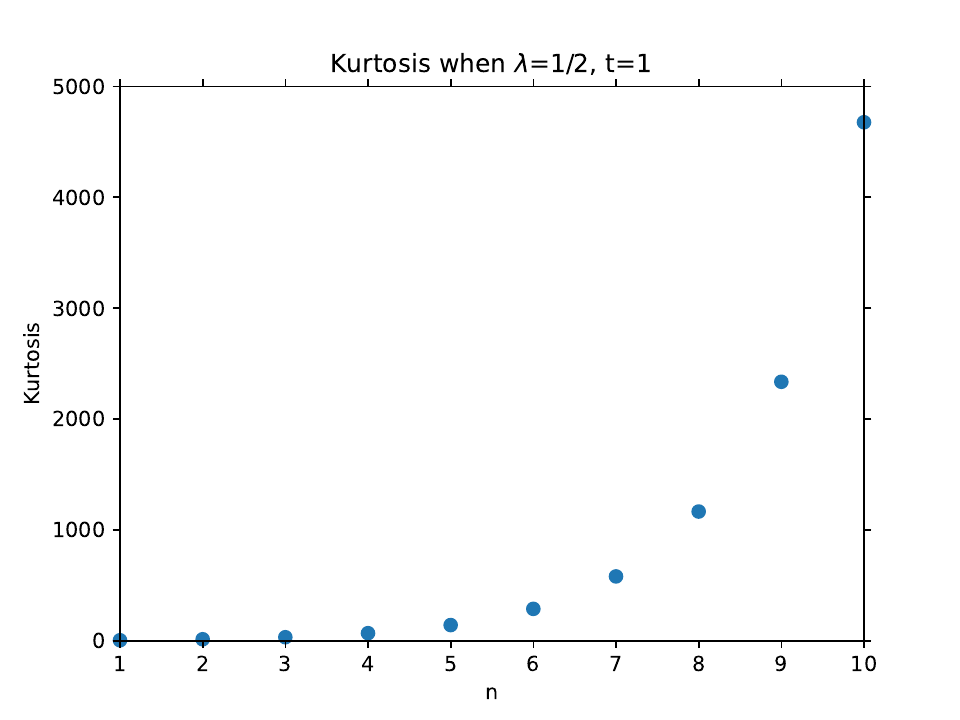}
}
\caption{When $\lambda=\frac{1}{2}$ the moments of MIPP w.r.t. $n$ from 1 to 10}
\end{figure}
In the $\lambda=\frac{1}{2}$ case, we can see that the mean and variance tend to zero when $n$ increases, but the skewness and kurtosis tend to infinity at the same time.

\begin{figure}[htb]
\centering
\subfigure[Mean]{
\includegraphics[width=3.8cm]{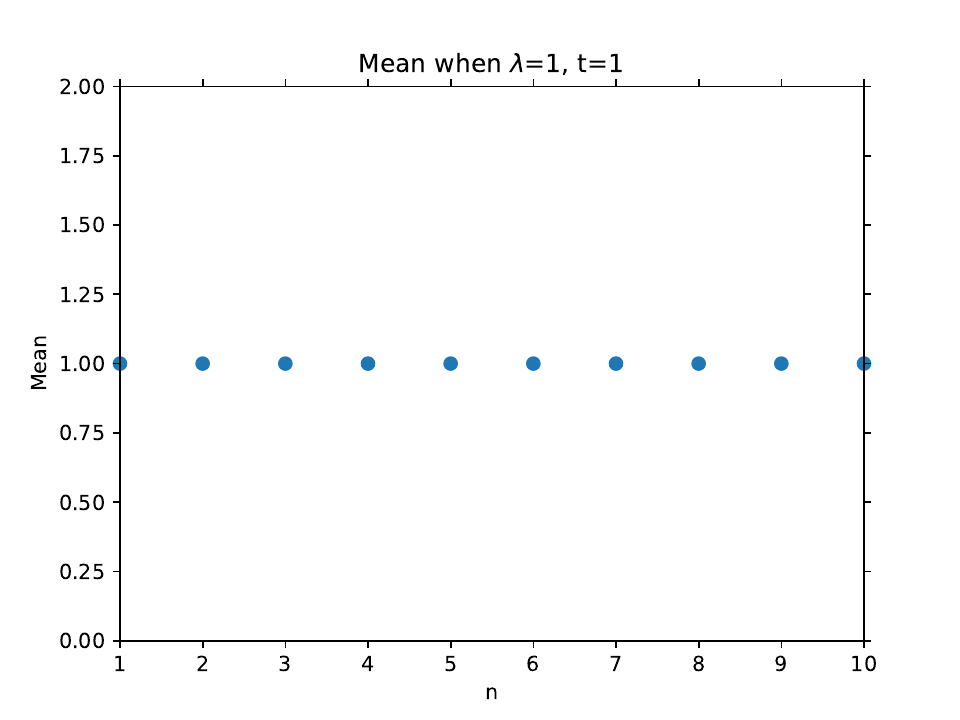}
}
\;
\subfigure[Variance]{
\includegraphics[width=3.8cm]{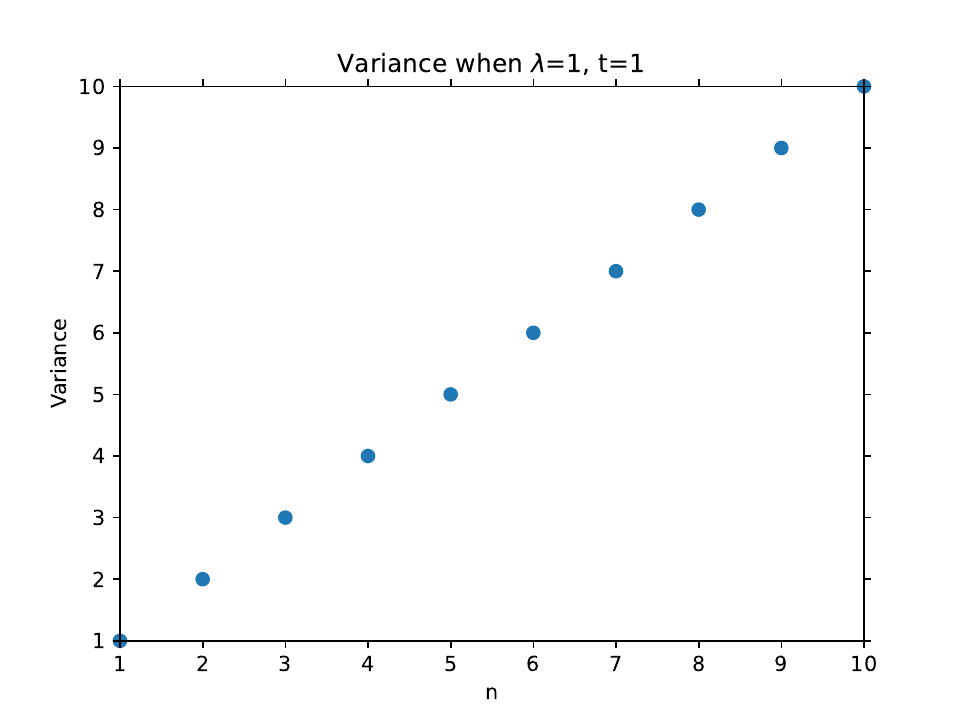}
}
\;
\subfigure[Skewness]{
\includegraphics[width=3.8cm]{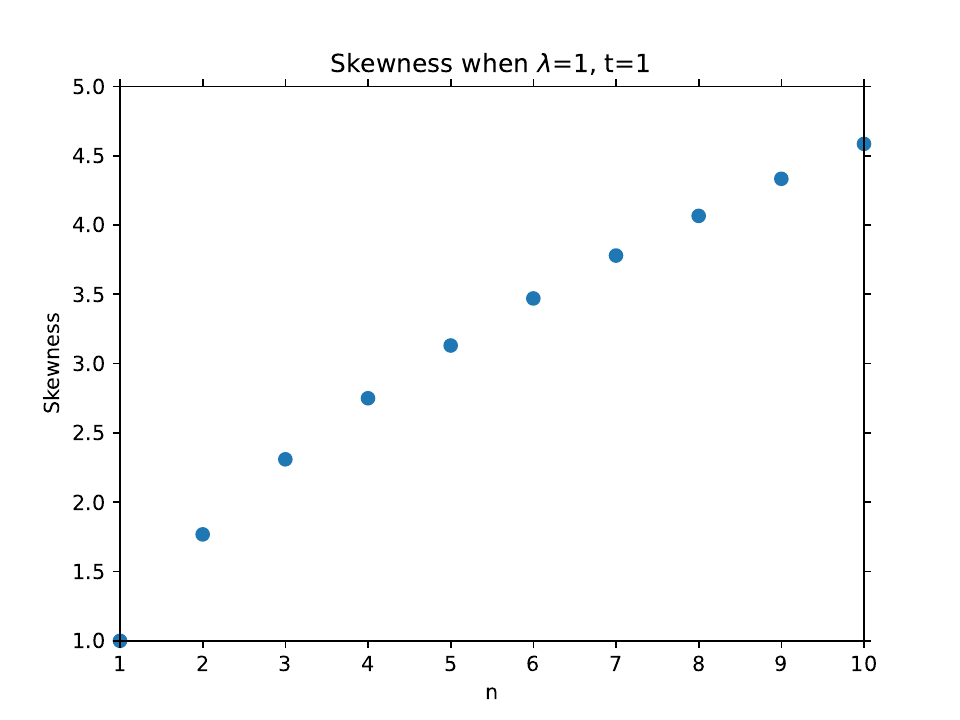}
}
\;
\subfigure[Kurtosis]{
\includegraphics[width=3.8cm]{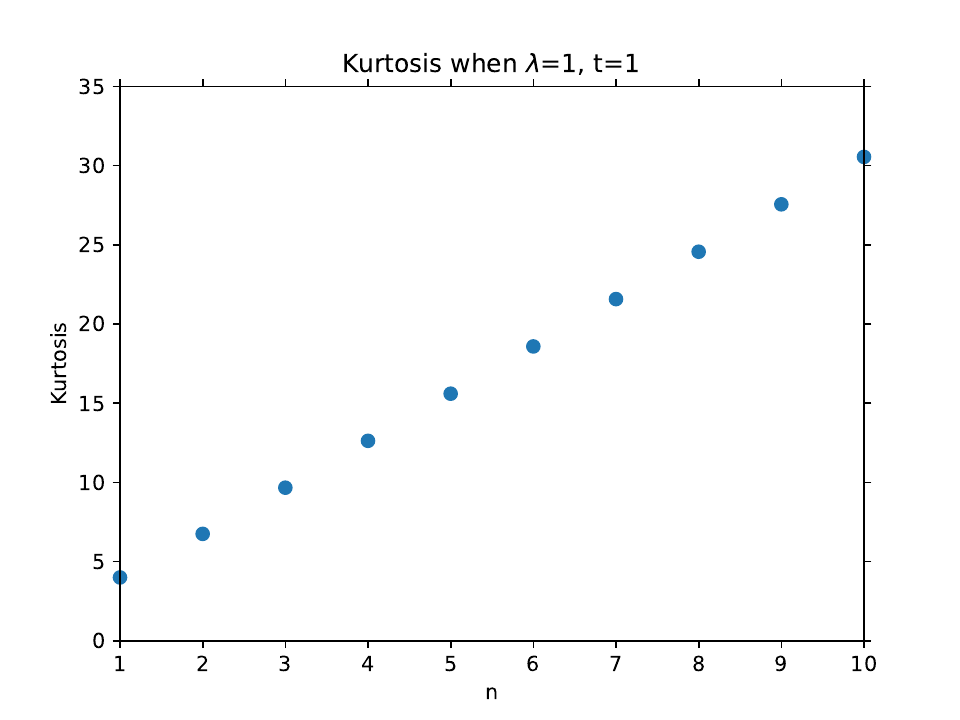}
}
\caption{When $\lambda=1$ the moments of MIPP w.r.t. $n$ from 1 to 10}
\end{figure}
In the $\lambda=1$ case, the mean is constant, and the variance and kurtosis seem have linear relations with $n$, the skewness also increases when $n$ increases, but the rate is slower than linear.

\begin{figure}[htb]
\centering
\subfigure[Mean]{
\includegraphics[width=3.8cm]{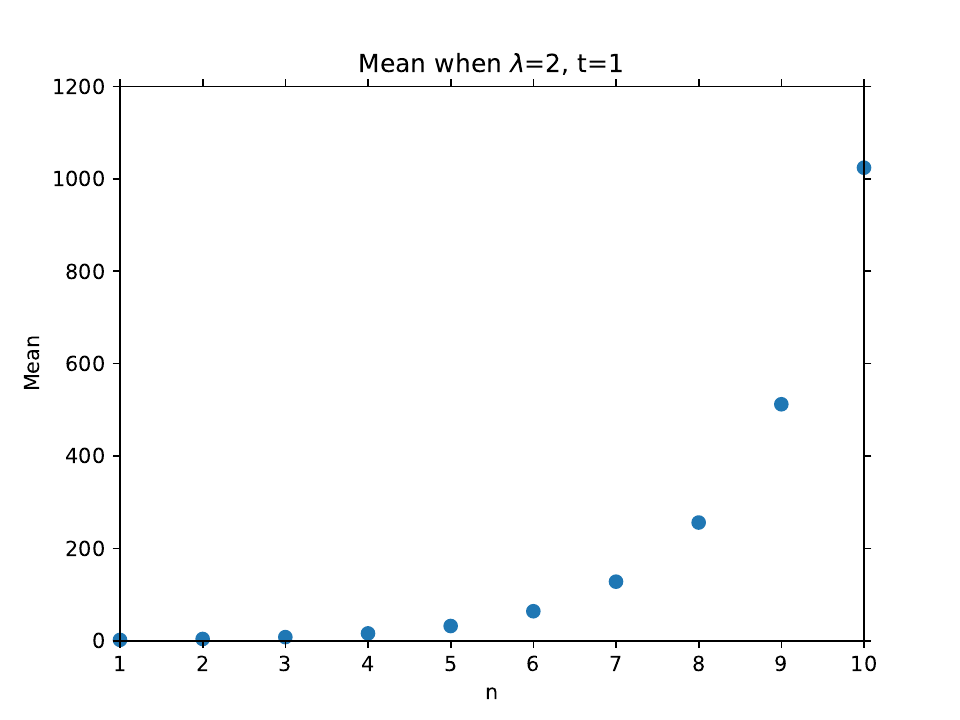}
}
\quad
\subfigure[Variance]{
\includegraphics[width=3.8cm]{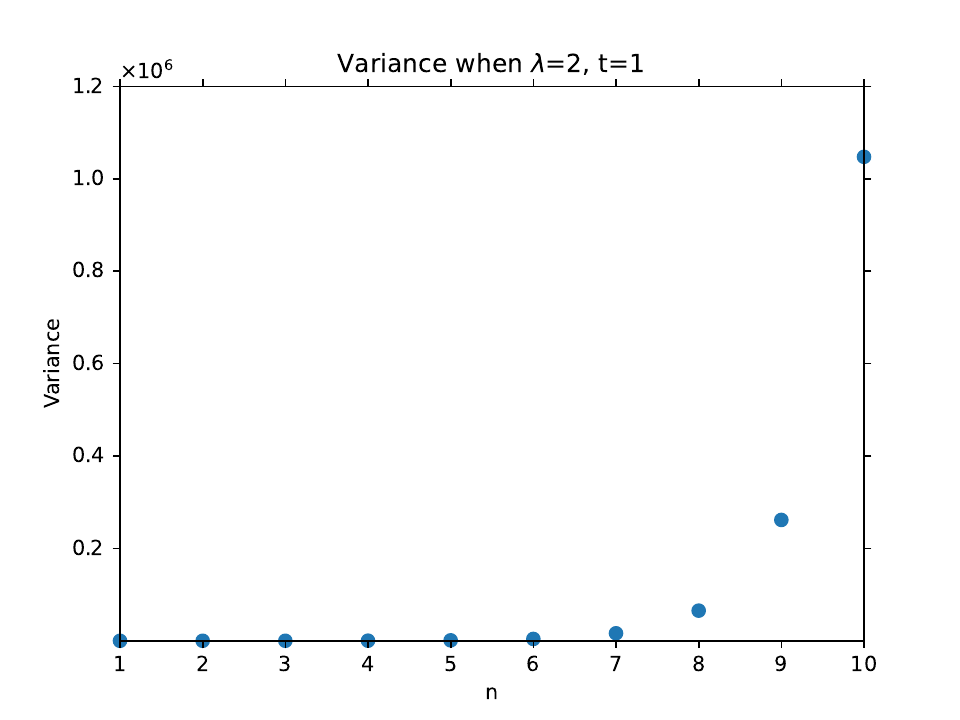}
}
\quad
\subfigure[Skewness]{
\includegraphics[width=3.8cm]{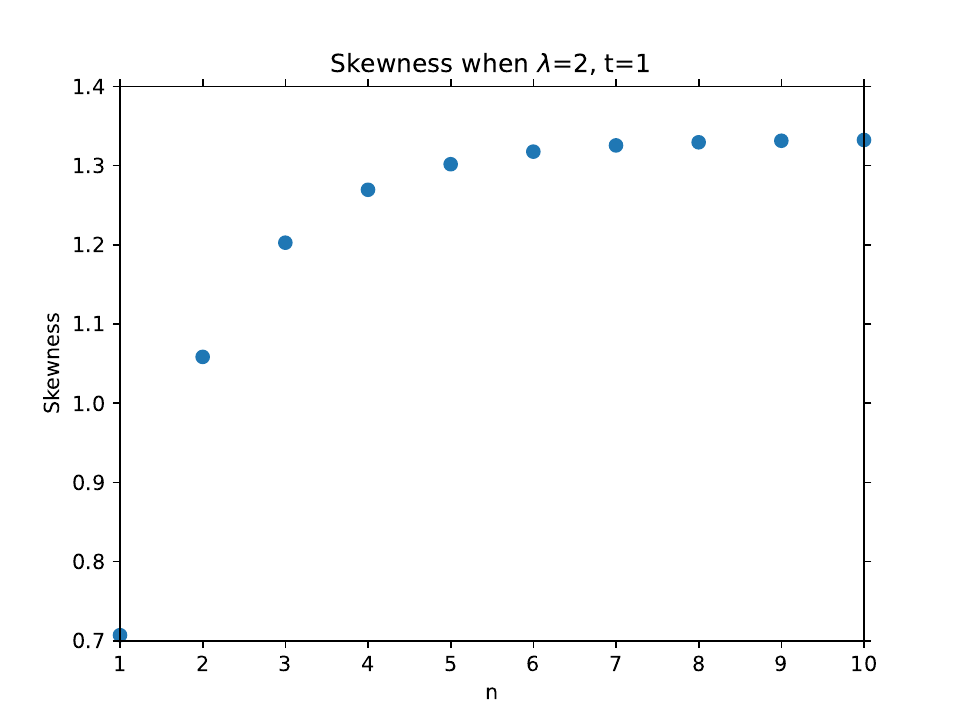}
}
\quad
\subfigure[Kurtosis]{
\includegraphics[width=3.8cm]{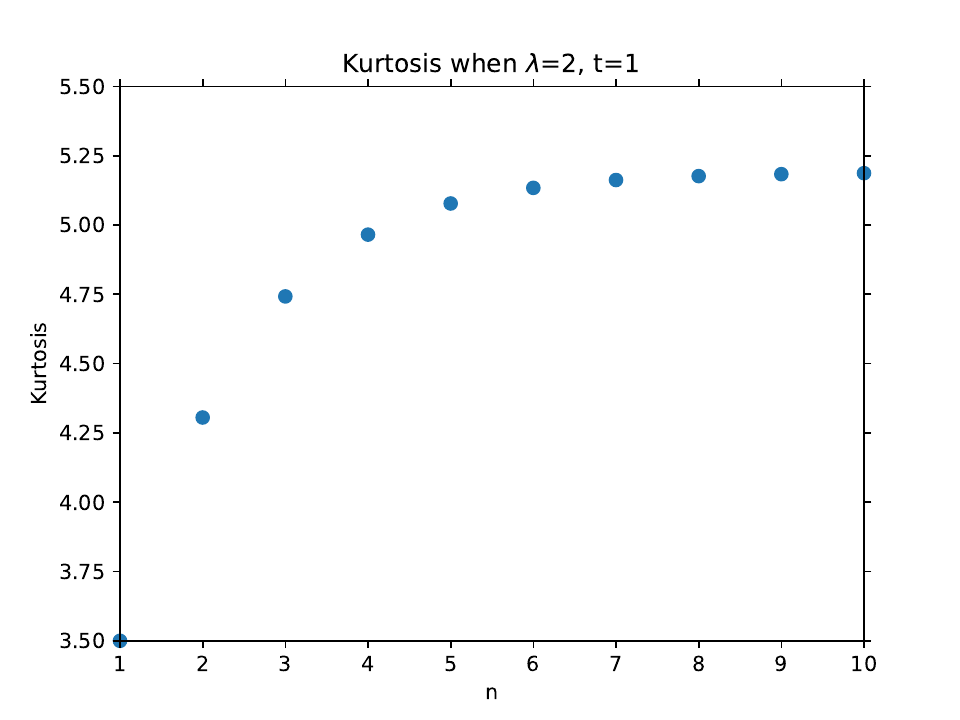}
}
\caption{When $\lambda=2$ the moments of MIPP w.r.t. $n$ from 1 to 10}
\end{figure}
In the $\lambda=2$ case, the mean and variance increase very fast when $n$ increases, but the skewness and kurtosis tends to constant when $n$ increase.

\newpage 

The following two propositions from \cite{Hu-2026} are crucial for developing the results presented in this paper.

\begin{proposition}\label{sojorn} For each fixed $n\geq 2$, the sojourn times of $V_t^{(n)}$ are i.i.d. exponential random variables. More specifically,
\[
S_k^{(n)}\sim \mathrm{EXP}(\lambda q_{n-1}), \forall k\geq 1, 
\]
where $q_{n-1}=P(V_1^{(n-1)}>0)$. The sequence $q_j, j\geq 1,$ satisfy  the  recursive relation
\[
q_j=1-e^{-\lambda q_{j-1}}, \; j\geq 2,
\]
with
\[
q_1=1-e^{-\lambda}.
\]
\end{proposition}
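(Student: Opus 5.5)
The plan is to use the representation (\ref{kVn}), $V_t^{(n)}=V^{(2,n)}_{N_t^1}$. This exhibits $V^{(n)}$ as a compound Poisson process driven by the outer clock $N^1$. I will then simply thin out the zero-size jumps.

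\textbf{Step 1: compound Poisson form.} Let $0<T_1<T_2<\cdots$ be the jump times of $N^1$. Then
\[
V^{(n)}_t=\sum_{i=1}^{N^1_t} Y_i,\qquad Y_i:=V^{(2,n)}_{i}-V^{(2,n)}_{i-1}.
\]
Because $V^{(2,n)}$ is a L\'evy process (an iterated subordination of independent Poisson processes), the increments $Y_i$ are i.i.d. Each has the law of $V^{(2,n)}_1$, which equals the law of $V^{(n-1)}_1$. The $Y_i$ are also independent of $N^1$, since $N^1$ is independent of $N^2,\dots,N^n$.

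\textbf{Step 2: thinning.} The process $V^{(n)}$ jumps at $T_i$ exactly when $Y_i>0$, which happens with probability $q_{n-1}=P(V^{(n-1)}_1>0)$, independently over $i$. By the thinning theorem for Poisson processes, the times $\{T_i : Y_i>0\}$ form a Poisson process of rate $\lambda q_{n-1}$. Its interarrival times are therefore i.i.d. $\mathrm{EXP}(\lambda q_{n-1})$, and these are exactly the sojourn times $S^{(n)}_k$.

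An equivalent route, if one wants to avoid the thinning theorem, uses the geometric count. Each interarrival of jumps of $V^{(n)}$ is a sum of $G$ i.i.d. $\mathrm{EXP}(\lambda)$ variables, with $G$ geometric with parameter $q_{n-1}$ and independent of them. Computing the Laplace transform gives $\frac{q_{n-1}\lambda/(\lambda+s)}{1-(1-q_{n-1})\lambda/(\lambda+s)}=\frac{\lambda q_{n-1}}{\lambda q_{n-1}+s}$. Independence of successive sojourns follows from the strong Markov property of $N^1$ at its jump times, together with the independence of the blocks of $Y_i$.

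\textbf{Step 3: the recursion.} From the same representation,
\[
P(V^{(j)}_1=0)=P\Bigl(\textstyle\sum_{i=1}^{N^1_1}Y_i=0\Bigr)=E\bigl[(1-q_{j-1})^{N^1_1}\bigr]=e^{-\lambda q_{j-1}}.
\]
Hence $q_j=1-e^{-\lambda q_{j-1}}$. For the base case, $q_1=P(N_1>0)=1-e^{-\lambda}$. As a consistency check, setting $\theta\to-\infty$ in (\ref{ell}) gives the same recursion.

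\textbf{Main obstacle.} The only delicate point is the independence of successive sojourn times. This is where one needs the i.i.d. structure of the $Y_i$, and hence the Lévy property of $V^{(2,n)}$ with its independence from $N^1$. Once that is in place, everything reduces to standard marking and thinning of a Poisson process.
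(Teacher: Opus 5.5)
Your proof is correct. The paper gives no proof of this proposition; it is quoted from \cite{Hu-2026}, so there is no in-paper argument to match. Your route is nonetheless the natural one within the paper's own framework. Writing $V^{(n)}_t=\sum_{i\le N^1_t}Y_i$ with i.i.d.\ marks $Y_i\overset{d}{=}V^{(n-1)}_1$ independent of $N^1$ is the same structure the paper uses in three places:
\begin{itemize}
\item the identity $Ee^{\theta V^{(2,n)}_k}=(Ee^{\theta V^{(n-1)}_1})^k$ in the derivation of (\ref{ell});
\item the reduction $X^{(n)}_t\overset{d}{=}x+ct-\sum_{i\le N_t}\eta_i$ in the proof of Lemma~\ref{le1};
\item the split, in the remark following Theorem~\ref{th1}, into jumps of $N$ with $V^{(n-1)}_1=0$ versus $V^{(n-1)}_1\ge 1$.
\end{itemize}
Your thinning step is that split turned into a theorem. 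The one input you assert without proof, that $V^{(2,n)}$ has i.i.d.\ unit increments, follows by induction from Bochner subordination (a L\'evy process time-changed by an independent subordinator is L\'evy). The paper also takes this for granted when it calls $V^{(n)}$ a subordinator.

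Your marking argument gives more than the statement. The marks carried by the retained points are i.i.d.\ with law $P(Y_1\in\cdot\mid Y_1>0)$ and independent of the thinned Poisson clock. That is exactly Proposition~\ref{jjj}. It also gives the independence between the first jump time and the first jump size, which the paper uses tacitly when it conditions on the first jump in the proof of Theorem~\ref{th1}.

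Your recursion for $q_j$, whether via $E[(1-q_{j-1})^{N^1_1}]$ or via $\theta\to-\infty$ in (\ref{ell}), is also correct. It is consistent with the $k=0$ case of (\ref{71}), since $E[e^{-\lambda V^{(n-1)}_1}]=e^{\ell_{n-1}(-\lambda)}=e^{-\lambda q_{n-1}}$ by induction on (\ref{ell}).
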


\begin{proposition}\label{jjj} For each integer $k\geq 1$ and for any $n\geq 2$ 
\begin{equation}\label{jtimes}
P(V^{(n)}_{J_1^{(n)}}=k)=\frac{P(V_1^{(n-1)}=k)}{P(V_1^{(n-1)}\geq 1)},
\end{equation}
where $J_1^{(n)}$ is the first jump time of $V_t^{(n)}$.
\end{proposition}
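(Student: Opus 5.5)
The plan is to deduce Proposition \ref{jjj} from the Lévy--Itô picture of the subordinator $V^{(n)}$ and the representation $V^{(n)}_t=V^{(2,n)}_{N^1_t}$ in (\ref{kVn}). By that representation, $V^{(n)}$ changes only at the jump times of the outer Poisson clock $N^1$. If $T_1<T_2<\dots$ are the jump times of $N^1$, then on $[T_i,T_{i+1})$ we have $V^{(n)}_t=V^{(2,n)}_i$. So the increment of $V^{(n)}$ at $T_i$ is
\[
\Delta_i:=V^{(2,n)}_i-V^{(2,n)}_{i-1}.
\]
Since $V^{(2,n)}$ is a Lévy process with integer time steps, the $\Delta_i$ are i.i.d. with $\Delta_i\overset{d}{=}V^{(2,n)}_1\overset{d}{=}V^{(n-1)}_1$. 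They are also independent of $N^1$, because $N^1$ is independent of $N^2,\dots,N^n$.

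The first jump time $J^{(n)}_1$ of $V^{(n)}$ is $T_\sigma$, where $\sigma=\inf\{i\ge1:\Delta_i>0\}$. This stopping index is a.s. finite, since $q_{n-1}=P(V^{(n-1)}_1>0)>0$. Because $V^{(n)}_{T_i}=0$ for $i<\sigma$, we get
\[
V^{(n)}_{J^{(n)}_1}=\Delta_\sigma .
\]
The core step is to identify the law of $\Delta_\sigma$: for an i.i.d. sequence, the first term satisfying $\Delta_i\ge1$ has the law of $\Delta_1$ conditioned on $\{\Delta_1\ge1\}$. I would verify this by decomposing on $\{\sigma=i\}$ and using independence:
\[
P(\Delta_\sigma=k)=\sum_{i\ge1}P(\Delta_1=0)^{i-1}P(\Delta_i=k)=\frac{P(\Delta_1=k)}{1-P(\Delta_1=0)},\qquad k\ge1 .
\]
Substituting $\Delta_1\overset{d}{=}V^{(n-1)}_1$ gives (\ref{jtimes}).

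As a by-product, $J^{(n)}_1=T_\sigma$ is a geometric sum of i.i.d. EXP$(\lambda)$ interarrival times, with success probability $q_{n-1}$. Hence $J^{(n)}_1\sim\mathrm{EXP}(\lambda q_{n-1})$, which is consistent with Proposition \ref{sojorn}.

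The step that needs the most care is the claim that the increments $\Delta_i$ are i.i.d. and independent of the clock $N^1$. I would justify it from the definition by induction on $n$: $V^{(2,n)}$ is built only from $N^2,\dots,N^n$, which are independent of $N^1$. A composition of independent Poisson processes has stationary independent increments, because $N^{l}$ evaluated over disjoint integer intervals of the inner process gives independent pieces.

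An alternative check is to use the Lévy measure. From (\ref{ell}), $V^{(n)}$ is compound Poisson with jump intensity $\lambda q_{n-1}$ and jump distribution $P(V^{(n-1)}_1=k)/q_{n-1}$ for $k\ge1$. Proposition \ref{jjj} then follows from the standard fact that the first jump of a compound Poisson process has the jump distribution.
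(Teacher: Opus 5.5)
The paper does not prove Proposition \ref{jjj}. It imports it, together with Proposition \ref{sojorn}, from \cite{Hu-2026}, so there is no in-paper argument to compare with. Your argument is correct and self-contained, and both of your routes work.

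Your thinning decomposition treats the outer clock $N^1$ as carrying i.i.d.\ batches $\Delta_i\overset{d}{=}V_1^{(n-1)}$, some of which are empty. This is the same picture the paper relies on later. It appears in the representation $X^{(n)}_t\overset{d}{=}x+ct-\sum_{i\le N_t}\eta_i$ in the proof of Lemma \ref{le1}. It appears again in the alternative derivation following Theorem \ref{th1}, where jumps of $N$ with $V_1^{(n-1)}=0$ are treated as non-events.

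What your route buys is that one elementary computation gives both the jump law (\ref{jtimes}) and the rate $\lambda q_{n-1}$ of Proposition \ref{sojorn}. The paper's dependence on the external reference for these two inputs could therefore be replaced by a few lines. Your L\'evy-measure check is shorter still: (\ref{ell}) reads $\ell_n(\theta)=\lambda\sum_{k\ge1}P(V_1^{(n-1)}=k)(e^{\theta k}-1)$, which identifies $V^{(n)}$ as compound Poisson directly. The price is invoking the fact that a L\'evy process is determined in law by its exponent.

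The one step to write out carefully is the one you flagged yourself. The intervals $(V^{(2,l-1)}_{i-1},V^{(2,l-1)}_i]$ are random, so first condition on the inner process, which is independent of $N^l$. Given it, the increments of $N^l$ over these disjoint intervals are independent Poisson variables with means $\lambda$ times the interval lengths. Then average, using the induction hypothesis on the inner process, to conclude that the unit-time increments of $V^{(2,l)}$ are i.i.d.
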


The properties reviewed in this section—most notably the exponential sojourn-time distribution, the recursive probability mass function, and the characteristic exponent—form the principal analytical toolkit for the ruin-theoretic investigation that follows. The fact that 
$V_t^{(n)}$ is a subordinator guarantees that the surplus process $X_t^{(n)}(x)$ belongs to the well-studied class of spectrally negative Lévy processes, enabling us to exploit powerful fluctuation-theoretic results in the ensuing sections. The recursive formulation of the characteristic exponent, $\ell_n(\theta)=\lambda e^{\ell_{n-1}(\theta)}-\lambda$, will be crucial for obtaining the Laplace transforms of the survival probability, while the distribution of the sojourn times underlies the conditioning arguments used to derive the associated integro-differential equations. In addition, the graphical representations of the probability mass function and the moments already suggest the presence of a phase transition in the ruin probability, which will be rigorously confirmed in Theorem 3.3. Equipped with these preparatory results, we are now ready to tackle the central problem of our study: characterizing the ruin probability and examining how it depends on the hierarchical structure of the MIPP.

\section{The  probability of ruin}
Having established the essential properties of the Multiply Iterated Poisson Process (MIPP) in the previous section, we now turn to the main focus of this work: the ruin probability in a surplus process driven by a MIPP. This section provides the theoretical backbone of our study by identifying the core conditions that determine whether ruin is unavoidable or can be prevented. We first present a key threshold theorem that generalizes the classical net profit condition to the hierarchical claim arrival framework, showing that the iteration depth $n$ effectively scales the claim intensity by a factor of $\lambda^n$. In particular, we show that if the premium rate $c$ is at most $\lambda^n m$, then ruin occurs with probability one, irrespective of the initial surplus. Conversely, when the net profit condition $c > \lambda^n m$ is satisfied, ruin is no longer certain, and the survival probability approaches one as the initial capital becomes large. Building on this fundamental dichotomy, we then investigate the limiting behavior of the ruin probability as the number of iterations $n$ tends to infinity. This limit analysis is both theoretically and practically significant, as it clarifies how the cascading claim structure influences the insurer’s long-term solvency. Remarkably, we identify a phase transition in the asymptotic regime: the ruin probability converges to $1$ if $\lambda>1$, to $m/c$ if $\lambda=1$, and to $0$ if $\lambda<1$. This threefold classification not only extends classical ruin theory but also underscores the crucial influence of the intensity parameter on the insurer’s ultimate fate in the presence of hierarchical claim clustering. We complement the theoretical developments with numerical illustrations, which visually confirm the phase transition and provide initial insights into the impact of the iteration level on the likelihood of ruin. For further discussions on ruin probabilities, see \cite{Ramsay2003}, \cite{Palmowski2025}, \cite{Gerber1979}, \cite{Constantinescu2018}, \cite{Grahovac2018}, \cite{Usabel2001}.

Recall from the introduction that we have denoted the ruin probability by $\Pi_n$. The associated survival probability is then given by $\Lambda_n(x)=1-\Pi_n(x)$. As shown in \cite{Hu-2026}, the first jump time $J^{(n)}_1$ of the process $V_t^{(n)}$ follows an exponential distribution with mean $1/(\lambda q_{n-1})$, where $q_{n-1}=P(V_1^{(n-1)}>0)$. Using this property, we first establish the following elementary lemma.

\begin{lemma}\label{le1}
i) If $c\le\lambda^n m$, then $\Pi_n(x)=1$ for all $x>0$.

ii) If $c>\lambda^n m$, then $\lim_{x\to+\infty}\Pi_n(x)=0$.
\end{lemma}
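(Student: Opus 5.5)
The plan is to reduce the statement to the classical fluctuation dichotomy for a random walk embedded at the jump times of $V^{(n)}$, which is a compound Poisson process. By Proposition \ref{sojorn}, the jump times $J_k^{(n)}$ of $V_t^{(n)}$ form a Poisson process with rate $\lambda q_{n-1}$. Because $V^{(n)}$ is a Lévy process, the jump sizes $B_k=V^{(n)}_{J_k^{(n)}}-V^{(n)}_{J_{k-1}^{(n)}}$ are i.i.d. and independent of the sojourn times. By Proposition \ref{jjj} they have law $P(B_k=j)=P(V_1^{(n-1)}=j)/q_{n-1}$ for $j\ge 1$.

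Ruin can only happen at claim instants, so $\Pi_n(x)=P(\inf_k (x+\sum_{i\le k} Y_i)<0)$, where
\[
Y_i=cS_i^{(n)}-\sum_{j\in\text{batch } i}\xi_j
\]
are i.i.d. For the mean, Wald's identity gives $E\sum_{j\in\text{batch}}\xi_j=m\,EB_1=m\,EV_1^{(n-1)}/q_{n-1}$. Iterating \eqref{ell} (differentiate at $\theta=0$: $\ell_n'(0)=\lambda\ell_{n-1}'(0)$, with $\ell_1'(0)=\lambda$) gives $EV_1^{(n-1)}=\lambda^{n-1}$. Combining this with $ES^{(n)}=1/(\lambda q_{n-1})$ yields
\[
EY_1=\frac{c-\lambda^n m}{\lambda q_{n-1}}.
\]
Equivalently, $E X_t^{(n)}=x+(c-\lambda^n m)t$.

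For part i), if $c<\lambda^n m$ then $EY_1<0$. By the strong law of large numbers $\sum_{i\le k}Y_i\to-\infty$ a.s., so ruin is certain. The boundary case $c=\lambda^n m$ is the main obstacle. Here $EY_1=0$ and $Y_1$ is nondegenerate, since $\xi>0$ and the sojourn time is exponential. The Chung–Fuchs theorem (or the oscillation theorem for mean-zero random walks) then gives $\liminf_k \sum_{i\le k}Y_i=-\infty$ a.s., which needs only $E|Y_1|<\infty$. Integrability follows from $E\xi<\infty$ and $EB_1<\infty$. Hence $\Pi_n(x)=1$ for every $x>0$.

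For part ii), $EY_1>0$, so the strong law gives $\sum_{i\le k}Y_i\to+\infty$ a.s. The random walk therefore has a finite overall infimum $M=\inf_{k\ge0}\sum_{i\le k}Y_i>-\infty$ a.s. Then
\[
\Pi_n(x)=P(M<-x)\to 0 \quad\text{as } x\to\infty,
\]
by continuity of probability, because $\{M<-x\}$ decreases to the null event $\{M=-\infty\}$. An alternative route is to view $X^{(n)}$ directly as a spectrally negative Lévy process and cite the standard drift criterion, but the embedded random walk keeps the argument self-contained using Propositions \ref{sojorn} and \ref{jjj}.
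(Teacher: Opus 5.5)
Your proof is correct and is essentially the paper's argument: both reduce $X^{(n)}$ to an embedded random walk and apply the random-walk trichotomy (drift to $-\infty$, oscillation, or drift to $+\infty$, according to the sign of the mean step). The paper gets this trichotomy by citing Theorem 6.3.1 of Rolski et al., while you derive it from the strong law and Chung--Fuchs. The other difference is cosmetic: you embed at the actual jump times of $V^{(n)}$ (rate $\lambda q_{n-1}$, batch law from Proposition \ref{jjj}), whereas the paper embeds at the jump times of the outer clock $N^1$ (rate $\lambda$, batches distributed as $V_1^{(n-1)}$, possibly empty), so its step mean $\lambda^{n-1}m-c/\lambda$ equals $-q_{n-1}EY_1$ and gives the same sign criterion.
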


\begin{proof}
First observe that the model (\ref{one1}) can be expressed as
\begin{equation*}
\begin{split}
 X^{(n)}_t(x)\overset{d}{=}x+ct-\sum_{i=0}^{N_t}\eta_i,
 \end{split}
\end{equation*}
where $\eta_i\overset{d}{=}\sum_{j=0}^{V_1^{(n-1)}}\xi_j$. Define $\bar{\eta}_i=\eta_i-c(\tau_i-\tau_{i-1})$ and $\tilde{\eta}_n=\sum_{i=1}^n\bar{\eta_i}$, where $\tau_i$ denotes the $i$-th jump time of $N_t$ .
Note that $E[\bar{\eta}_i]=\lambda^{n-1}m-c/\lambda$. It is straightforward to verify that the sequence $\{\bar{\eta}_i\}$ satisfies the hypotheses of Theorem 6.3.1 in \cite{rolski1999}. Let
\[
M_n=\sup_{t\ge0}\Big(\sum_{i=0}^{N_t}\sum_{j=0}^{V_1^{(n-1)}}\xi_j-ct\Big).
\]
and observe that $M_n=\sup_{k\ge1}\tilde{\eta}_k$. Thus
\begin{equation}\label{psi}
\begin{split}
 \Pi_n(x)=P(M_n>x).
\end{split}
\end{equation}

If $c\le\lambda^nm$, then $E[\bar{\eta}_i]\ge0$. By Theorem 6.3.1 in \cite{rolski1999}, this implies $P(\limsup_{k\to\infty}\tilde{\eta}_k=+\infty)=1$, and therefore, using (\ref{psi}), we obtain $\Pi_n(x)=1$.

On the other hand, if $c>\lambda^nm$, then $E[\bar{\eta}_i]<0$, and by Theorem 6.3.1 in \cite{rolski1999} we have $P(\lim_{k\to\infty}\tilde{\eta}_k=-\infty)=1$. Combining this with (\ref{psi}), we conclude that $\lim_{x\to\infty}\Pi_n(x)=0$.
\end{proof}

Next, we investigate the behavior of the ruin probability $\Pi_n(x)$ as $n \rightarrow \infty$ for each fixed $x>0$. Set $f(x)=e^{-1+x}$ and $g(x)=e^{-\lambda+\lambda x}$. The intensity parameter $\lambda$ of the Poisson process $N_t$ is held fixed throughout the paper, so we use the notation $g(x)$ as defined above. It is evident that $f=g$ in the special case $\lambda=1$. The $n-$ folded compositions of $f$ and $g$ are denoted by $f_n^{\circ}$ and $g_n^{\circ}$ respectively.

We can readily verify that the Laplace exponent of $X_t^{(n)}(0)$ takes the form
\begin{equation}\label{scale-9}
\begin{split}
 \phi_n(\theta)=c\theta-\lambda+\lambda f_{n-1}^{\circ}(E[e^{-\xi\theta}]),
\end{split}
\end{equation}
and that the corresponding ruin probability is characterized by
\begin{equation}\label{ruin-10}
\begin{split}
 \Pi_n(x)=1-\phi_n'(0+)W_n(x),
\end{split}
\end{equation}
where $W_n(x)$ denotes the scale function associated with the process $X_t^{(n)}(x)$, see Theorem 3.2 in Chapter XI of \cite{asmussen2010} for this. We first prove the following Lemma.

\begin{lemma}\label{lem0512}
i) For the function $f(x)=e^{-1+x}$, we obtain
\begin{equation*}
\begin{split}
 \lim_{n\to\infty} f_{n}^{\circ}(0)=1,
\end{split}
\end{equation*}
where $f_n^{\circ}(\cdot)$ denotes the $n$-fold self-composition of $f$.

ii) If $\lambda<1$, we have $g_{n}^{\circ}(x)\geq f_n^{\circ}(x)$ when $x\in [0, 1]$ for all $n\geq 1$.
\end{lemma}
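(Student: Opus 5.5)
For part i), I would study the orbit $a_n = f_n^{\circ}(0)$, which satisfies $a_0=0$ and $a_{n+1}=f(a_n)=e^{a_n-1}$. The plan is to show that this sequence is monotone increasing, bounded above by $1$, and that its limit must equal $1$.

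Since $f$ is increasing, monotonicity follows by induction from $a_1=e^{-1}>0=a_0$. Boundedness follows from $f(1)=1$: if $a_n\le 1$, then $a_{n+1}=f(a_n)\le f(1)=1$. Hence $a_n\uparrow L\le 1$. Continuity of $f$ gives $L=e^{L-1}$. The function $h(y)=e^{y-1}-y$ has $h'(y)=e^{y-1}-1<0$ on $[0,1)$ and $h(1)=0$, so $h>0$ on $[0,1)$. Therefore the only fixed point in $[0,1]$ is $L=1$, which proves i). Equivalently, one can use convexity: $e^{y-1}\ge y$, with equality only at $y=1$.

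For part ii), I would compare $g$ and $f$ on $[0,1]$ when $\lambda<1$. Writing $g(x)=e^{-\lambda(1-x)}$ and $f(x)=e^{-(1-x)}$, we have $1-x\ge 0$ and $\lambda<1$, so $-\lambda(1-x)\ge -(1-x)$. Hence $g(x)\ge f(x)$ on $[0,1]$. Both maps are increasing and send $[0,1]$ into $[0,1]$, since $g(0)=e^{-\lambda}>0$, $g(1)=1$, and similarly for $f$.

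Now I would induct on $n$, assuming $g_n^{\circ}(x)\ge f_n^{\circ}(x)$ with both values in $[0,1]$. Then
\[
g_{n+1}^{\circ}(x)=g\bigl(g_n^{\circ}(x)\bigr)\ge g\bigl(f_n^{\circ}(x)\bigr)\ge f\bigl(f_n^{\circ}(x)\bigr)=f_{n+1}^{\circ}(x).
\]
The first inequality uses that $g$ is monotone, and the second uses the pointwise comparison, which is valid because $f_n^{\circ}(x)\in[0,1]$. The base case $n=1$ is exactly the pointwise comparison.

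The only point needing care is keeping the iterates inside $[0,1]$, since the pointwise comparison holds only there. This is handled by invariance of $[0,1]$ under both maps, which follows from monotonicity together with $f(1)=g(1)=1$ and positivity at $0$. Nothing else is delicate.
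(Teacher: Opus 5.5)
Your proof is correct. Part ii) is the paper's own induction except for the order of the two inequalities. The paper applies the pointwise bound $g\ge f$ at the point $g_k^{\circ}(x)$ and then uses monotonicity of $f$. You use monotonicity of $g$ first and then the pointwise bound at $f_n^{\circ}(x)$. Either way, only invariance of $[0,1]$ is needed.

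Part i) takes a genuinely different route. The paper proves by induction that $f_n^{\circ}(0)=e^{-1}\,\mathrm{E}_{i=0}^{n-2}e^{e^{-1}}$, i.e.\ $e^{-1}$ times a power tower of height $n-1$ with base $e^{1/e}$. It then cites Barrow (1936) for the fact that this tower tends to $e$. You instead show directly that the orbit of $0$ increases and stays below the fixed point $1$. It must therefore converge to a fixed point of $f$ in $[0,1]$, which is unique because $e^{y-1}>y$ for $y<1$.

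The paper's route gives a closed form for each iterate and a link to the continued-exponential literature. However, the substitution $y=eu$ conjugates $f$ to $y\mapsto e^{y/e}$, so the cited tower limit is just a rescaled restatement of the claim. The actual convergence argument is therefore outsourced.

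Your argument is self-contained and also shows the convergence is monotone, $f_n^{\circ}(0)\uparrow 1$. It also carries over to $g$ with only a change in the fixed-point check. For $0<\lambda\le 1$, convexity gives $g(y)\ge 1+\lambda(y-1)\ge y$ on $[0,1]$, with equality only at $y=1$, so $g_n^{\circ}(0)\uparrow 1$ directly. This is the classical extinction argument for a Galton--Watson process with Poisson$(\lambda)$ offspring, whose generating function is $g$. It would make the comparison in part ii) unnecessary for its use in Theorem \ref{th0514}.
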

\begin{proof} i) 
We first claim that
\begin{equation}\label{05121}
\begin{split}
 f_n^{\circ}(0)=e^{-1}\mathrm{E}_{i=0}^{n-2}e^{e^{-1}},
\end{split}
\end{equation}
where $\mathrm{E}_{i=0}^{n-2}\cdot$ is the continued exponential introduced in \cite{Barrow-1936} and is given by
\begin{equation*}
\begin{split}
 \mathrm{E}_{i=0}^{n-2}a_i=a_0^{a_1^{.^{.^{.^{a_{n-2}}}}}}.
\end{split}
\end{equation*}
We prove this claim by induction. First, when $n=1$, we have
\begin{equation*}
\begin{split}
 f_{1}^{\circ}(0)=f(0)=e^{-1},
\end{split}
\end{equation*}
and therefore (\ref{05121}) is satisfied. When $n=2$, we have
\begin{equation*}
\begin{split}
 f_{2}^{\circ}(0)=e^{-1+e^{-1}}=e^{-1}e^{e^{-1}}=e^{-1}\mathrm{E}_{i=0}^{0}e^{e^{-1}}.
\end{split}
\end{equation*}
Now suppose the relation (\ref{05121})  holds for all $n\le k$ for some positive integer $k$, then 
\begin{equation*}
\begin{split}
 f_{k+1}^{\circ}(0)=e^{-1+f_k^{\circ}(0)}=e^{-1}e^{e^{-1}\mathrm{E}_{i=0}^{k-2}e^{e^{-1}}}=e^{-1}\mathrm{E}_{i=0}^{k-1}e^{e^{-1}},
\end{split}
\end{equation*}
which verifies the claim for $k+1$. We conclude that (\ref{05121}) holds. Next, according to \cite{Barrow-1936}, we have $\lim_{n\to\infty}\mathrm{E}_{i=0}^{n}e^{e^{-1}}=e$. By plugging this into (\ref{05121}), we obtain
\begin{equation*}
\begin{split}
 \lim_{n\to\infty}f_n^{\circ}(0)=e^{-1}\lim_{n\to\infty}\mathrm{E}_{i=0}^{n-2}e^{e^{-1}}=e^{-1}e=1.
\end{split}
\end{equation*}
This completes the proof of i).

ii) First note that $0\le f_n^{\circ}(x)\le 1, 0\le g_n^{\circ}(x)\le 1 $ on $[0, 1]$ for all $n\geq 1$ and $\lambda<1$. We prove the claim in ii) by induction. When $n=1$ we obviously have $e^{-\lambda+\lambda x}\ge e^{-1+x}$. Suppose the claim holds for all $n\le k$. When $n=k+1$ we have 
\begin{equation}\label{compare}
\begin{split}
 g^{\circ}_{k+1}(x)=e^{-\lambda+\lambda g_{k}^{\circ}(x)}\ge e^{-1+g_{k}^{\circ}(x)}\ge e^{-1+f_{k}^{\circ}(x)}=f^{\circ}_{k+1}(x),
\end{split}
\end{equation}
which verifies the claim.
\end{proof}

We use this Lemma in the proof of the following Theorem.
\begin{theorem}\label{th0514}
Assume $c\ge E[\xi]$, then we have
\begin{equation*}
\lim_{n\to\infty}\Pi_n(x)=\left\{
\begin{array}{ll}
0,\quad &\multirow{1}*{$\lambda<1$,}\\
\specialrule{0em}{1ex}{1ex}
\frac{E[\xi]}{c},\quad &\multirow{1}*{$\lambda=1$,}\\
\specialrule{0em}{1ex}{1ex}
1,\quad &\multirow{1}*{$\lambda>1$.}
\end{array}\right.
\end{equation*}
\end{theorem}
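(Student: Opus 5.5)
The plan is to split into the three regimes of $\lambda$ and use the representation $\Pi_n(x)=P(M_n>x)$ from the proof of Lemma \ref{le1}, together with monotonicity in $n$ of the aggregate claim process.

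\emph{Case $\lambda>1$.} Since $c$ is fixed and $\lambda^n m\to\infty$, there is $n_0$ with $c\le \lambda^n m$ for all $n\ge n_0$. Lemma \ref{le1} i) then gives $\Pi_n(x)=1$ for all $n\ge n_0$, so the limit is $1$.

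\emph{Case $\lambda=1$.} Here $\lambda^n m=m$ for every $n$. If $c=m$, Lemma \ref{le1} i) gives $\Pi_n\equiv 1=m/c$. If $c>m$, I would write the Pollaczek–Khinchine type representation coming from the ladder-height structure. The process $X^{(n)}_t(0)$ is a compound Poisson process with drift $c$, jump rate $\lambda q_{n-1}$ (Proposition \ref{sojorn}), and batch claim sizes $\eta^{(n)}$. By Proposition \ref{jjj}, $\eta^{(n)}$ is distributed as $\sum_{j=1}^{K}\xi_j$, where $K\sim V_1^{(n-1)}$ conditioned on $K\ge1$. Classical theory (Theorem 6.3.1 and the Pollaczek–Khinchine formula in \cite{rolski1999}) gives
\[
\Pi_n(x)=\Bigl(1-\frac{m}{c}\Bigr)\sum_{k\ge1}\Bigl(\frac{m}{c}\Bigr)^k\overline{(F_{I}^{(n)})^{*k}}(x),
\]
since $\rho=\lambda q_{n-1}E\eta^{(n)}/c=\lambda^n m/c=m/c$. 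Here $F_I^{(n)}$ is the integrated tail distribution of $\eta^{(n)}$, with density $P(\eta^{(n)}>y)/E\eta^{(n)}$. It then suffices to show $\overline{F_I^{(n)}}(x)\to1$ for each fixed $x$, i.e. that the ladder heights escape to infinity. Now $E\eta^{(n)}=1\cdot m/q_{n-1}$, and $q_{n-1}\to0$ because $q_j=1-e^{-q_{j-1}}$ decreases to the fixed point $0$. Hence
\[
F_I^{(n)}(x)=\frac{\int_0^x P(\eta^{(n)}>y)\,dy}{E\eta^{(n)}}\le \frac{x q_{n-1}}{m}\to0.
\]
Each convolution power also has tail tending to $1$. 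Dominated convergence in $k$ then yields $\Pi_n(x)\to(1-m/c)\sum_{k\ge1}(m/c)^k=m/c$. (Lemma \ref{lem0512} i) is consistent with this: $f_n^\circ(0)\to1$ is exactly $1-q_n\to1$.)

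\emph{Case $\lambda<1$.} For large $n$ we have $c>\lambda^n m$, so the same Pollaczek–Khinchine formula applies with $\rho_n=\lambda^n m/c\to0$. Bounding every tail by $1$ gives
\[
\Pi_n(x)\le\sum_{k\ge1}\rho_n^k(1-\rho_n)=\rho_n\to0.
\]
Alternatively one can use $\Pi_n(x)\le\Pi_n(0)=\phi_n'(0+)$-related quantities via (\ref{ruin-10}), i.e. $\Pi_n(0)=\lambda^n m/c$. Lemma \ref{lem0512} ii) could be invoked to compare with the $\lambda=1$ iteration, but the direct bound suffices.

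The main obstacle is the $\lambda=1$ case. There one must rigorously justify the Pollaczek–Khinchine representation for the batch-claim process and verify that the integrated tail distribution has vanishing mass on $[0,x]$. The key input is $q_{n-1}\to0$ while the mean jump stays $m/q_{n-1}$, which makes the ladder heights blow up in distribution. The cases $\lambda\ne1$ are essentially immediate from Lemma \ref{le1} and $\Pi_n(x)\le\Pi_n(0)=\lambda^n m/c$.
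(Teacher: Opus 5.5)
Your proof is correct, and for $\lambda\le 1$ it takes a genuinely different route from the paper; the case $\lambda>1$ is the same (Lemma~\ref{le1} i)).

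\textbf{What the paper does.} For $\lambda=1$ and $\lambda<1$ it works with the scale-function identity (\ref{ruin-10}). It shows $\phi_n(\theta)\to c\theta$ for every $\theta>0$. The inputs are the continued-exponential limit $f_n^{\circ}(0)\to1$ of Lemma~\ref{lem0512} i), monotonicity of the iterates, and, for $\lambda<1$, the comparison $g_n^{\circ}\ge f_n^{\circ}$ of Lemma~\ref{lem0512} ii). From this it gets $\mathcal{L}_{W_n}(\theta)\to 1/(c\theta)$, infers $W_n(x)\to 1/c$, and reads off $\Pi_n(x)\to 1-\lim_n\phi_n'(0+)/c$.

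\textbf{What you do instead.} You apply Pollaczek--Khinchine to the thinned batch model: rate $\lambda q_{n-1}$, and batches $\sum_{j\le K}\xi_j$ with $K\sim V_1^{(n-1)}$ conditioned on $K\ge1$. This is exactly the reduction already used in Lemma~\ref{le1} and Theorem~\ref{th1}, so the step you flag as the main obstacle is routine. Your only analytic input is $q_{n-1}\to0$, which you get from the monotone recursion $q_j=1-e^{-q_{j-1}}$. This is the same fact as Lemma~\ref{lem0512} i), since $f_n^{\circ}(0)=1-q_n$ when $\lambda=1$, but without continued exponentials.

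\textbf{What your route buys.}
\begin{itemize}
\item It avoids passing from convergence of $\mathcal{L}_{W_n}$ to pointwise convergence of $W_n$. The paper takes that step for granted; it can be justified because $W_n$ is nondecreasing, but the paper does not do so.
\item It makes Lemma~\ref{lem0512} ii) unnecessary, since $\Pi_n(x)\le\Pi_n(0)=\lambda^nm/c$ settles $\lambda<1$ immediately.
\item It gives explicit rates. For $\lambda<1$ you have $\Pi_n(x)\le\lambda^nm/c$ uniformly in $x$. For $\lambda=1$, $c>m$, the bound $(F_I^{(n)})^{*k}(x)\le F_I^{(n)}(x)\le xq_{n-1}/m$ yields $0\le m/c-\Pi_n(x)\le xq_{n-1}/c$.
\item It explains the $\lambda=1$ limit probabilistically. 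Batches become rarer but have mean size $m/q_{n-1}\to\infty$, so the ladder heights, which occur with probability $m/c$ each time, overshoot any fixed level.
\end{itemize}

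\textbf{What the paper's route buys.} It stays inside the Laplace-exponent and scale-function framework used later in the paper, and it treats both regimes $\lambda\le1$ through the single computation $\phi_n(\theta)\to c\theta$.
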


\begin{proof}
Case one: When $\lambda>1$, the inequality $\lambda^n m>c$ holds for all sufficiently large $n$. Hence, by part i) of Lemma \ref{le1}, we obtain $\Pi_n(x)=1$ whenever $\lambda^n m>c$. Consequently, $\lim_{n\to\infty}\Pi_n(x)=1$.

Case two: when $\lambda = 1$, we have $\phi'_n(0+)=E X_1^{(n)}(0)=c-\lambda^n m=c-m$, so relation (\ref{ruin-10}) reduces to
\[
 \Pi_n(x)=1-(c-m)W_n(x).
\]
Recall that $\int_0^{\infty} e^{-\theta x} W_n(x)\,dx = \frac{1}{\phi_n(\theta)}$. We first analyze the limit of $\phi_n(\theta)$ as $n\to\infty$. Note that
\begin{equation}\label{psi_n}
\begin{split}
 \phi_n(\theta) = c\theta - 1 + f_{n-1}^{\circ}(E[e^{-\xi\theta}]).
\end{split}
\end{equation}
Thus it is enough to determine $\lim_{n\to\infty} f_{n-1}^{\circ}(E[e^{-\xi\theta}])$. Since $f(x)$ is increasing, each iterate $f_{n-1}^{\circ}(x)$ is also increasing in $x$ for all $n\ge1$. Hence, using $0\le E[e^{-\theta \xi}]\le 1$, we obtain
\begin{equation}\label{smaller}
\begin{split}
f_{n-1}^{\circ}(0) \le f_{n-1}^{\circ}(E[e^{-\xi\theta}]) \le f_{n-1}^{\circ}(1)
= f_{n-2}^{\circ}(1) = \cdots = f(1) = 1.
\end{split}
\end{equation}
Letting $n\to\infty$ in (\ref{smaller}) and applying Lemma \ref{lem0512}, we get
\begin{equation*}
\begin{split}
 \lim_{n\to\infty} f_{n-1}^{\circ}(E[e^{-\xi\theta}]) = 1.
\end{split}
\end{equation*}
Therefore, from (\ref{psi_n}) we deduce
\begin{equation*}
\begin{split}
 \lim_{n\to\infty} \phi_n(\theta) = c\theta.
\end{split}
\end{equation*}
Consequently,
\[
 \lim_{n\to\infty} \int_0^{\infty} e^{-\theta x} W_n(x)\,dx
 = \lim_{n\to\infty} \frac{1}{\phi_n(\theta)}
 = \frac{1}{c\theta},\qquad \theta>0.
\]
The inverse Laplace transform of $1/(c\theta)$ is the constant function $1/c$. Hence $\lim_{n\to\infty} W_n(x) = 1/c$. It follows that
\begin{equation*}
\begin{split}
 \lim_{n\to\infty} \Pi_n(x)
 &= 1 - \lim_{n\to\infty}(\phi_n'(0+) W_n(x)) \\
 &= 1 - (c - E[\xi])\frac{1}{c}
 = \frac{E[\xi]}{c}.
\end{split}
\end{equation*}

Case three: when $\lambda<1$, we obtain
\begin{equation*}
\begin{split}
 \lim_{n\to\infty}\phi'_n(0+)=\lim_{n\rightarrow \infty}EX_1^{(n)}(0)=\lim_{n\to\infty}(c-\lambda^nE[\xi])=c.
\end{split}
\end{equation*}
Moreover, recall that $\phi_n(\theta)=c\theta-\lambda+\lambda g_{n-1}^{\circ}(E[e^{-\xi\theta}])$. Hence
\begin{equation}\label{psi_n_lambda}
\begin{split}
\lim_{n\rightarrow \infty} \phi_n(\theta)=c\theta-\lambda+\lambda \lim_{n\rightarrow \infty}g_{n-1}^{\circ}(E[e^{-\xi\theta}]),
\end{split}
\end{equation}
provided that this limit exists. We now examine $\lim_{n\rightarrow \infty}g_{n-1}^{\circ}(E[e^{-\xi\theta}])$. First note that, for $\lambda<1$, the function $g_n^{\circ}(x)$ is increasing in $x$ for every $n\geq 1$. Consequently,
\begin{equation}\label{smaller_lambda}
\begin{split}
 g_{n-1}^{\circ}(E[e^{-\xi\theta}])\le g_{n-1}^{\circ}(1)=g_{n-2}^{\circ}(1)=\cdots=g(1)=1.
\end{split}
\end{equation}
On the other hand, by part ii) of Lemma \ref{lem0512}, we have
\[
g_n^{\circ}(Ee^{-\xi \theta})\geq f_n^{\circ}(Ee^{-\xi \theta})\geq f_n^{\circ}(0).
\]
The second inequality follows from the monotonicity of $f_n^{\circ}(x)$ together with $0\le Ee^{-\xi \theta}\le 1$. Hence
\[
f_n^{\circ}(0) \le g_n^{\circ}(Ee^{-\xi \theta})\le 1.
\]
Applying part i) of Lemma \ref{lem0512}, we deduce that the limit $\lim_{n\rightarrow \infty}g_{n-1}^{\circ}(E[e^{-\xi\theta}])=1$ exists. Substituting this into (\ref{psi_n_lambda}) yields $\lim_{n\rightarrow \infty}\psi_n(\theta)=c\theta$. Therefore, as in Case 2, we get $\lim_{n\rightarrow \infty}W_n(x)=1/c$. Consequently,
\begin{equation*}
\begin{split}
 \lim_{n\to\infty}\Pi_n(x)=1-\lim_{n\to\infty}(\phi_n'(0+)W_n(x))=1-c\cdot\frac{1}{c}=0.
\end{split}
\end{equation*}
\end{proof}

Next, we examine Theorem \ref{th0514} by plotting the graphs of $\Pi_n(x)$ for different values of $\lambda$ and $n$, while keeping the parameter $c=2$ fixed. The jump size is modeled as an exponential random variable with parameter $\delta=1$. We then employ numerical inversion of the Laplace transform. The resulting plots are consistent with the statement of Theorem \ref{th0514}.

\begin{figure}[htb]
\centering
\subfigure[$\lambda=1/2$]{
\includegraphics[width=8cm]{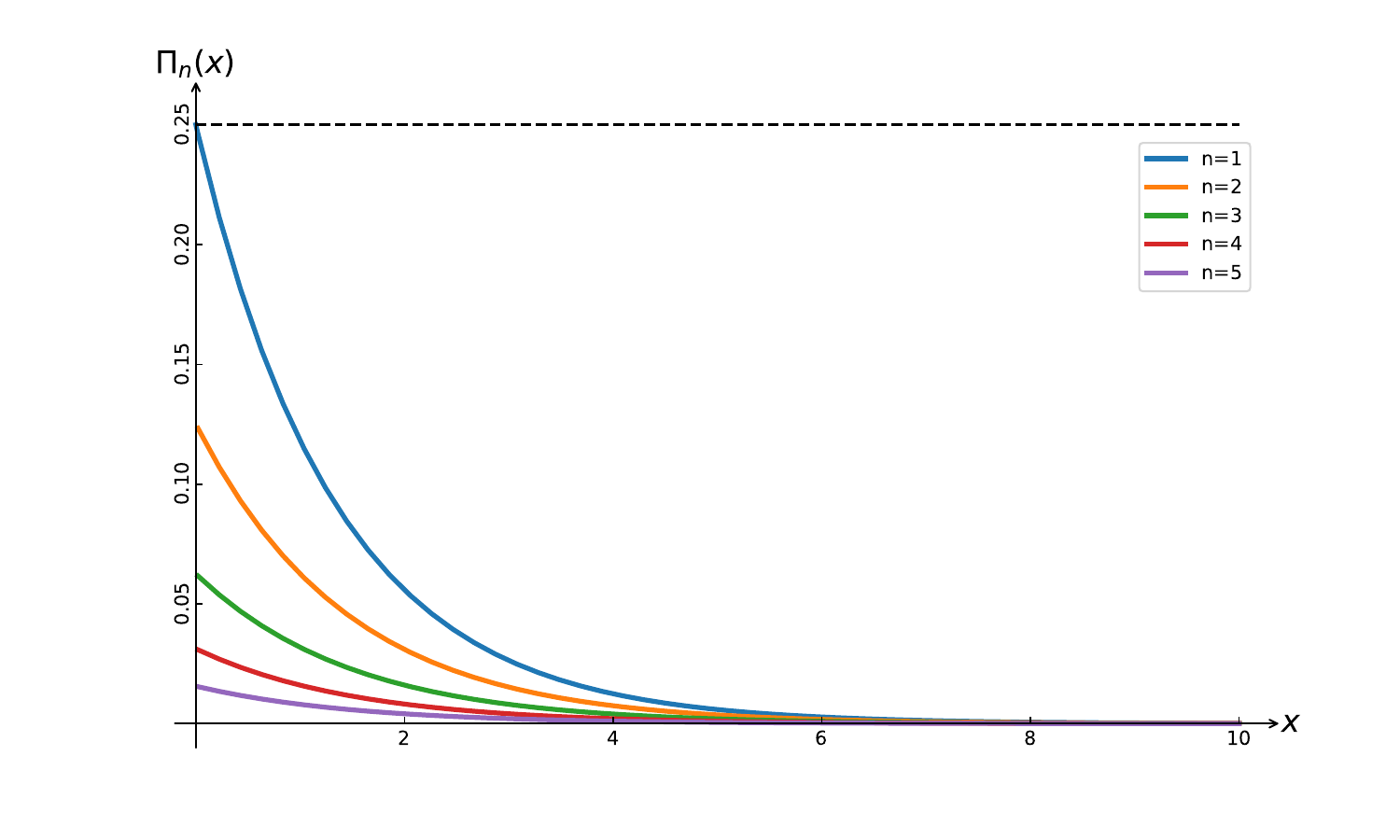}
}
\quad
\subfigure[$\lambda=1$]{
\includegraphics[width=8cm]{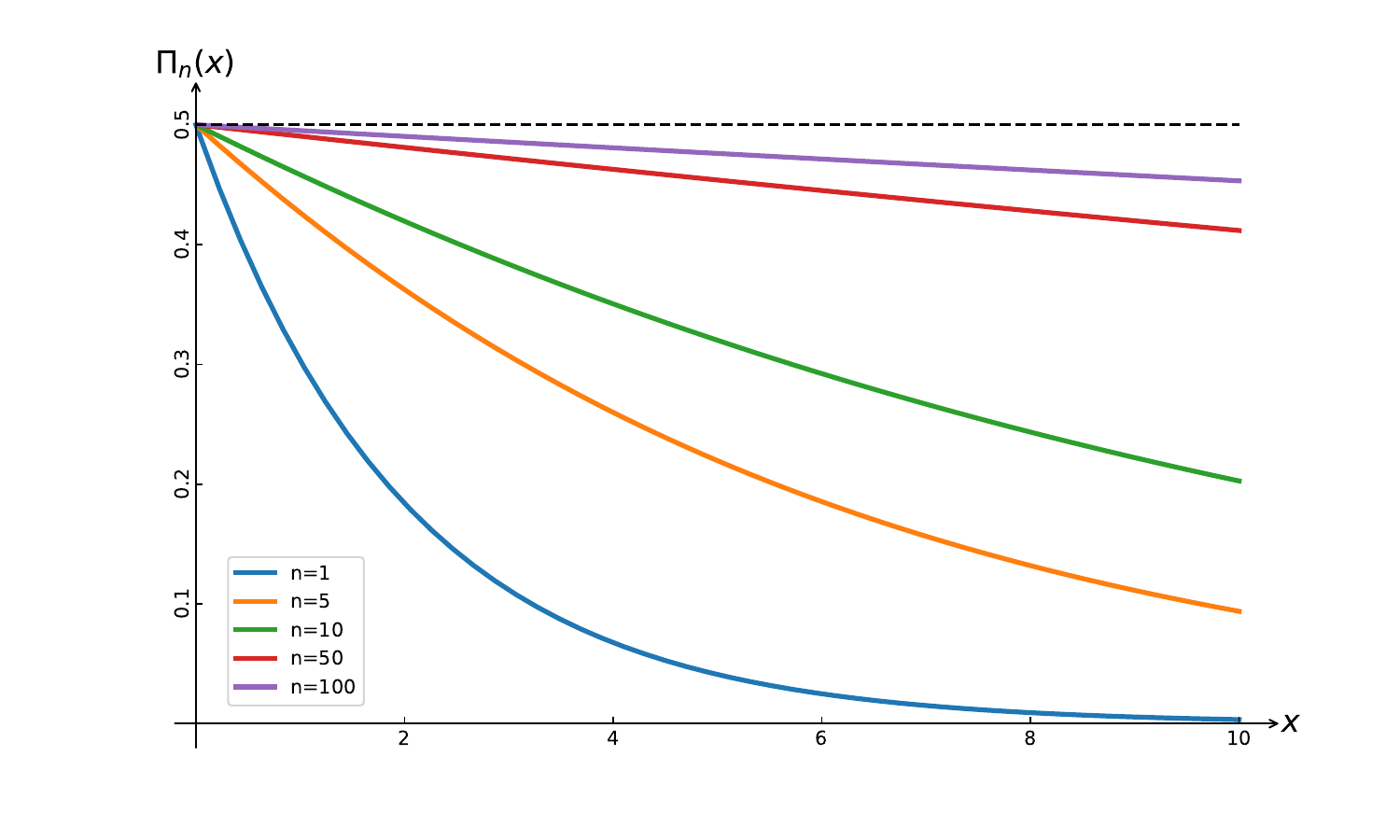}
}
\caption{The ruin probabilities for various values of $\lambda$}
\end{figure}

\newpage

\section{Integro-differential equations associated with ruin probabilities}
Having determined in Theorem \ref{th0514} the asymptotic behavior of the ruin probability $\Pi_n(x)$ as $n \to \infty$, we now focus on a more precise description of the survival probability $\Lambda_n(x)=1-\Pi_n(x)$ for finite $n$. Although the asymptotic results yield valuable qualitative information about the long-term dynamics of the surplus process, effective use in insurance risk management demands a detailed analysis of how the survival probability depends on the initial surplus $x$ and on the model parameters $(\lambda, n, c, m)$. In this section, we derive the integro-differential equations satisfied by $\Lambda_n(x)$. These equations are crucial for numerical evaluation and offer a rigorous basis for investigating the smoothness properties of the ruin probability. The derivation builds on the central observation, already used in the proof of Lemma \ref{le1}, that the MIPP-driven surplus process can be embedded into the classical Cramér–Lundberg model by expressing each claim as a compound Poisson sum. This correspondence enables us to apply standard tools from risk theory, such as conditioning on the first jump time, to obtain integro-differential equations for both the right and left derivatives of the survival probability.

We begin by recalling that $\Lambda_n(x)=1-\Pi_n(x)$ represents the survival probability. The following result establishes an integro-differential equation satisfied by this quantity.

\begin{theorem}\label{th1}
Assume that $c>\lambda^nm$. The survival probability $\Lambda_n(x)$ is continuous on $\mathbb{R}_+$, possessing right and left derivatives denoted by $\Lambda_n^{+}(x)$ and $\Lambda_n^-(x)$, respectively. In addition, we have
\begin{equation}\label{derivatives}
\begin{split}
 c\Lambda_n^{+}(x)&=\lambda q_{n-1}\Lambda_n(x)-\lambda\sum_{k=1}^{\infty}P(V_1^{(n-1)}=k)\int_0^x\Lambda_n(x-y)dF^{*k}(y),\\
 c\Lambda_n^-(x)&=\lambda q_{n-1}\Lambda_n(x)-\lambda\sum_{k=1}^{\infty}P(V_1^{(n-1)}=k)\int_0^{x_-}\Lambda_n(x-y)dF^{*k}(y).
\end{split}
\end{equation}

\end{theorem}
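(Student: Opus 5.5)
We reduce to the classical Cramér–Lundberg setting, as in the proof of Lemma \ref{le1}. By Proposition \ref{sojorn}, the jump times of $V^{(n)}_t$ form a Poisson process with rate $\lambda q_{n-1}$. By Proposition \ref{jjj} and the Lévy property, the successive jump sizes of $V^{(n)}$ are i.i.d. with law $P(V_1^{(n-1)}=k)/q_{n-1}$, $k\ge1$. Hence $X^{(n)}_t(x)$ is a compound Poisson risk process with claim arrival rate $\beta:=\lambda q_{n-1}$. Its claim size $\eta$ has distribution
\[
G(y)=\frac{1}{q_{n-1}}\sum_{k=1}^\infty P(V_1^{(n-1)}=k)F^{*k}(y),
\]
and $\beta E\eta=\lambda E V_1^{(n-1)} m=\lambda^n m<c$. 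The strong Markov property at the first claim time $J_1^{(n)}$ then gives the renewal equation
\[
\Lambda_n(x)=\int_0^\infty \beta e^{-\beta t}\int_0^{x+ct}\Lambda_n(x+ct-y)\,dG(y)\,dt .
\]
This holds because ruin cannot occur before $J_1^{(n)}$: the premium inflow is positive. It also uses $\Lambda_n(z)=0$ for $z<0$.

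Substitute $s=x+ct$ to obtain
\[
\Lambda_n(x)=\frac{\beta}{c}e^{\beta x/c}\int_x^\infty e^{-\beta s/c}h(s)\,ds,\qquad h(s):=\int_{[0,s]}\Lambda_n(s-y)\,dG(y).
\]
Here $h$ is bounded by $1$ and measurable, since $\Lambda_n$ is monotone nondecreasing by a pathwise comparison of initial capitals. The integral is therefore finite and absolutely continuous in $x$, which shows that $\Lambda_n$ is continuous on $\mathbb{R}_+$.

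Differentiation from the right and from the left then yields
\[
c\Lambda_n^{\pm}(x)=\beta\Lambda_n(x)-\beta\,h(x\pm),
\]
provided the one-sided limits $h(x+)$ and $h(x-)$ exist and coincide with the one-sided limits of the integrand average.

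The main step is to justify these one-sided derivatives and identify the limits. I would do this by showing that $h(x+)=\int_{[0,x]}\Lambda_n(x-y)\,dG(y)$ and $h(x-)=\int_{[0,x)}\Lambda_n(x-y)\,dG(y)$.

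\textbf{Right limit.} For $s\downarrow x$, split $h(s)$ into the integral over $[0,x]$ and the integral over $(x,s]$. The second part is bounded by $G(s)-G(x)\to0$. In the first part, $\Lambda_n(s-y)\to\Lambda_n(x-y)$ by continuity of $\Lambda_n$, and dominated convergence applies.

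\textbf{Left limit.} For $s\uparrow x$, integrate over $[0,s]\subset[0,x)$. Here $\mathbf 1_{[0,s]}(y)\Lambda_n(s-y)\to\mathbf 1_{[0,x)}(y)\Lambda_n(x-y)$ pointwise, and dominated convergence again applies.

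With these limits, the fundamental theorem of calculus gives the one-sided derivatives. The function $s\mapsto e^{-\beta s/c}h(s)$ has one-sided limits at every point, so the map $x\mapsto\int_x^\infty(\cdot)$ has right and left derivatives equal to minus those limits.

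Finally, substitute $\beta=\lambda q_{n-1}$ and $\beta\,dG=\lambda\sum_{k\ge1}P(V_1^{(n-1)}=k)\,dF^{*k}$. Interchanging the sum and the integral is justified by Tonelli, since all terms are nonnegative. This gives exactly (\ref{derivatives}), with the integral over $[0,x]$ for $\Lambda_n^+$ and over $[0,x)$ for $\Lambda_n^-$.

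The hypothesis $c>\lambda^n m$ is not needed for the derivation. It only ensures, via Lemma \ref{le1}, that $\Lambda_n$ is nontrivial, so that the equation is meaningful.
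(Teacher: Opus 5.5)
Your proof is correct, but it is organized differently from the paper's. Both arguments reduce $X^{(n)}$ to a compound Poisson risk process with arrival rate $\beta=\lambda q_{n-1}$ and batch-claim distribution $G_n=G$, and both condition on the first claim epoch.

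The paper takes continuity and one-sided differentiability of $\Lambda_n$ directly from the cited results (Theorem 1.2 of \cite{2016Yuliya}, Theorem 5.3.1 of \cite{rolski1999}). It then obtains the equations by a short-horizon first-step analysis. In your notation, it writes $\Lambda_n(x)=e^{-\beta t}\Lambda_n(x+ct)+\int_0^t\beta e^{-\beta s}h(x+cs)\,ds$, together with the analogous identity for $\Lambda_n(x-ct)$, and lets $t\to0$ in the difference quotients.

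You instead integrate over the whole first-claim time and substitute $s=x+ct$, which gives $\Lambda_n(x)=\frac{\beta}{c}e^{\beta x/c}\int_x^\infty e^{-\beta s/c}h(s)\,ds$. Continuity and both one-sided derivatives follow from this at once. This is essentially the proof of the cited textbook theorem, carried out in the MIPP setting.

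Your route is self-contained and rigorous at the one delicate point. The paper's passage $t\to0$ tacitly requires $h(x+cs)\to\int_{[0,x]}\Lambda_n(x-y)\,dG(y)$ as $s\downarrow0$, and the corresponding left limit $\int_{[0,x)}\Lambda_n(x-y)\,dG(y)$. Your dominated-convergence identification of $h(x\pm)$ supplies exactly this. It also explains why the two equations differ only in whether the endpoint $y=x$ is included. The paper's route is shorter and closer to a generator computation, but it relies on external regularity results.

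Your closing remark is also correct: $c>\lambda^n m$ is not needed for the identities themselves, which hold trivially when $\Lambda_n\equiv0$.
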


\begin{proof}
First, note that the model can be reduced to the classical Cramér–Lundberg framework as it is done in the proof of Lemma \ref{le1}. Therefore continuity, right differentiability, and left differentiability follow directly from Theorem 1.2 in \cite{2016Yuliya} and Theorem 5.3.1 in \cite{rolski1999}.

Next, by invoking Proposition 2 and Proposition 3 in \cite{Hu-2026}, Theorem 1.2 in \cite{2016Yuliya}, and Theorem 5.3.1 in \cite{rolski1999}, we obtain
\begin{equation*}
\begin{split}
 \Lambda_n(x)
 &=e^{-\lambda q_{n-1}t}\Lambda_n(x+ct)
   +\int_0^t \lambda q_{n-1}e^{-\lambda q_{n-1}s}
     \int_0^{x+cs}\Lambda_n(x+cs-y)\,dG_n(y)\,ds,
\end{split}
\end{equation*}
where
\begin{equation*}
\begin{split}
 G_n(y)
 =\sum_{k=1}^{\infty} P\bigl(V_{J^{(n)}_1}^{(n)}=k\bigr)F^{*k}(y)
 =\sum_{k=1}^{\infty}\frac{P\bigl(V_1^{(n-1)}=k\bigr)}{q_{n-1}}\cdot F^{*k}(y),
\end{split}
\end{equation*}
and $J_1^{(n)}$ denotes the first jump time of $V_t^{(n)}$.
This implies
\begin{equation*}
\begin{split}
 c\,\frac{\Lambda_n(x+ct)-\Lambda_n(x)}{ct}
 &=\frac{1-e^{-\lambda q_{n-1}t}}{t}\,\Lambda_n(x+ct)
   -\frac{\displaystyle\int_0^t\lambda q_{n-1}e^{-\lambda q_{n-1}s}
      \int_0^{x+cs}\Lambda_n(x+cs-y)\,dG_n(y)\,ds}{t},
\end{split}
\end{equation*}
and therefore
\begin{equation*}
\begin{split}
 c\Lambda_n^+(x)
 &=\lim_{t\to0}\frac{1-e^{-\lambda q_{n-1}t}}{t}\Lambda_n(x+ct)
   -\lim_{t\to0}\frac{\displaystyle\int_0^t\lambda q_{n-1}e^{-\lambda q_{n-1}s}
      \int_0^{x+cs}\Lambda_n(x+cs-y)\,dG_n(y)\,ds}{t}\\
 &=\lambda q_{n-1}\Lambda_n(x)
   -\lim_{t\to0}\lambda q_{n-1}e^{-\lambda q_{n-1}t}
      \int_0^{x+ct}\Lambda_n(x+ct-y)\,dG_n(y)\\
 &=\lambda q_{n-1}\Lambda_n(x)
   -\lambda q_{n-1}\int_0^x \Lambda_n(x-y)\,dG_n(y)\\
 &=\lambda q_{n-1}\Lambda_n(x)
   -\lambda\sum_{k=1}^{\infty}P\bigl(V_1^{(n-1)}=k\bigr)
      \int_0^x \Lambda_n(x-y)\,dF^{*k}(y).
\end{split}
\end{equation*}

For the left derivative, we proceed analogously:
\begin{equation*}
\begin{split}
 \Lambda_n(x-ct)
 &=e^{-\lambda q_{n-1}t}\Lambda_n(x)
   +\int_0^t\lambda q_{n-1}e^{-\lambda q_{n-1}s}
     \int_0^{x-c(t-s)}\Lambda_n(x-c(t-s)-y)\,dG_n(y)\,ds,
\end{split}
\end{equation*}
which yields
\begin{equation*}
\begin{split}
 c\,\frac{\Lambda_n(x-ct)-\Lambda_n(x)}{ct}
 &=\frac{e^{-\lambda q_{n-1}t}-1}{t}\,\Lambda_n(x)
   +\frac{\displaystyle\int_0^t\lambda q_{n-1}e^{-\lambda q_{n-1}s}
      \int_0^{x-c(t-s)}\Lambda_n(x-c(t-s)-y)\,dG_n(y)\,ds}{t},
\end{split}
\end{equation*}
and hence
\begin{equation*}
\begin{split}
 c\Lambda_n^-(x)
 &=\lim_{t\to0}\frac{1-e^{-\lambda q_{n-1}t}}{t}\Lambda_n(x)
   -\lim_{t\to0}\frac{\displaystyle\int_0^t\lambda q_{n-1}e^{-\lambda q_{n-1}s}
      \int_0^{x-c(t-s)}\Lambda_n(x-c(t-s)-y)\,dG_n(y)\,ds}{t}\\
 &=\lambda q_{n-1}\Lambda_n(x)
   -\lambda q_{n-1}\int_0^{x_-}\Lambda_n(x-y)\,dG_n(y)\\
 &=\lambda q_{n-1}\Lambda_n(x)
   -\lambda\sum_{k=1}^{\infty}P\bigl(V_1^{(n-1)}=k\bigr)
      \int_0^{x_-}\Lambda_n(x-y)\,dF^{*k}(y).
\end{split}
\end{equation*}
\end{proof}

\begin{remark} (An alternative derivation)
Alternatively, in order to prove Theorem \ref{th1}, we may represent the process as
\begin{equation*}
\begin{split}
 X^{(n)}_t(x)=x+ct-\sum_{i=0}^{N_t}\Big(\sum_{j=0}^{V_1^{(n-1)}}\xi_j\Big),
\end{split}
\end{equation*}
as in Lemma \ref{le1}. For a small time horizon $t$, we look at the first jump time $s$ of $N_t$ and decompose the evolution into three mutually exclusive cases:

1. The first jump time satisfies $s>t$, so $N_t$ does not jump on $[0,t]$;
2. The first jump time satisfies $s\le t$, but $V_1^{(n-1)}$ does not jump, i.e., $N_t$ has a jump but $V_1^{(n-1)}$ does not;
3. The first jump time satisfies $s\le t$ and, simultaneously, $V_1^{(n-1)}$ jumps, i.e., both $N_t$ and $V_1^{(n-1)}$ have a jump.

1. On the event that $N_t$ does not jump, the survival probability is
\begin{equation*}
\begin{split}
 P(N_t=0)\Lambda_n(x+ct)=e^{-\lambda t}\Lambda_n(x+ct).
\end{split}
\end{equation*}

2. On the event that $N_t$ jumps but $V_1^{(n-1)}$ does not, conditioning on the first jump time $s$, the survival probability is
\begin{equation*}
\begin{split}
 \int_0^t\lambda e^{-\lambda s}P(V_1^{(n-1)}=0)\Lambda_n(x+cs)\,ds
  =(1-q_{n-1})\int_0^t\lambda e^{-\lambda s}\Lambda_n(x+cs)\,ds.
\end{split}
\end{equation*}

3. If both $N_t$ and $V_1^{(n-1)}$ jump, again conditioning on the first jump time $s$, the survival probability equals
\begin{equation*}
\begin{split}
 &\int_0^t\lambda e^{-\lambda s}\sum_{k=1}^{\infty}P(V_1^{(n-1)}=k)\int_0^{x+cs}\Lambda_n(x+cs-y)\,dF^{*k}(y)\,ds\\
 =&\sum_{k=1}^{\infty}P(V_1^{(n-1)}=k)\int_0^t\lambda e^{-\lambda s}\int_0^{x+cs}\Lambda_n(x+cs-y)\,dF^{*k}(y)\,ds.
\end{split}
\end{equation*}
Combining these three cases yields
\begin{equation*}
\begin{split}
 \Lambda_n(x)&=e^{-\lambda t}\Lambda_n(x+ct)
 +(1-q_{n-1})\int_0^t\lambda e^{-\lambda s}\Lambda_n(x+cs)\,ds\\
 &\quad+\sum_{k=1}^{\infty}P(V_1^{(n-1)}=k)\int_0^t\lambda e^{-\lambda s}\int_0^{x+cs}\Lambda_n(x+cs-y)\,dF^{*k}(y)\,ds.
\end{split}
\end{equation*}
We now compute the right derivative as
\begin{equation*}
\begin{split}
 c\Lambda_n^+(x)&=\lim_{t\to0}\frac{(1-e^{-\lambda t})\Lambda_n(x+ct)}{t}
 -(1-q_{n-1})\lim_{t\to0}\frac{\int_0^t\lambda e^{-\lambda s}\Lambda_n(x+cs)\,ds}{t}\\
 &\quad-\sum_{k=1}^{\infty}P(V_1^{(n-1)}=k)\lim_{t\to0}\frac{\int_0^t\lambda e^{-\lambda s}\int_0^{x+cs}\Lambda_n(x+cs-y)\,dF^{*k}(y)\,ds}{t}\\
 &=\lambda q_{n-1}\Lambda_n(x)-\lambda\sum_{k=1}^{\infty}P(V_1^{(n-1)}=k)\int_0^x\Lambda_n(x-y)\,dF^{*k}(y),
\end{split}
\end{equation*}
which coincides with (\ref{derivatives}). Likewise, from
\begin{equation*}
\begin{split}
 \Lambda_n(x-ct)&=e^{-\lambda t}\Lambda_n(x)
 +(1-q_{n-1})\int_0^t\lambda e^{-\lambda s}\Lambda_n(x-c(t-s))\,ds\\
 &\quad+\sum_{k=1}^{\infty}P(V_1^{(n-1)}=k)\int_0^t\lambda e^{-\lambda s}\int_0^{x-c(t-s)}\Lambda_n(x-c(t-s)-y)\,dF^{*k}(y)\,ds,
\end{split}
\end{equation*}
we obtain the left derivative
\begin{equation*}
\begin{split}
 c\Lambda_n^-(x)&=\lim_{t\to0}\frac{(1-e^{-\lambda t})\Lambda_n(x)}{t}
 -(1-q_{n-1})\lim_{t\to0}\frac{\int_0^t\lambda e^{-\lambda s}\Lambda_n(x-c(t-s))\,ds}{t}\\
 &\quad-\sum_{k=1}^{\infty}P(V_1^{(n-1)}=k)\lim_{t\to0}\frac{\int_0^t\lambda e^{-\lambda s}\int_0^{x-c(t-s)}\Lambda_n(x-c(t-s)-y)\,dF^{*k}(y)\,ds}{t}\\
 &=\lambda q_{n-1}\Lambda_n(x)-\lambda\sum_{k=1}^{\infty}P(V_1^{(n-1)}=k)\int_0^{x_-}\Lambda_n(x-y)\,dF^{*k}(y),
\end{split}
\end{equation*}
which is again identical to (\ref{derivatives}).
\end{remark}

The ruin probability $\Pi_n(x)$ similarly satisfies the following integro-differential equation.

\begin{theorem}\label{4.33}
Assume $c>\lambda^nm$. Then the ruin probability $\Pi_n(x)$ satisfies the integral equation
\begin{equation*}
\begin{split}
 c\Pi_n(x)=\lambda\sum_{k=1}^{\infty}P(V_1^{(n-1)}=k)\left(\int_x^{\infty}(1-F^{*k}(y))\,dy+\int_0^x\Pi_n(x-y)(1-F^{*k}(y))\,dy\right).
\end{split}
\end{equation*}
\end{theorem}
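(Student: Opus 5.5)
Integrating the integro-differential equation of Theorem \ref{th1} from $0$ to $x$ and rewriting it in terms of $\Pi_n=1-\Lambda_n$ should give the result, with the boundary value $\Lambda_n(0)=1-\lambda^n m/c$ entering at the end. Since $\Lambda_n$ is continuous with one-sided derivatives that differ only on the (at most countable) set of atoms of the $F^{*k}$, $\Lambda_n$ is absolutely continuous. Hence
\[
c\Lambda_n(x)-c\Lambda_n(0)=\lambda q_{n-1}\int_0^x\Lambda_n(z)\,dz-\lambda\sum_{k\ge1}P(V_1^{(n-1)}=k)\int_0^x\int_0^z\Lambda_n(z-y)\,dF^{*k}(y)\,dz .
\]
Interchanging sum and integrals is justified by nonnegativity and boundedness of $\Lambda_n$.

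Next I would apply Fubini to the double integral in the standard way:
\[
\int_0^x\int_0^z\Lambda_n(z-y)\,dF^{*k}(y)\,dz=\int_0^x\Lambda_n(x-u)F^{*k}(u)\,du .
\]
Writing $q_{n-1}=\sum_{k\ge1}P(V_1^{(n-1)}=k)$ and $\int_0^x\Lambda_n(z)\,dz=\int_0^x\Lambda_n(x-u)\,du$, the right-hand side collapses to
\[
\lambda\sum_{k\ge1}P(V_1^{(n-1)}=k)\int_0^x\Lambda_n(x-u)\bigl(1-F^{*k}(u)\bigr)\,du .
\]
Now substitute $\Lambda_n=1-\Pi_n$. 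The constant part gives $\lambda\sum_k P(V_1^{(n-1)}=k)\int_0^x(1-F^{*k}(u))\,du$. Using $c\Lambda_n(x)-c\Lambda_n(0)=c\Pi_n(0)-c\Pi_n(x)$, I obtain
\[
c\Pi_n(x)=c\Pi_n(0)-\lambda\sum_k P(V_1^{(n-1)}=k)\int_0^x(1-F^{*k}(u))\,du+\lambda\sum_k P(V_1^{(n-1)}=k)\int_0^x\Pi_n(x-u)(1-F^{*k}(u))\,du .
\]

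The main obstacle is identifying $c\Pi_n(0)$. This requires $\Lambda_n(0)=1-\lambda^n m/c$, which the paper asserts but has not yet proved, so I would derive it. The cleanest route is to let $x\to\infty$ in the last identity. Since $\Pi_n(x)\to0$ by Lemma \ref{le1} ii), and $\int_0^\infty(1-F^{*k}(u))\,du=km$, dominated convergence on the convolution term (dominated by $(1-F^{*k})$, which is integrable with total weight $\sum_k kP(V_1^{(n-1)}=k)m=\lambda^{n-1}m<\infty$) sends that term to $0$. This yields
\[
c\Pi_n(0)=\lambda\sum_k P(V_1^{(n-1)}=k)\,km=\lambda\cdot\lambda^{n-1}m=\lambda^n m .
\]
Here $E V_1^{(n-1)}=\lambda^{n-1}$ follows from differentiating the exponent \eqref{ell} at $0$. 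Substituting back and using $\int_0^\infty-\int_0^x=\int_x^\infty$ gives exactly the stated equation. Alternatively, $\Pi_n(0)=\lambda^n m/c$ could be read off from \eqref{ruin-10} with $W_n(0)=1/c$, since the process has bounded variation and drift $c$.
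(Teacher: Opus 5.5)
Your proposal is correct and follows essentially the same route as the paper: integrate the right-derivative equation of Theorem \ref{th1} and use Fubini to collapse the right-hand side into $\lambda\sum_{k}P(V_1^{(n-1)}=k)\int_0^x\Lambda_n(x-y)(1-F^{*k}(y))\,dy$. Then let $x\to\infty$ to get $\Lambda_n(0)=1-\lambda^n m/c$ and substitute $\Lambda_n=1-\Pi_n$. The differences are only cosmetic: you take the limit after switching to $\Pi_n$, using dominated convergence, where the paper takes it in the $\Lambda_n$ form using monotone convergence; and you supply the absolute-continuity justification for integrating $\Lambda_n^{+}$ that the paper leaves implicit.
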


\begin{proof}
Integrating the first identity in (\ref{derivatives}) yields
\begin{equation}\label{2}
\begin{split}
 c(\Lambda_n(x)-\Lambda_n(0))&=\lambda q_{n-1}\int_0^x\Lambda_n(z)\,dz-\lambda\sum_{k=1}^{\infty}P(V_1^{(n-1)}=k)\int_0^x\int_0^z\Lambda_n(z-y)\,dF^{*k}(y)\,dz\\
 &=\lambda q_{n-1}\int_0^x\Lambda_n(z)\,dz-\lambda\sum_{k=1}^{\infty}P(V_1^{(n-1)}=k)\int_0^x\Lambda_n(z)F^{*k}(x-z)\,dz\\
 &=\lambda\sum_{k=1}^{\infty}P(V_1^{(n-1)}=k)\int_0^x\Lambda_n(z)\bigl(1-F^{*k}(x-z)\bigr)\,dz\\
 &=\lambda\sum_{k=1}^{\infty}P(V_1^{(n-1)}=k)\int_0^x\Lambda_n(x-y)\bigl(1-F^{*k}(y)\bigr)\,dy.
\end{split}
\end{equation}
Applying the monotone convergence theorem and letting $x\to+\infty$, we obtain
\begin{equation*}
\begin{split}
 c\bigl(\lim_{x\to+\infty}\Lambda_n(x)-\Lambda_n(0)\bigr)&=\lambda\sum_{k=1}^{\infty}P(V_1^{(n-1)}=k)\int_0^{+\infty}\bigl(1-F^{*k}(y)\bigr)\,dy\\
 &=\lambda\sum_{k=1}^{\infty}P(V_1^{(n-1)}=k)\,km=\lambda m E[V_1^{(n-1)}]\\
 &=\lambda^nm,
\end{split}
\end{equation*}
which gives
\begin{equation*}
\begin{split}
 \Lambda_n(0)=1-\frac{\lambda^nm}{c},
\end{split}
\end{equation*}
as $\lim_{x\rightarrow +\infty}\Lambda_n(x)=1-\lim_{x\rightarrow +\infty}\Pi_n(x)=1$ due to part ii) of Lemma \ref{le1}. Substituting $\Lambda_n(x)=1-\Pi_n(x)$ into (\ref{2}), we arrive at
\begin{equation*}
\begin{split}
 c\Pi_n(x)&=\lambda^nm-\lambda\sum_{k=1}^{\infty}P(V_1^{(n-1)}=k)\left(\int_0^x\bigl(1-F^{*k}(y)\bigr)\,dy-\int_0^x\Pi_n(x-y)\bigl(1-F^{*k}(y)\bigr)\,dy\right)\\
 &=\lambda\sum_{k=1}^{\infty}P(V_1^{(n-1)}=k)\left(\int_x^{\infty}\bigl(1-F^{*k}(y)\bigr)\,dy+\int_0^x\Pi_n(x-y)\bigl(1-F^{*k}(y)\bigr)\,dy\right).
\end{split}
\end{equation*}
\end{proof}

\begin{remark} As an illustration, consider the most straightforward situation: $\xi=m$. Assume that all claim sizes are identical and equal to some constant $m>0$. In this case, the $k$-fold convolution of $F$ is given by $F^{*k}(y)=1_{\{y\ge km\}}$, which is a step function. The integral in (\ref{derivatives}) therefore reduces to a sum over discrete points:
\begin{equation*}
\begin{split}
 \int_0^x\Lambda_n(x-y)\,dF^{*k}(y)=\Lambda_n(x-km)1_{\{x\ge km\}}.
\end{split}
\end{equation*}
As a result, the original integro-differential equation simplifies to a delay differential equation:
\begin{equation*}
\begin{split}
 c\Lambda_n'(x)=\lambda q_{n-1}\Lambda_n(x)-\lambda\sum_{k=1}^{\lfloor x/m\rfloor}P(V_1^{(n-1)}=k)\Lambda_n(x-km).
\end{split}
\end{equation*}
This delay equation can then be solved iteratively on each interval $[km,(k+1)m)$ by applying the method of steps, which yields an explicit piecewise solution consisting of exponential and polynomial terms.
\end{remark}

\begin{example}
For $n=2$, we have
\begin{equation*}
\begin{split}
 c\Lambda_2'(x)=\lambda q_{1}\Lambda_2(x)-\lambda\sum_{k=1}^{\lfloor x/m\rfloor}P(N_1=k)\Lambda_2(x-km).
\end{split}
\end{equation*}
Choosing $\lambda=m=1$ and $c=2$ yields
\begin{equation*}
\begin{split}
 \Lambda_2'(x)=\frac{1-e^{-1}}{2}\Lambda_2(x)-\frac{e^{-1}}{2}\sum_{k=1}^{\lfloor x\rfloor}\frac{1}{k!}\Lambda_2(x-k),
\end{split}
\end{equation*}
with the initial condition $\Lambda_2(0)=\frac{1}{2}$. The corresponding survival probability $\Lambda_2(x)$ is plotted in Figure \ref{fig:survival}. From the figure, we observe that the survival probability starts at $0.5$ and increases towards $1$ as $x$ grows. In particular, at $x=10$, we obtain $\Lambda_2(10)=0.9979$.

\begin{figure}[htbp]  
    \centering
    \includegraphics[width=0.6\textwidth]{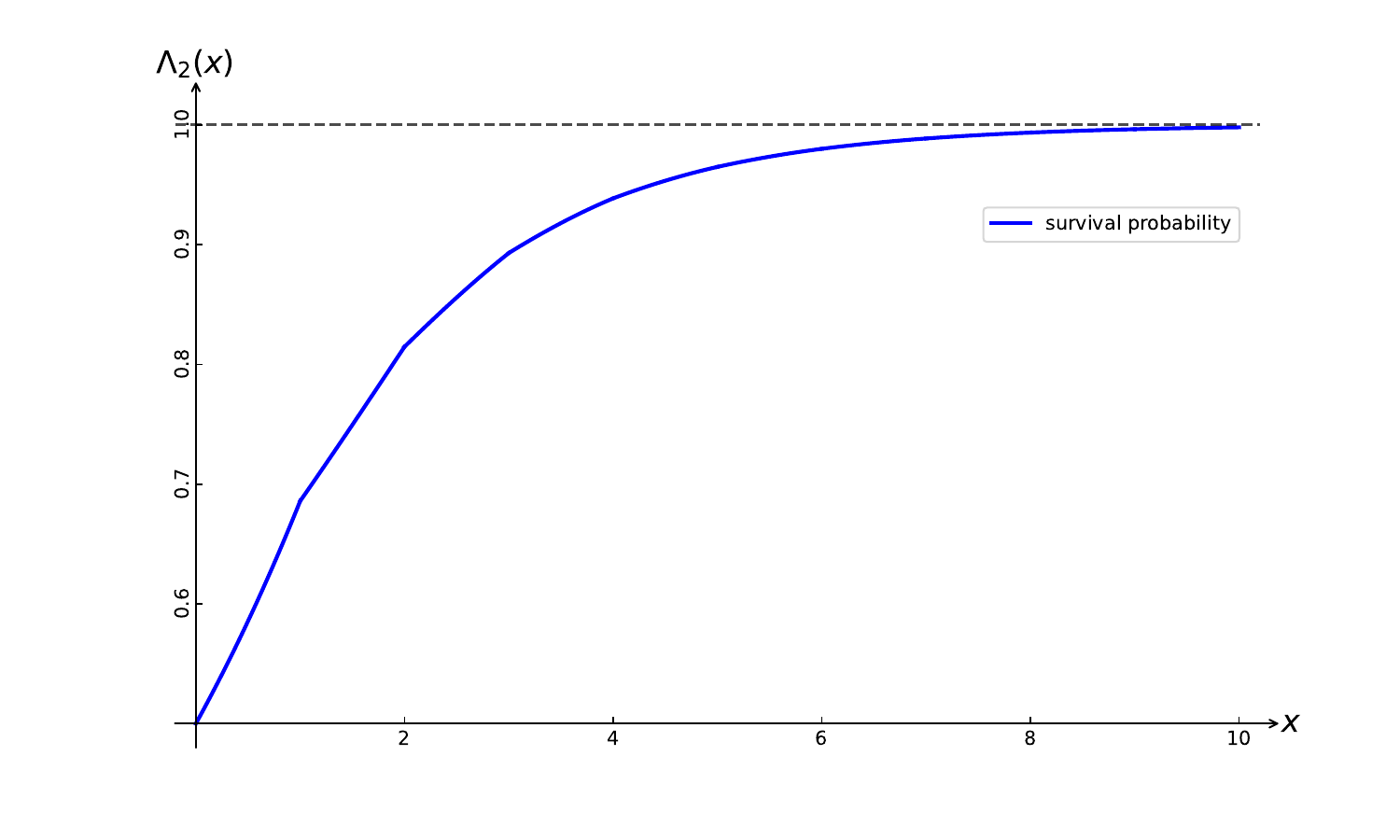} 
    \caption{Survival probability}
    \label{fig:survival}
\end{figure}

\end{example}

\newpage

\begin{remark} (Volterra Integral Equation) Recall that in the proof of Theorem \ref{4.33} we established that $\Lambda_n(0)=1-\frac{\lambda^nm}{c}$. From the first equation in (\ref{derivatives}), we obtain 
\begin{equation*}
\begin{split}
 \Lambda_n'(x)&=\frac{\lambda q_{n-1}}{c}\Lambda_n(x)-\frac{\lambda}{c}\sum_{k=1}^{\infty}P(V_1^{(n-1)}=k)\int_0^x\Lambda_n(x-y)dF^{*k}(y)\\
 &=\frac{\lambda q_{n-1}}{c}\Lambda_n(x)-\frac{\lambda q_{n-1}}{c}\int_0^x\Lambda_{n}(x-y)dG_n(y),
\end{split}
\end{equation*}
and hence
\begin{equation*}
\begin{split}
 \left(e^{-\frac{\lambda q_{n-1}}{c}x}\Lambda_n(x)\right)'=-\frac{\lambda q_{n-1}}{c}e^{-\frac{\lambda q_{n-1}}{c}x}\int_0^x\Lambda_{n}(x-y)dG_n(y).
\end{split}
\end{equation*}
Integrating both sides from $0$ to $x$ yields
\begin{equation*}
\begin{split}
 e^{-\frac{\lambda q_{n-1}}{c}x}\Lambda_n(x)-\Lambda_n(0)&=-\frac{\lambda q_{n-1}}{c}\int_0^xe^{-\frac{\lambda q_{n-1}}{c}z}\int_0^z\Lambda_{n}(z-y)dG_n(y)dz\\
 &=-\frac{\lambda q_{n-1}}{c}\int_0^x\int_y^xe^{-\frac{\lambda q_{n-1}}{c}z}\Lambda_{n}(z-y)dzdG_n(y)\\
 &\overset{t=z-y}{=}-\frac{\lambda q_{n-1}}{c}\int_0^x\int_0^{x-y}e^{-\frac{\lambda q_{n-1}}{c}(y+t)}\Lambda_{n}(t)dtdG_n(y)\\
 &=-\frac{\lambda q_{n-1}}{c}\int_0^x\Lambda_n(t)\int_0^{x-t}e^{-\frac{\lambda q_{n-1}}{c}(y+t)}dG_n(y)dt,
\end{split}
\end{equation*}
which, together with $\Lambda_n(0)=1-\frac{\lambda^nm}{c}$, leads to
\begin{equation}\label{volterra}
\begin{split}
 \Lambda_n(x)=(1-\frac{\lambda^nm}{c})e^{\frac{\lambda q_{n-1}}{c}x}-\frac{\lambda q_{n-1}}{c}\int_0^x\Lambda_n(t)\int_0^{x-t}e^{-\frac{\lambda q_{n-1}}{c}(y+t-x)}dG_n(y)dt.
\end{split}
\end{equation}
By \cite{Polyanin}, equation (\ref{volterra}) is a classical Volterra integral equation of the second kind with kernel
\begin{equation*}
\begin{split}
 K(x,t)=-\frac{\lambda q_{n-1}}{c}\int_0^{x-t}e^{-\frac{\lambda q_{n-1}}{c}(y+t-x)}dG_n(y).
\end{split}
\end{equation*}
Observe that $K(x,t)$ depends only on the difference $x-t$, i.e., $K(x,t)=K(x-t)$, so it is a difference kernel. For any Volterra equation of the second kind with a locally integrable kernel $K$, there exists a unique solution $\Lambda(x)$ that is locally bounded and continuous on $[0,\infty)$. This is consistent with corresponding Volterra integral equation representations of the ruin probability in Poisson arrival models; see \cite{Gerber1979} and \cite{Seal1969}.
\end{remark}

\section{Laplace transforms of ruin probabilities and scale functions}

In the preceding section, we derived the integro-differential equations that characterize the survival probability. We now focus on a powerful analytical tool for solving these equations: the Laplace transform. This approach not only provides a practical means of determining ruin probabilities—often circumventing the numerical solution of Volterra equations—but also reveals the rich recursive structure inherent in the MIPP framework. In what follows, we derive a closed-form expression for the Laplace transform of the survival probability $\Lambda_n(x)$. This representation immediately leads to a new recursive relation that links the transform across consecutive iteration levels $n$, yielding an efficient computational scheme. Moreover, we emphasize the strong connection between the survival probability and the scale function associated with the surplus process, establishing a central identity that underpins the analysis of first-passage times and prepares the ground for the asymptotic approximations developed in the next section. In this section, we therefore employ Laplace transforms to solve the integro-differential equations obtained previously, following standard methods in the literature; see, for example, \cite{rolski1999}.

 Recall that $f(x)=e^{-1+x}$ and $g(x)=e^{-\lambda+\lambda x}$ and  the $n-$ folded compositions of $f$ and $g$ are denoted by $f_n^{\circ}$ and $g_n^{\circ}$ respectively. We define the Laplace transforms of $F(y)$ and $\Lambda_n(x)$ by
\begin{equation*}
\begin{split}
 \mathcal{L}_F(\theta)&=\int_0^{+\infty} e^{-\theta y}\,dF(y),\quad \theta\ge0,\\
 \mathcal{L}_{\Lambda_n}(\theta)&=\int_0^{+\infty} e^{-\theta x}\Lambda_n(x)\,dx,\quad \theta>0,
\end{split}
\end{equation*}
respectively. We begin by establishing the following auxiliary lemma.

\begin{lemma}\label{le51}
The Laplace transform of $\kappa_n=:\sum_{i=0}^{V_t^{(n)}}\xi_i$ is given by
\begin{equation}\label{laplace_n}
\begin{split}
 \mathcal{L}_{\kappa_n}(\theta)=E[\exp\{-\theta\sum_{i=0}^{V_t^{(n)}}\xi_i\}]=(g_n^{\circ}(E[e^{-\theta\xi}]))^t.
\end{split}
\end{equation}
\end{lemma}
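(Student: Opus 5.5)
The plan is to condition on the counting process, reduce the claim to a statement about the probability generating function of $V_t^{(n)}$, and then identify that generating function with $(g_n^{\circ})^t$ by induction on $n$ using the recursion (\ref{ell}). Throughout I read the random sum $\sum_{i=0}^{V_t^{(n)}}\xi_i$ with the paper's convention that it contains exactly $V_t^{(n)}$ claims, so that it equals $0$ on $\{V_t^{(n)}=0\}$.

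\textbf{Step 1: conditioning.} Since $\{\xi_i\}$ is i.i.d. and independent of $V_t^{(n)}$, conditioning on $V_t^{(n)}=k$ gives
\[
E\Big[e^{-\theta\sum_i\xi_i}\,\Big|\,V_t^{(n)}=k\Big]=\big(E[e^{-\theta\xi}]\big)^k=\mathcal{L}_F(\theta)^k .
\]
Taking expectations then yields $\mathcal{L}_{\kappa_n}(\theta)=E\big[s^{V_t^{(n)}}\big]$ with $s=\mathcal{L}_F(\theta)\in(0,1]$ for $\theta\ge 0$. The claim therefore reduces to the identity $E\big[s^{V_t^{(n)}}\big]=\big(g_n^{\circ}(s)\big)^t$ for $s\in(0,1]$.

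\textbf{Step 2: the generating function.} Write $s=e^{u}$ with $u=\log s\le 0$. Because $V_t^{(n)}$ is a L\'evy process, $E\big[e^{uV_t^{(n)}}\big]=e^{t\ell_n(u)}$. It thus suffices to show $e^{\ell_n(u)}=g_n^{\circ}(e^u)$, which I will prove by induction on $n$.
\begin{itemize}
\item For $n=1$: $e^{\ell_1(u)}=e^{\lambda(e^u-1)}=g(e^u)$.
\item Inductive step: assuming $e^{\ell_{n-1}(u)}=g_{n-1}^{\circ}(e^u)$, the recursion (\ref{ell}) gives
\[
e^{\ell_n(u)}=\exp\big\{\lambda e^{\ell_{n-1}(u)}-\lambda\big\}=g\big(g_{n-1}^{\circ}(e^u)\big)=g_n^{\circ}(e^u).
\]
\end{itemize}
Combining this with Step 1 gives (\ref{laplace_n}).

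\textbf{Main obstacle.} The only real subtlety is justifying (\ref{ell}) on the range $u\le 0$, since the paper derives it formally for a generic $\theta$. I would redo the conditioning computation preceding (\ref{ell}), namely conditioning on $N_t^1$ and using that $V^{(2,n)}$ is a L\'evy process distributed as $V^{(n-1)}$, but with $u\le 0$. In that range every expectation is bounded by $1$, so all series converge absolutely and Fubini applies without any moment assumptions.

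The boundary case $s=0$ (for instance $\theta=\infty$) is not needed, since only $\theta\ge 0$ is considered. If one wanted it anyway, it would follow by continuity, or by the direct computation $P(V_t^{(n)}=0)=e^{-\lambda q_{n-1}t}$ from Proposition \ref{sojorn}.
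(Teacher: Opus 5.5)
Your proof is correct and is essentially the paper's argument: both are an induction on $n$ in which conditioning on the outer Poisson clock $N_t^1$ contributes one more application of $g$. The only difference is bookkeeping. You integrate out the claim sizes first, reducing to $E[s^{V_t^{(n)}}]=(g_n^{\circ}(s))^t$, and run the induction through the exponent recursion (\ref{ell}). The paper instead carries the claims along and inducts directly on the compound sum via $\sum_{i=0}^{V_t^{(n+1)}}\xi_i\overset{d}{=}\sum_{k=0}^{N_t}\big(\sum_{i=0}^{V_1^{(n)}}\xi_i\big)$.
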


\begin{proof} We use induction. When $n=1$, we have $V_t^{(n)}=N_t$, therefore 
\begin{equation*}
\begin{split}
 \mathcal{L}_{\kappa_1}(\theta)&=E[\exp\{-\theta\sum_{i=0}^{N_t}\xi_i\}]=\sum_{k=0}^{\infty}P(N_t=k)(E[\exp\{-\theta\xi\}])^k\\
 &=\sum_{k=0}^{\infty}\frac{(\lambda t)^k}{k!}e^{-\lambda t}(E[\exp\{-\theta\xi\}])^k=e^{-\lambda t+\lambda tE[e^{-\theta\xi}]}\\
 &=(g(E[e^{-\theta\xi}]))^t,
\end{split}
\end{equation*}
which satisfies (\ref{laplace_n}). Now, suppose (\ref{laplace_n}) holds for $n$. We need to prove it holds for $n+1$. Observe
\begin{equation*}
\begin{split}
 \sum_{i=0}^{V^{(n+1)}_t}\xi_i\overset{d}{=}\sum_{k=0}^{N_t}(\sum_{i=0}^{V_1^{(n)}}\xi_i),
\end{split}
\end{equation*}
Therefore
\begin{equation*}
\begin{split}
 \mathcal{L}_{\kappa_{n+1}}(\theta)&=E[\exp\{-\theta\sum_{i=0}^{V^{(n+1)}_t}\xi_i\}]=\sum_{k=0}^{\infty}P(N_t=k)(E[\exp\{-\theta\sum_{i=0}^{V_1^{(n)}}\xi_i\}])^k\\
 &=\sum_{k=0}^{\infty}\frac{(\lambda t)^k}{k!}e^{-\lambda t}(g_n^{\circ}(E[e^{-\theta\xi}]))^k=e^{-\lambda t+\lambda t g_n^{\circ}(E[e^{-\theta\xi}])}\\
 &=(g_{n+1}^{\circ}(E[e^{-\theta\xi}]))^t,
\end{split}
\end{equation*}
which completes the proof.
\end{proof}

In the following theorem, we determine the Laplace transform of the survival function.

\begin{theorem}
Suppose $c>\lambda^nm$, then
\begin{equation}\label{laplace_f}
\begin{split}
 \mathcal{L}_{\Lambda_n}(\theta)=\frac{c-\lambda^nm}{c\theta-\lambda(1-g_{n-1}^{\circ}(E[e^{-\theta\xi}]))},\quad \theta>0.
\end{split}
\end{equation}
\end{theorem}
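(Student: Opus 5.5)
The plan is to take Laplace transforms of the first integro-differential equation in (\ref{derivatives}) of Theorem \ref{th1}, using the boundary value $\Lambda_n(0)=1-\lambda^nm/c$ obtained in the proof of Theorem \ref{4.33}.

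Since $\Lambda_n$ is continuous with right derivative given by a continuous right-hand side, it is absolutely continuous. Because $0\le\Lambda_n\le1$, its Laplace transform exists for $\theta>0$. Integration by parts then gives
\[
\int_0^\infty e^{-\theta x}\Lambda_n'(x)\,dx=\theta\mathcal{L}_{\Lambda_n}(\theta)-\Lambda_n(0).
\]

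On the right-hand side, each convolution term $\int_0^x\Lambda_n(x-y)\,dF^{*k}(y)$ has transform $\mathcal{L}_{\Lambda_n}(\theta)(\mathcal{L}_F(\theta))^k$. Summing over $k$ with weights $P(V_1^{(n-1)}=k)$ is justified by Tonelli, since all terms are nonnegative and the series is dominated by $\sum_k P(V_1^{(n-1)}=k)=q_{n-1}$. This produces
\[
\mathcal{L}_{\Lambda_n}(\theta)\sum_{k\ge1}P(V_1^{(n-1)}=k)(\mathcal{L}_F(\theta))^k=\mathcal{L}_{\Lambda_n}(\theta)\bigl(E[(\mathcal{L}_F(\theta))^{V_1^{(n-1)}}]-P(V_1^{(n-1)}=0)\bigr).
\]

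The key identification is that $E[s^{V_1^{(n-1)}}]=g_{n-1}^{\circ}(s)$ for $s\in[0,1]$. This follows from Lemma \ref{le51} with $t=1$ and $E[e^{-\theta\xi}]=s$; equivalently, it follows by induction from $V^{(n)}_1\overset{d}{=}\sum_{i=1}^{N_1}V^{(n-1)}$-copies. Using $q_{n-1}=1-P(V_1^{(n-1)}=0)$, the right-hand side becomes
\[
\lambda q_{n-1}\mathcal{L}_{\Lambda_n}-\lambda\mathcal{L}_{\Lambda_n}\bigl(g_{n-1}^{\circ}(\mathcal{L}_F(\theta))-(1-q_{n-1})\bigr)=\lambda\bigl(1-g_{n-1}^{\circ}(\mathcal{L}_F(\theta))\bigr)\mathcal{L}_{\Lambda_n}(\theta).
\]

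Equating the two sides gives
\[
c\theta\mathcal{L}_{\Lambda_n}-(c-\lambda^nm)=\lambda\bigl(1-g_{n-1}^{\circ}(\mathcal{L}_F)\bigr)\mathcal{L}_{\Lambda_n},
\]
and solving for $\mathcal{L}_{\Lambda_n}$ yields (\ref{laplace_f}).

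The main obstacle is to ensure that the denominator $c\theta-\lambda(1-g_{n-1}^{\circ}(\mathcal{L}_F(\theta)))$ is nonzero, indeed positive, for all $\theta>0$. I would check this through the Laplace exponent $\phi_n(\theta)=c\theta-\lambda+\lambda g_{n-1}^{\circ}(E e^{-\theta\xi})$. This function is convex and satisfies $\phi_n(0)=0$. Its right derivative at $0$ is $\phi_n'(0+)=c-\lambda^nm>0$, using $\frac{d}{ds}g_{n-1}^\circ(s)|_{s=1}=\lambda^{n-1}$. Convexity together with a positive slope at the origin forces $\phi_n(\theta)>0$ for $\theta>0$.

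A minor point is justifying $\Lambda_n(0)=1-\lambda^nm/c$ and the differentiability almost everywhere. Both are taken from Theorem \ref{th1} and the proof of Theorem \ref{4.33}.
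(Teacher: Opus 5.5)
Your proposal is correct and follows essentially the same route as the paper: Laplace-transform the right-derivative equation in (\ref{derivatives}), integrate by parts using $\Lambda_n(0)=1-\lambda^nm/c$, apply the convolution theorem term by term, and identify $\sum_{k\ge0}P(V_1^{(n-1)}=k)(\mathcal{L}_F(\theta))^k$ with $g_{n-1}^{\circ}(\mathcal{L}_F(\theta))$ via Lemma \ref{le51}. Two small remarks. First, the positivity of the denominator needs no separate convexity argument, since the derived identity $\mathcal{L}_{\Lambda_n}(\theta)\,(c\theta-\lambda(1-g_{n-1}^{\circ}(\mathcal{L}_F(\theta))))=c-\lambda^nm>0$, with $\mathcal{L}_{\Lambda_n}(\theta)\in(0,\infty)$, already forces it. Second, the right-hand side of (\ref{derivatives}) is not continuous in general (it can jump when $F$ has atoms, e.g.\ for deterministic claims), but it is bounded, which is enough to make $\Lambda_n$ Lipschitz and hence absolutely continuous.
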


\begin{proof}
Multiplying the first term of (\ref{derivatives}) by $e^{-\theta x}$ and integrating over $[0,\infty)$, we get
\begin{equation}\label{3}
\begin{split}
 c\int_0^{+\infty}\Lambda_n^+(x)e^{-\theta x}dx=&\lambda q_{n-1}\int_0^{+\infty}\Lambda_n(x)e^{-\theta x}dx\\
 -&\lambda\sum_{k=1}^{\infty}P(V_1^{(n-1)}=k)\int_0^{+\infty}\int_0^x\Lambda_n(x-y)dF^{*k}(y)e^{-\theta x}dx.
\end{split}
\end{equation}
We have
\begin{equation*}
\begin{split}
 \int_0^{+\infty}\Lambda_n^+(x)e^{-\theta x}dx&=\int_0^{+\infty}e^{-\theta x}d(\Lambda_n)_+(x)=e^{-\theta x}(\Lambda_n)_+(x)|_0^{+\infty}-\int_0^{+\infty}(\Lambda_n)_+(x)de^{-\theta x}\\
 &=-\Lambda_n(0)+\theta\mathcal{L}_{\Lambda_n}(\theta)
\end{split}
\end{equation*}
and
\begin{equation*}
\begin{split}
 \int_0^{+\infty}\int_0^x\Lambda_n(x-y)dF^{*k}(y)e^{-\theta x}dx&=\int_0^{+\infty}\int_y^{+\infty}\Lambda_n(x-y)e^{-\theta x}dxdF^{*k}(y)\\
 &=\int_0^{+\infty}\int_0^{+\infty}\Lambda_n(x)e^{-\theta(x+y)}dxdF^{*k}(y)\\
 &=\int_0^{+\infty}\Lambda_n(x)e^{-\theta x}dx\int_0^{+\infty}e^{-\theta y}dF^{*k}(y)\\
 &=\mathcal{L}_{\Lambda_n}(\theta)\mathcal{L}_{F^{*k}}(\theta)=\mathcal{L}_{\Lambda_n}(\theta)(\mathcal{L}_{F}(\theta))^k.
\end{split}
\end{equation*}
By plugging them into (\ref{3}) and noting that $\Lambda_n(0)=1-\frac{\lambda^nm}{c}$ we obtain
\begin{equation*}
\begin{split}
 -c(1-\frac{\lambda^nm}{c})+c\theta\mathcal{L}_{\Lambda_n}(\theta)=\lambda q_{n-1}\mathcal{L}_{\Lambda_n}(\theta)-\lambda\sum_{k=1}^{\infty}P(V_1^{(n-1)}=k)\mathcal{L}_{\Lambda_n}(\theta)(\mathcal{L}_F(\theta))^k.
\end{split}
\end{equation*}
From this we obtain
\begin{equation*}
\begin{split}
 \mathcal{L}_{\Lambda_n}(\theta)&=\frac{c-\lambda^nm}{c\theta-\lambda(q_{n-1}-\sum_{k=1}^{\infty}P(V_1^{(n-1)}=k)(\mathcal{L}_F(\theta))^k)}\\
 &=\frac{c-\lambda^nm}{c\theta-\lambda(1-E[\exp\{-\theta\sum_{i=0}^{V_1^{(n-1)}}\xi_i\}])}\\
 &=\frac{c-\lambda^nm}{c\theta-\lambda(1-g_{n-1}^{\circ}(E[e^{-\theta\xi}]))}.
\end{split}
\end{equation*}
 
\end{proof}

\begin{remark} The Laplace transform formula in $(\ref{laplace_f})$ reveals a crucial structural feature of the MIPP-driven ruin model: the denominator depends on the composition $g_{n-1}^{\circ}(\mathcal{L}_F(\theta))$, which encapsulates the hierarchical claim arrival mechanism through the nested function $g$. This recursive dependence is the analytical signature of the multiple subordination construction. In the limiting case $n=1$, the MIPP reduces to the classical Poisson process, and \ref{laplace_f} simplifies to the well-known expression
\[
\mathcal{L}_{\Lambda_1}(\theta)=\frac{c-\lambda m}{c\theta-\lambda (1-\mathcal{L}_F(\theta))},
\]
which is consistent with the standard Cramér–Lundberg theory (see, e.g., \cite{asmussen2010}, Eq. (2.13)). For $n\geq 2$, he presence of the composition $g_{n-1}^{\circ}$ introduces additional poles and branch points in the complex plane, which in turn reflect the increased complexity of the clustered arrival structure. Moreover, the numerator $c-\lambda^nm$  confirms the critical role of the iteration depth $n$: as 
$n$ increases, the effective mean claim intensity is amplified by a factor of $\lambda^{n-1}$, a phenomenon already observed in Lemma 3.1. This transform also paves the way for numerical inversion techniques—such as the Euler or Gaver–Stehfest algorithms—which can be employed when explicit inversion is infeasible. Finally, we note that the denominator is strictly positive for $\theta>0$ under the net profit condition $c>\lambda^nm$, ensuring that $\mathcal{L}_{\Gamma_n}(\theta)$ is well-defined and that the survival probability is integrable on $\R_+$, as expected.
\end{remark}

Now let $W_n(x)$ denote the scale function corresponding to the process $X^{(n)}_t(x)$, in the sense that
\begin{equation*}
\begin{split}
 \int_0^{+\infty} e^{-\theta x} W_n(x)\,dx = \frac{1}{\phi_n(\theta)},
\end{split}
\end{equation*}
where $\phi_n(\theta)$ is the Laplace exponent of $X^{(n)}_t(0)$, that is, $E\big[e^{\theta X^{(n)}_t(0)}\big] = e^{\phi_n(\theta)t}$. Based on (\ref{laplace_f}), we can derive the following recursive formula.

\begin{corollary}\label{cor53} i)
The Laplace transform of the survival probability satisfies the incursive relation
\begin{equation*}
\begin{split}
 \mathcal{L}_{\Lambda_{n+1}}(\theta)=\frac{c-\lambda^{n+1}m}{c\theta-\lambda\left(1-\exp\left\{\frac{c-\lambda^nm}{\mathcal{L}_{\Lambda_n}(\theta)}-c\theta\right\}\right)}.
\end{split}
\end{equation*}
ii) The scale function associated with $X_t^{(n)}(x)$ satisfies
\begin{equation*}
\begin{split}
 \mathcal{L}_{W_n}(\theta) = \frac{\mathcal{L}_{\Lambda_n}(\theta)}{c - \lambda^n m}.
\end{split}
\end{equation*}
iii) The recursive representation of the Laplace transform of the scale function is
\begin{equation}\label{recursive}
\begin{split}
 \mathcal{L}_{W_{n+1}}(\theta)=\frac{1}{c\theta-\lambda\left(1-\exp\left\{\frac{1}{\mathcal{L}_{W_n}(\theta)}-c\theta\right\}\right)}.
\end{split}
\end{equation}
\end{corollary}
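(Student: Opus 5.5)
The approach is to treat (\ref{laplace_f}) as an algebraic identity in the single quantity $g_{n-1}^{\circ}(\mathcal{L}_F(\theta))$ and to use the elementary relation $g_n^{\circ}=g\circ g_{n-1}^{\circ}$, where $g(x)=e^{-\lambda+\lambda x}$. Throughout, fix $\theta>0$ and assume the net profit conditions needed for both levels, $c>\lambda^{n+1}m$ (and hence also $c>\lambda^n m$ when $\lambda\ge1$). Then (\ref{laplace_f}) holds at levels $n$ and $n+1$, and $\mathcal{L}_{\Lambda_n}(\theta)>0$, so it can be inverted.

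For i), solve (\ref{laplace_f}) at level $n$ for the inner composition:
\[
\frac{c-\lambda^n m}{\mathcal{L}_{\Lambda_n}(\theta)}=c\theta-\lambda+\lambda g_{n-1}^{\circ}(\mathcal{L}_F(\theta)),
\]
which is exactly $\phi_n(\theta)$ from (\ref{scale-9}). Subtracting $c\theta$ gives
\[
\frac{c-\lambda^n m}{\mathcal{L}_{\Lambda_n}(\theta)}-c\theta=-\lambda+\lambda g_{n-1}^{\circ}(\mathcal{L}_F(\theta)).
\]
Exponentiating, the right-hand side becomes $g\bigl(g_{n-1}^{\circ}(\mathcal{L}_F(\theta))\bigr)=g_n^{\circ}(\mathcal{L}_F(\theta))$. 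Substituting this into (\ref{laplace_f}) at level $n+1$, whose denominator involves $1-g_n^{\circ}(\mathcal{L}_F(\theta))$, yields the stated recursion.

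For ii), note that $E[e^{\theta X_t^{(n)}(0)}]$ computed via Lemma \ref{le51} gives $\phi_n(\theta)=c\theta-\lambda+\lambda g_{n-1}^{\circ}(\mathcal{L}_F(\theta))$. Here one should check that the paper's $f_{n-1}^{\circ}$ in (\ref{scale-9}) must really be read as $g_{n-1}^{\circ}$ for general $\lambda$; this is a consistency point worth flagging. Then $\mathcal{L}_{W_n}(\theta)=1/\phi_n(\theta)$, and comparing with the first display above gives $\mathcal{L}_{W_n}=\mathcal{L}_{\Lambda_n}/(c-\lambda^n m)$. Alternatively, one can use (\ref{ruin-10}) together with $\phi_n'(0+)=c-\lambda^n m$.

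For iii), divide i) by $c-\lambda^{n+1}m$ and use ii) at levels $n$ and $n+1$, since $(c-\lambda^n m)/\mathcal{L}_{\Lambda_n}(\theta)=1/\mathcal{L}_{W_n}(\theta)$. Alternatively, derive it directly from $\phi_{n+1}(\theta)=c\theta-\lambda+\lambda\exp\{\phi_n(\theta)-c\theta\}$. This direct route needs no net profit condition, because scale functions are defined for $\theta$ large enough.

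The only real obstacle is bookkeeping of the hypotheses. Part i) needs the net profit condition at both levels $n$ and $n+1$ for (\ref{laplace_f}) to apply. One also needs $\phi_n(\theta)>0$ so that the Laplace transforms exist, which holds for $\theta>0$ since $\phi_n$ is convex with $\phi_n(0)=0$ and $\phi_n'(0+)>0$. The algebra itself is immediate.
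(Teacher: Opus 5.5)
Your proposal is correct and follows essentially the same route as the paper: for part i) you invert (\ref{laplace_f}) at level $n$ to isolate $g_{n-1}^{\circ}(\mathcal{L}_F(\theta))$ and apply $g$ to obtain $g_n^{\circ}(\mathcal{L}_F(\theta))$; for part ii) you compute $\phi_n$ via Lemma \ref{le51} and compare with (\ref{laplace_f}); part iii) then combines the two. Two points you add are valid refinements the paper leaves implicit: that $f_{n-1}^{\circ}$ in (\ref{scale-9}) must read $g_{n-1}^{\circ}$ for general $\lambda$, and that part i) requires the net profit condition at both levels $n$ and $n+1$.
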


\begin{proof} i) 
Starting from (\ref{laplace_f}), we obtain
\begin{equation}\label{mgf1}
\begin{split}
 g_{n-1}^{\circ}(E[e^{-\theta\xi}])=\frac{c-\lambda^nm-(c\theta-\lambda)\mathcal{L}_{\Lambda_n}(\theta)}{\lambda\mathcal{L}_{\Lambda_n}(\theta)}.
\end{split}
\end{equation}
Plugging (\ref{mgf1}) into the representation of $\mathcal{L}_{\Lambda_{n+1}}(\theta)$ gives
\begin{equation*}
\begin{split}
 \mathcal{L}_{\Lambda_{n+1}}(\theta)
   &=\frac{c-\lambda^{n+1}m}{c\theta-\lambda\bigl(1-g_{n}^{\circ}(E[e^{-\theta\xi}])\bigr)}\\
   &=\frac{c-\lambda^{n+1}m}{c\theta-\lambda\bigl(1-g(g_{n-1}^{\circ}(E[e^{-\theta\xi}]))\bigr)}\\
   &=\frac{c-\lambda^{n+1}m}{c\theta-\lambda\left(1-\exp\left\{\frac{c-\lambda^nm}{\mathcal{L}_{\Lambda_n}(\theta)}-c\theta\right\}\right)}.
\end{split}
\end{equation*}

ii) By Lemma \ref{le51}, we have
\begin{equation*}
\begin{split}
 E\big[e^{\theta X^{(n)}_t(0)}\big]
   = e^{c\theta t} E\big[e^{-\theta\sum_{i=0}^{V_t^{(n)}} \xi_i}\big]
   = e^{c\theta t} \big(g_n^{\circ}(E[e^{-\theta \xi}])\big)^t,
\end{split}
\end{equation*}
and therefore
\begin{equation*}
\begin{split}
 \mathcal{L}_{W_n}(\theta)
   = \int_0^{+\infty} e^{-\theta x} W_n(x)\,dx
   = \frac{1}{c\theta + \ln\big(g_n^{\circ}(E[e^{-\theta \xi}])\big)}
   = \frac{1}{c\theta - \lambda\big(1 - g_{n-1}^{\circ}(E[e^{-\theta \xi}])\big)}.
\end{split}
\end{equation*}
Comparing this identity with (\ref{laplace_f}) leads to the asserted relation.

iii) This can easily be obtained combining i) and ii).
\end{proof}

\begin{remark} The identity in Corollary \ref{cor53}, establishes a direct and remarkably simple relationship between the scale function and the survival probability in the MIPP-driven risk model. This result is not merely a formal curiosity; it has profound practical implications. First, it implies that the scale function 
$W_n(x),$  which is traditionally defined through the Laplace exponent of the surplus process and often requires sophisticated fluctuation-theoretic arguments to compute, can be obtained immediately once the survival probability is known. Conversely, the survival probability—and hence the ruin probability via $\Pi_n(x)=1-(c-\lambda^nm)W_n(x)-$ can be recovered directly from the scale function, confirming the well-known duality between these two objects in Lévy process theory (see, e.g., \cite{Kyprianou2014}, Chapter 8). Second, the constant factor $c-\lambda^nm$ in the denominator represents the positive safety loading of the insurer under the net profit condition, and its appearance underscores the fact that the scale function is inversely proportional to the drift of the surplus process. Third, this identity offers a computationally efficient route for evaluating ruin probabilities: one may either invert the Laplace transform of $\Lambda_n(x)$ via (\ref{laplace_f}) or invert that of $W_n(x)$ via Corollary \ref{cor53}, whichever is more convenient for a given numerical scheme. Finally, we emphasize that this relationship is not an artifact of the MIPP structure but rather a general property of spectrally positive Lévy processes with a positive drift; its validity in our setting serves as a consistency check and reinforces the fact that the MIPP-driven surplus process falls squarely within the broader class of Lévy processes with well-defined scale functions. This connection will be exploited in the next section to derive the Cramér–Lundberg approximation, where the asymptotic behavior of $W_n(x)$ as $n\rightarrow \infty$ directly determines the ruin probability's decay rate.

\end{remark}
\begin{remark} Define $h_n(\theta)=\frac{1}{\mathcal{L}_{W_n}(\theta)}-c\theta+\lambda=\phi_n(\theta)-c\theta+\lambda$. It is easy to see that  (\ref{recursive}) implies the following recursive relation
\begin{equation*}
\begin{split}
 h_{n+1}(\theta)=\lambda e^{-\lambda}e^{h_n(\theta)},
\end{split}
\end{equation*}
with initial condition
\begin{equation*}
\begin{split}
 h_1(\theta)=\phi_1(\theta)-c\theta+\lambda=\lambda E[e^{-\theta\xi}].
\end{split}
\end{equation*}
Using this recursive relation one easily obtains $h_n(\theta)=\lambda g_{n-1}^{\circ}(E[e^{-\theta\xi}])$ for all $n\ge2$. This is an equivalent form for the relation for  $\mathcal{L}_{W_n}(\theta)$ in the proof of the Corollary \ref{cor53}.
\end{remark}

\section{Cram\'er-Lundberg approximation}

Having obtained explicit Laplace transforms and a recursive representation of the survival probability, we now turn to the asymptotic behavior of the ruin probability as the initial reserve grows large. While the transform-based techniques from the previous section provide exact numerical values, they are often not straightforward to interpret intuitively in practical risk management or regulatory settings. To address this issue, we develop a Cramér–Lundberg type approximation for the MIPP-based risk model, which yields a simple yet robust exponential asymptotic expression for the ruin probability. The approximation is built on the identification of a unique adjustment coefficient, or Lundberg exponent, given by the positive solution of a generalized Cramér–Lundberg equation that captures the hierarchical structure of claims. In this section, we derive an explicit formula for the asymptotic constant \(C_n\) and prove that \(\Pi_n(x)\sim C_n e^{-\hat{R}_n x}\) as \(x\to\infty\). We further complement this asymptotic description with the corresponding Lundberg inequality, providing an upper bound that is particularly useful for solvency assessments. A numerical comparison with values obtained from exact Laplace inversion demonstrates the remarkable accuracy of the approximation, highlighting its value as both a theoretical and computational tool within the MIPP framework. Related asymptotic results have been studied extensively in the literature \cite{Shimizu2021}, \cite{asmussen2010}, \cite{Bertoin1994}, \cite{Kelbert2012}, \cite{Klinge2026}, \cite{Albrecher2010}.

Before we derive a Cram\'er-Lundberg approximation in our setting, we first prove few Lemmas. Recall that $g(z)=e^{-\lambda+\lambda z}$ and $g_{n-1}^{\circ}$ denotes $(n-1)-$folded composition of $g(z)$.

\begin{lemma}\label{lem6.1} For each fixed $n\geq 2$, assume the net profit condition $c>\lambda^nm$ holds in our model (\ref{one1}). Also assume $\mathcal{L}_{\xi}(\theta)$ is finite. Then the following equation 
\begin{equation}\label{Lundberg_equation-0}
\lambda\bigl(g_{n-1}^{\circ}(E[e^{R\xi}]) - 1\bigr) = cR,
\end{equation}
has unique positive solution, which we denote by $\hat{R}_n$.    
\end{lemma}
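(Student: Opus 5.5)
Our plan is to analyse the function
\[
H(R)=\lambda\bigl(g_{n-1}^{\circ}(E[e^{R\xi}])-1\bigr)-cR ,\qquad R\ge 0,
\]
on the interval $[0,R_\infty)$ where $R_\infty=\sup\{R: E[e^{R\xi}]<\infty\}$. We read the hypothesis ``$\mathcal{L}_\xi(\theta)$ finite'' as the usual light-tail assumption, namely $R_\infty>0$ and $E[e^{R\xi}]\uparrow\infty$ as $R\uparrow R_\infty$ (steepness). By Lemma \ref{le51} with $t=1$,
\[
g_{n-1}^{\circ}(E[e^{R\xi}])=E\Bigl[e^{R\sum_{i=0}^{V_1^{(n-1)}}\xi_i}\Bigr],
\]
with the analytic continuation to $-\theta=R$. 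Hence $H(R)=\log E\big[e^{-R X^{(n)}_1(0)}\big]$, i.e. $H(R)=\phi_n(-R)$. This makes $H$ a cumulant generating function, so it is convex and finite on $[0,R_\infty)$. To check convexity directly, note that $g$ is increasing and convex, $z\mapsto E[e^{R\xi}]$ is increasing and convex on $[0,R_\infty)$, and a composition of increasing convex maps is convex.

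The key steps, in order, are the following. First, $H(0)=\lambda(g_{n-1}^{\circ}(1)-1)=0$, because $g(1)=1$. Second, differentiate at $0$. By the chain rule, $(g_{n-1}^{\circ})'(1)=\prod_{j=0}^{n-2} g'(g_j^{\circ}(1))=\lambda^{n-1}$, so
\[
H'(0^+)=\lambda\cdot\lambda^{n-1}m-c=\lambda^n m-c<0
\]
by the net profit condition. Therefore $H<0$ on some interval $(0,\varepsilon)$. Third, show that $H(R)\to+\infty$ as $R\uparrow R_\infty$. Set $z=E[e^{R\xi}]\to\infty$. Since $g(z)\ge z$ for large $z$ (indeed $g$ grows exponentially), we get $g_{n-1}^{\circ}(z)\to\infty$ at least exponentially fast. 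If $R_\infty<\infty$ this already beats the linear term $cR$. If $R_\infty=\infty$, then $E[e^{R\xi}]\ge P(\xi>\delta)e^{R\delta}$ for some $\delta>0$, and $g_{n-1}^{\circ}$ of this dominates $cR$ for $n\ge 2$, since even $g$ alone gives $e^{\lambda P(\xi>\delta)e^{R\delta}}$, which is superlinear. Fourth, continuity and the intermediate value theorem give a root $\hat R_n>0$. Uniqueness then follows from convexity: a convex function with $H(0)=0$ and $H'(0)<0$ can have at most one zero in $(0,R_\infty)$. If there were two, $0<R_1<R_2$, then convexity on $[0,R_2]$ together with $H(0)=H(R_2)=0$ would force $H\le 0$ on $[0,R_2]$, and $H(R_1)=0$ at an interior point would force $H\equiv 0$ there, contradicting $H'(0)<0$.

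The main obstacle is the third step, the blow-up at the right endpoint. When $R_\infty<\infty$ and $E[e^{R_\infty\xi}]$ is finite, i.e. the non-steep case, $H$ may stay negative all the way to $R_\infty$ and no root exists. Our first move is to impose (or interpret the hypothesis as) steepness, $\lim_{R\uparrow R_\infty}E[e^{R\xi}]=\infty$, which is the standard assumption in the classical Cramér–Lundberg setting. We would state this explicitly, and remark that it is needed exactly as in the case $n=1$. The remaining steps are routine, since convexity is inherited directly from Lemma \ref{le51}.
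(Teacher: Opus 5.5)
Your proposal is correct and follows essentially the same route as the paper: $H(0)=0$, $H'(0^+)=\lambda^n m-c<0$, blow-up of $H$ at the right end of the domain, and convexity from composing increasing convex maps, which forces exactly one positive crossing. Your explicit handling of a finite abscissa $R_\infty$ with the steepness requirement is a real improvement: the paper lets $R\to\infty$ and so tacitly assumes $E[e^{R\xi}]<\infty$ for all $R$, whereas without steepness the lemma as stated can fail.
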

\begin{proof}Define
\[
\Phi_n(R)=\lambda\bigl(g_{n-1}^{\circ}(E[e^{R\xi}]) - 1\bigr)-cR.
\]
We want to prove that there is a unique $R>0$ such that $\Phi_n(R)=0$.

First note that $g(1)=e^{-\lambda+\lambda}=1$, and by induction this implies $g_{n-1}^{\circ}(1)=1$ as well. Consequently, $\Phi_n(0)=0$, so $R=0$ is always a root. Next,
\[
\Phi'_n(0)=\lambda \frac{d}{dR}g_{n-1}^{\circ}(Ee^{R\xi})\bigg|_{R=0}-c.
\]
Applying the chain rule and using $g_{n-1}^{\circ}(1)=1$ yields
\[
\frac{d}{dR}g_{n-1}^{\circ}(Ee^{R\xi})\bigg|_{R=0}
=(g_{n-1}^{\circ})'(1)\,E\xi.
\]
We now compute $(g_{n-1}^{\circ})'(1)$. For $n=2$, $(g_1^{\circ})'(1)=g'(1)=\lambda$. By induction this generalizes to $(g_{n-1}^{\circ})'(1)=\lambda^{\,n-1}$. Hence
\[
\Phi'_n(0)=\lambda^n m - c,
\]
where $m=E\xi$. Under the net profit condition ($\lambda^n m<c$) we have $\Phi'_n(0)<0$. Thus, starting from $\Phi_n(0)=0$, the function $\Phi_n(R)$ initially decreases for small positive $R$.

We now consider the behavior as $R\to\infty$. Since $\xi$ is strictly positive with positive probability, $E[e^{R\xi}]\to\infty$ as $R\to\infty$. As $z\to\infty$,
\[
g(z)=e^{-\lambda+\lambda z}\to\infty
\]
super-exponentially, and therefore $g_{n-1}^{\circ}(E[e^{R\xi}])\to\infty$ as $R\to\infty$ at a rate faster than any linear function (for $n\ge2$). Consequently,
\[
\frac{cR}{\lambda\, g_{n-1}^{\circ}(E[e^{R\xi}])}\to 0 \quad \text{as } R\to\infty,
\]
so
\[
\lim_{R\to\infty}\Phi_n(R)=\infty.
\]

Next we verify strict convexity of $\Phi_n(R)$ in $R$. The map $R\mapsto E[e^{R\xi}]$ is strictly convex, and $g$ is increasing and strictly convex. Composition of a strictly convex function with an increasing strictly convex function preserves strict convexity, so $R\mapsto g_{n-1}^{\circ}(E[e^{R\xi}])$ is strictly convex. Since $\Phi_n(R)$ is this term multiplied by $\lambda$ minus a linear term $cR$, $\Phi_n$ is strictly convex as well.

We therefore have a strictly convex function $\Phi_n$ with $\Phi_n(0)=0$, $\Phi'_n(0)<0$, and $\lim_{R\to\infty}\Phi_n(R)=\infty$. Such a function crosses the horizontal axis exactly once for $R>0$. Hence there exists a unique positive solution $R>0$ to $\Phi_n(R)=0$.
\end{proof}

We start by proving the following lemma.

\begin{lemma}\label{lem_exp}
For the MIPP $V_t^{(n)}$, the following identity holds:
\begin{equation}\label{lap_mix}
\begin{split}
 E\bigl[V_t^{(n)}z^{V_t^{(n)}}\bigr]
 =\lambda^n t\bigl(g_n^{\circ}(z)\bigr)^t\prod_{i=0}^{n-1}g_i^{\circ}(z),
\end{split}
\end{equation}
where $g_0^{\circ}(z)=z$ and $g_1^{\circ}(z)=g(z)=e^{-\lambda+\lambda z}$.
\end{lemma}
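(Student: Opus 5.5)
The plan is to differentiate the probability generating function of $V_t^{(n)}$ and then unwind the chain rule through the nested compositions. From the computation leading to (\ref{ell}) (equivalently, Lemma \ref{le51} with the claims replaced by the constant $1$), we have for $z\in(0,1]$
\[
E\bigl[z^{V_t^{(n)}}\bigr]=\bigl(g_n^{\circ}(z)\bigr)^t .
\]
This follows by induction: conditioning on $N_t^1$ and using $V_t^{(n)}=V^{(2,n)}_{N_t^1}$, with $V^{(2,n)}\overset{d}{=}V^{(n-1)}$, gives $E[z^{V_t^{(n)}}]=\exp\{\lambda t(g_{n-1}^{\circ}(z)-1)\}=(g(g_{n-1}^{\circ}(z)))^t$.

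Since $E[V_t^{(n)}z^{V_t^{(n)}}]=z\,\frac{d}{dz}E[z^{V_t^{(n)}}]$, and differentiation under the expectation is justified for $0<z<1$ by dominated convergence (the series $\sum_k kz^{k-1}P(V_t^{(n)}=k)$ converges locally uniformly there, and at $z=1$ by monotone convergence since $E V_t^{(n)}=\lambda^n t<\infty$), we get
\[
E\bigl[V_t^{(n)}z^{V_t^{(n)}}\bigr]=z\,t\,\bigl(g_n^{\circ}(z)\bigr)^{t-1}\,(g_n^{\circ})'(z).
\]

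The remaining step is to compute $(g_n^{\circ})'(z)$. Because $g'(w)=\lambda g(w)$, the chain rule applied to $g_n^{\circ}=g\circ g_{n-1}^{\circ}$ gives
\[
(g_n^{\circ})'(z)=\lambda\, g_n^{\circ}(z)\,(g_{n-1}^{\circ})'(z).
\]
Iterating down to $(g_0^{\circ})'(z)=1$ yields $(g_n^{\circ})'(z)=\lambda^n\prod_{i=1}^{n}g_i^{\circ}(z)$. Substituting this and noting that $z=g_0^{\circ}(z)$, the factor $g_n^{\circ}(z)$ in the product combines with $(g_n^{\circ}(z))^{t-1}$ to give
\[
E\bigl[V_t^{(n)}z^{V_t^{(n)}}\bigr]=\lambda^n t\,\bigl(g_n^{\circ}(z)\bigr)^{t}\prod_{i=0}^{n-1}g_i^{\circ}(z),
\]
which is (\ref{lap_mix}).

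For $z>1$, which is the range presumably needed later with $z=E e^{R\xi}$, the same argument works wherever the generating function is finite. Since $g_n^{\circ}$ is entire, $E z^{V_t^{(n)}}<\infty$ for all $z>0$ by the same induction, and by analyticity of power series the differentiation extends to all $z>0$. This extension is the main point to check carefully; the rest is routine bookkeeping of the chain rule, which can also be organised as an induction on $n$.
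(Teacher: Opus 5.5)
Your proof is correct, but it is organised differently from the paper's. The paper never differentiates. It proves (\ref{lap_mix}) itself by induction on $n$: it computes $E[N_t z^{N_t}]=\lambda t z\,(g(z))^t$ directly for $n=1$. In the inductive step it conditions on $N_t^1=k$ and applies the induction hypothesis to $V^{(2,n+1)}_k \overset{d}{=} V^{(n)}_k$. The resulting sum $\sum_k P(N_t^1=k)\,k\,(g_n^{\circ}(z))^k=\lambda t\, g_n^{\circ}(z)\,(g_{n+1}^{\circ}(z))^t$ contributes one new factor $\lambda g_n^{\circ}(z)$ per level.

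You instead take the generating-function identity $E[z^{V_t^{(n)}}]=(g_n^{\circ}(z))^t$ as the only probabilistic input; it is already implicit in (\ref{ell}) and Lemma \ref{le51}. You then obtain the product from the chain rule via $g'=\lambda g$. This shows clearly why the factors $g_0^{\circ},\dots,g_{n-1}^{\circ}$ appear and reduces the lemma to calculus.

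The price is that you must justify differentiating under the expectation. That matters here because Theorem \ref{th63} applies the lemma at $z=E[e^{\hat R_n\xi}]>1$. You handle it correctly: Tonelli in the conditioning step gives $E[z^{V_t^{(n)}}]=(g_n^{\circ}(z))^t<\infty$ for every $z>0$. Hence the power series $\sum_k P(V_t^{(n)}=k)z^k$ has infinite radius of convergence and may be differentiated termwise on $(0,\infty)$.

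The paper's induction avoids this issue altogether. It only sums series of nonnegative terms, so it holds for all $z>0$ with no analytic argument. Its inductive step is, in effect, your chain-rule computation written probabilistically.
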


\begin{proof}
The proof is by induction on $n$. For $n=1$ we obtain
\begin{equation*}
\begin{split}
 E[N_tz^{N_t}]
 &=\sum_{k=0}^{+\infty}P(N_t=k)\,kz^k
 =e^{-\lambda t}\sum_{k=0}^{+\infty}\frac{(\lambda t)^k}{k!}kz^k\\
 &=\lambda t z\, e^{-\lambda t}\sum_{k=0}^{+\infty}\frac{(\lambda t z)^k}{k!}
 =\lambda t z\, e^{-\lambda t+\lambda t z}
 =\lambda t z\,(g(z))^t,
\end{split}
\end{equation*}
which coincides with (\ref{lap_mix}) in the case $n=1$. Now assume the identity holds for some $n$. We then treat the case $n+1$:
\begin{equation*}
\begin{split}
 E\bigl[V_t^{(n+1)}z^{V_t^{(n+1)}}\bigr]
 &=\sum_{k=0}^{+\infty}P(N_t^1=k)\,E\bigl[V_k^{(n)}z^{V_k^{(n)}}\bigr]\\
 &=\sum_{k=0}^{+\infty}\frac{(\lambda t)^k}{k!}e^{-\lambda t}\cdot
   \lambda^n k\bigl(g_n^{\circ}(z)\bigr)^k\prod_{i=0}^{n-1}g_i^{\circ}(z)\\
 &=\lambda^n\prod_{i=0}^{n-1}g_i^{\circ}(z)\,e^{-\lambda t}\lambda t g_n^{\circ}(z)
   \sum_{k=0}^{+\infty}\frac{\bigl(\lambda t g_n^{\circ}(z)\bigr)^k}{k!}\\
 &=\lambda^{n+1}t\prod_{i=0}^n g_i^{\circ}(z)\,
   e^{-\lambda t+\lambda t g_n^{\circ}(z)}\\
 &=\lambda^{n+1}t\bigl(g_{n+1}^{\circ}(z)\bigr)^t\prod_{i=0}^n g_i^{\circ}(z),
\end{split}
\end{equation*}
which establishes the statement for $n+1$ and therefore completes the induction.
\end{proof}

Then, we have the Cram\'er-Lundberg approximation for the model (\ref{one1}) as follows:
\begin{theorem}\label{th63}
Assume that $\hat{R}_n>0$ is the solution to
\begin{equation}\label{Lundberg_equation}
\begin{split}
 \lambda\bigl(g_{n-1}^{\circ}(E[e^{R\xi}]) - 1\bigr) = cR,
\end{split}
\end{equation}
then the ruin probability admits the asymptotic representation
\begin{equation*}
\begin{split}
 \Pi_n(x)\sim C_ne^{-\hat{R}_nx}\quad\text{as} \quad x\to+\infty,
\end{split}
\end{equation*}
where
\begin{equation}\label{C}
\begin{split}
 C_n=\frac{c-\lambda^nm}{\lambda^nE[\xi e^{\hat{R}_n\xi}]\prod_{i=1}^{n-1}g_i^{\circ}(E[e^{\hat{R}_n\xi}])-c}.
\end{split}
\end{equation}

\end{theorem}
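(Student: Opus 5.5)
The plan is to reduce the MIPP-driven model to a classical Cramér–Lundberg model with compound claims, as in the proof of Lemma \ref{le1}. Then I apply the classical Cramér–Lundberg asymptotic (e.g.\ Theorem 5.4.1 in \cite{rolski1999} or Chapter IV of \cite{asmussen2010}) and translate each ingredient back into MIPP quantities using Lemma \ref{le51} and Lemma \ref{lem_exp}.

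\textbf{Step 1: reduction.} Write $X_t^{(n)}(x)\overset{d}{=}x+ct-\sum_{i=1}^{N_t}\eta_i$, where $N_t$ is Poisson with rate $\lambda$ and $\eta_i\overset{d}{=}\sum_{j=1}^{V_1^{(n-1)}}\xi_j$ are i.i.d. Because $\eta$ has an atom at $0$, I would prefer to thin the Poisson stream. By Propositions \ref{sojorn} and \ref{jjj}, the same process is a compound Poisson model with rate $\lambda q_{n-1}$ and claim distribution $G_n$ from the proof of Theorem \ref{th1}, and $G_n$ has no atom at zero. The identities below are unchanged by the thinning, since $\lambda q_{n-1}\bigl(\int e^{Ry}dG_n(y)-1\bigr)=\lambda\bigl(E e^{R\eta}-1\bigr)$, and similarly $\lambda q_{n-1}\int y\,dG_n(y)=\lambda E[\eta]$ and $\lambda q_{n-1}\int ye^{Ry}\,dG_n(y)=\lambda E[\eta e^{R\eta}]$.

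The classical theorem then states that under $c>\lambda E\eta$, and given an adjustment coefficient $R$ solving $\lambda(E e^{R\eta}-1)=cR$ with $E[\eta e^{R\eta}]<\infty$,
\[
\Pi_n(x)\sim \frac{c-\lambda E\eta}{\lambda E[\eta e^{R\eta}]-c}\,e^{-Rx},\qquad x\to\infty .
\]
In continuous time the ladder-height distribution is absolutely continuous, so no non-lattice assumption on $\xi$ is needed.

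\textbf{Step 2: identify the ingredients.} First, $E\eta=E[V_1^{(n-1)}]\,m=\lambda^{n-1}m$, so $c-\lambda E\eta=c-\lambda^n m$, and the net profit condition coincides with $c>\lambda^n m$. Second, by Lemma \ref{le51} with $t=1$ (and $-\theta$ replaced by $R$), $Ee^{R\eta}=g_{n-1}^{\circ}(E[e^{R\xi}])$. The classical Lundberg equation is therefore exactly (\ref{Lundberg_equation}), and Lemma \ref{lem6.1} provides its unique positive root $\hat R_n$.

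Third, for the constant, put $M(R)=E[e^{R\xi}]$ and $V=V_1^{(n-1)}$. Conditioning on $V$ gives
\[
E[\eta e^{R\eta}]=E\bigl[V M(R)^{V-1}\bigr]\,E[\xi e^{R\xi}].
\]
Lemma \ref{lem_exp}, applied with $n-1$ and $t=1$, gives
\[
E[Vz^V]=\lambda^{n-1}g_{n-1}^{\circ}(z)\prod_{i=0}^{n-2}g_i^{\circ}(z).
\]
Dividing by $z=g_0^{\circ}(z)$ yields $E[Vz^{V-1}]=\lambda^{n-1}\prod_{i=1}^{n-1}g_i^{\circ}(z)$. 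Hence
\[
\lambda E[\eta e^{\hat R_n\eta}]=\lambda^nE[\xi e^{\hat R_n\xi}]\prod_{i=1}^{n-1}g_i^{\circ}(E[e^{\hat R_n\xi}]).
\]
Substituting this into the classical constant produces exactly $C_n$ in (\ref{C}).

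\textbf{Main obstacle.} The delicate point is checking the hypotheses of the classical theorem rather than the algebra. Specifically, I need $E[\eta e^{\hat R_n\eta}]<\infty$, which by the computation above is equivalent to $E[\xi e^{\hat R_n\xi}]<\infty$ together with $M(\hat R_n)<\infty$. I would add the standing assumption that $E[e^{R\xi}]$ is finite in a neighbourhood of $\hat R_n$, matching the finiteness assumption in Lemma \ref{lem6.1}.

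I also need the denominator of $C_n$ to be positive. This follows from the strict convexity established in Lemma \ref{lem6.1}. The function $\Phi_n(R)=\lambda\bigl(g_{n-1}^{\circ}(M(R))-1\bigr)-cR$ vanishes at $0$ and at $\hat R_n$, so $\Phi_n'(\hat R_n)>0$. Moreover $\Phi_n'(\hat R_n)=\lambda E[\eta e^{\hat R_n\eta}]-c$, which is precisely the denominator of $C_n$.
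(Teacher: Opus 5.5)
Your proposal is correct and takes essentially the same route as the paper. Both reduce to a classical compound Poisson model (the paper works directly with the thinned claim law $G_n$ at rate $\lambda q_{n-1}$), insert the classical constant $\frac{c-\lambda E\eta}{\lambda E[\eta e^{\hat R_n\eta}]-c}$, and evaluate its ingredients with Lemma \ref{le51} and Lemma \ref{lem_exp}. Your extra checks, namely finiteness of $E[\xi e^{\hat R_n\xi}]$ and positivity of the denominator via $\Phi_n'(\hat R_n)>0$ from the strict convexity in Lemma \ref{lem6.1}, supply hypotheses that the paper's proof leaves implicit.
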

\begin{proof}
We first denote by $W_n$ the jump sizes of $V^{(n-1)}_{J_1^{(n)}}$, i.e.
\begin{equation*}
\begin{split}
 P(W_n\le y)=\int_0^y dG_n(x)=\sum_{k=1}^{\infty}\frac{P(V_1^{(n-1)}=k)}{q_{n-1}}\int_0^y dF^{*k}(x),
\end{split}
\end{equation*}
which implies
\begin{equation}\label{w_n}
\begin{split}
 E[e^{\theta W_n}]&=\sum_{k=1}^{\infty}\frac{P(V_1^{(n-1)}=k)}{q_{n-1}}\int_0^{\infty}e^{\theta x}dF^{*k}(x)\\
 &=\frac{1}{q_{n-1}}\left(\sum_{k=0}^{\infty}P(V_1^{(n-1)}=k)(E[e^{\theta\xi}])^k-P(V_1^{(n-1)}=0)\right)\\
 &=\frac{1}{q_{n-1}}\left(g_{n-1}^{\circ}(E[e^{\theta\xi}])-(1-q_{n-1})\right)\\
 &=\frac{1}{q_{n-1}}\bigl(g_{n-1}^{\circ}(E[e^{\theta\xi}])-1\bigr)+1.
\end{split}
\end{equation}

Hence, the Cramér–Lundberg equation can be rewritten as
\begin{equation*}
\begin{split}
 \lambda q_{n-1}(E[e^{RW_n}]-1)=cR.
\end{split}
\end{equation*}
Substituting (\ref{w_n}) into this equation yields (\ref{Lundberg_equation}), and the root $\hat{R}_n$ is the usual adjustment coefficient (or Lundberg exponent).

Next, by \cite{asmussen2010}, \cite{2016Yuliya}, we have
\begin{equation}\label{c_n}
\begin{split}
 C_n=\frac{c-\lambda q_{n-1}m_{W_n}}{\lambda q_{n-1}E[W_ne^{\hat{R}_nW_n}]-c},
\end{split}
\end{equation}
where
\begin{equation}\label{mu_n}
\begin{split}
 m_{W_n}&=\int_0^{+\infty}x\,dG_n(x)=\sum_{k=1}^{\infty}\frac{P(V_1^{(n-1)}=k)}{q_{n-1}}\int_0^{+\infty}x\,dF^{*k}(x)
 =\sum_{k=1}^{\infty}\frac{P(V_1^{(n-1)}=k)km}{q_{n-1}}\\
 &=\frac{m}{q_{n-1}}E[V_1^{(n-1)}]=\frac{\lambda^{n-1}m}{q_{n-1}},
\end{split}
\end{equation}
and, using Lemma \ref{lem_exp},
\begin{equation}\label{mix_wn}
\begin{split}
 E[W_ne^{\hat{R}_nW_n}]&=\sum_{k=1}^{+\infty}\frac{P(V_1^{(n-1)}=k)}{q_{n-1}}E\left[\sum_{i=1}^k\xi_ie^{\hat{R}_n\sum_{i=1}^k\xi_i}\right]\\
 &=\sum_{k=1}^{+\infty}\frac{P(V_1^{(n-1)}=k)}{q_{n-1}}kE[\xi e^{\hat{R}_n\xi}](E[e^{\hat{R}_n\xi}])^{k-1}\\
 &=\frac{1}{q_{n-1}}\frac{E[\xi e^{\hat{R}_n\xi}]}{E[e^{\hat{R}_n\xi}]}\sum_{k=0}^{+\infty}P(V_1^{n-1}=k)k(E[e^{\hat{R}_n\xi}])^k\\
 &=\frac{1}{q_{n-1}}\frac{E[\xi e^{\hat{R}_n\xi}]}{E[e^{\hat{R}_n\xi}]}\,E\bigl[V_1^{(n-1)}(E[e^{\hat{R}_n\xi}])^{V_1^{(n-1)}}\bigr]\\
 &=\frac{1}{q_{n-1}}\frac{E[\xi e^{\hat{R}_n\xi}]}{E[e^{\hat{R}_n\xi}]}\cdot\lambda^{n-1}\prod_{i=0}^{n-1}g_i^{\circ}(E[e^{\hat{R}_n\xi}])\\
 &=\frac{\lambda^{n-1}}{q_{n-1}}E[\xi e^{\hat{R}_n\xi}]\prod_{i=1}^{n-1}g_i^{\circ}(E[e^{\hat{R}_n\xi}]).
\end{split}
\end{equation}
Finally, inserting (\ref{mu_n}) and (\ref{mix_wn}) into (\ref{c_n}) yields the expression for $C_n$ in (\ref{C}).
\end{proof}

\begin{example}
When $n=2$, the Cram\'er–Lundberg equation reduces to
\begin{equation*}
\begin{split}
 \lambda e^{-\lambda+\lambda E[e^{R\xi}]}-\lambda =cR
\end{split}
\end{equation*}
and
\begin{equation*}
\begin{split}
 C_2=\frac{c-\lambda^2m}{\lambda^2E[\xi e^{\hat{R}_n\xi}]e^{-\lambda+\lambda E[e^{\hat{R}_n\xi}]}-c}.
\end{split}
\end{equation*}

If, in addition, we assume that $\xi$ is exponentially distributed, the Cram\'er–Lundberg equation becomes
\begin{equation*}
\begin{split}
 \lambda e^{-\lambda+\frac{\lambda}{1-m R}}-\lambda=cR
\end{split}
\end{equation*}
and
\begin{equation*}
\begin{split}
 C_2=\frac{c-\lambda^2m}{\frac{\lambda^2m e^{-\lambda+\frac{\lambda}{1-m\hat{R}_2}}}{(1-m\hat{R}_2)^2}-c}
      =\frac{c-\lambda^2m}{\frac{\lambda m (c\hat{R}_2+\lambda)}{(1-m\hat{R}_2)^2}-c}.
\end{split}
\end{equation*}

Furthermore, taking $m=\lambda=1$ and $c=2$, the Cram\'er–Lundberg equation simplifies to
\begin{equation*}
\begin{split}
 e^{-1+\frac{1}{1-R}}-1=2R,
\end{split}
\end{equation*}
from which we obtain $\hat{R}_2=0.3433$. Substituting this value yields
\begin{equation*}
\begin{split}
 C_2=\frac{1}{\frac{2\hat{R}_2+1}{(1-\hat{R}_2)^2}-2}=0.5235,
\end{split}
\end{equation*}
and thus the ruin probability satisfies
\begin{equation*}
\begin{split}
 \Pi_2(x)\sim 0.5235e^{-0.3433x}\quad\text{as}\quad x\to+\infty.
\end{split}
\end{equation*}

To evaluate the quality of this approximation, we plot a graph comparing the approximation formula with the numerical inverse Laplace transform method, which serves as a benchmark.

\begin{figure}[htbp]  
    \centering
    \includegraphics[width=0.8\textwidth]{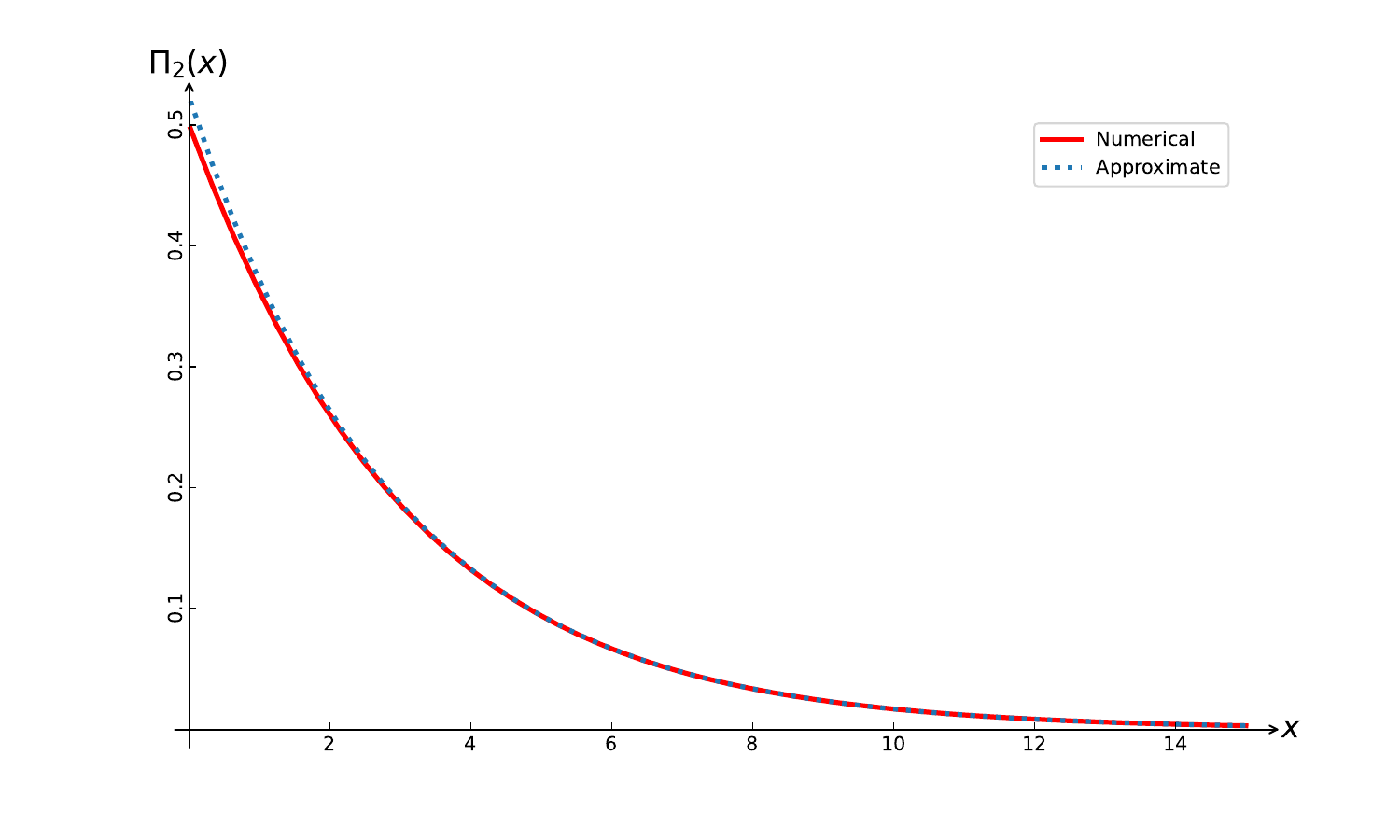} 
    \caption{Comparison of the Cram\'er–Lundberg approximation with numerical method}
    \label{fig:1}
\end{figure}
From Figure \ref{fig:1}, we observe that, as $x\to+\infty$, the approximation becomes indistinguishable from the numerical results.
\end{example}



\newpage

\begin{theorem}
With $\hat{R}_n$ as introduced in Theorem \ref{th63}, the process
\begin{equation*}
\begin{split}
 \exp\left\{-\hat{R}_n\left(ct-\sum_{i=0}^{V_t^{(n)}}\xi_i\right)\right\}
\end{split}
\end{equation*}
forms a martingale. Moreover, the following inequality holds:
\begin{equation*}\label{lundberg_inequality}
\begin{split}
 \Pi_n(x)\le e^{-\hat{R}_n x}.
\end{split}
\end{equation*}
\end{theorem}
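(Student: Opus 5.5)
The plan is to treat $Y_t:=ct-\sum_{i=0}^{V_t^{(n)}}\xi_i=X_t^{(n)}(0)$ as a L\'evy process and use the standard exponential-martingale argument, followed by optional stopping at the ruin time.

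\textbf{Step 1 (moment generating function).} First compute $E[e^{-R Y_t}]$ for $R$ in the domain where $E[e^{R\xi}]<\infty$. Lemma \ref{le51} (equivalently Lemma \ref{lem_exp} and the exponent recursion (\ref{ell})) is stated with $e^{-\theta\xi}$. Repeating the same induction with $e^{R\xi}$ in place of $e^{-\theta\xi}$ gives
\[
E\Big[e^{R\sum_{i=0}^{V_t^{(n)}}\xi_i}\Big]=\big(g_n^{\circ}(E[e^{R\xi}])\big)^t=\exp\big\{t\lambda\big(g_{n-1}^{\circ}(E[e^{R\xi}])-1\big)\big\},
\]
where the last equality uses $g_n^{\circ}=g\circ g_{n-1}^{\circ}$ and $\log g(z)=\lambda(z-1)$. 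Hence
\[
E[e^{-RY_t}]=\exp\{t\,\Phi_n(R)\},\qquad \Phi_n(R)=\lambda\big(g_{n-1}^{\circ}(E[e^{R\xi}])-1\big)-cR,
\]
which is the function from Lemma \ref{lem6.1}.

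\textbf{Step 2 (martingale).} Since $\hat R_n$ is a root, $\Phi_n(\hat R_n)=0$, so $E[e^{-\hat R_nY_t}]=1$ for all $t$. The process $Y$ has stationary independent increments: the aggregate claim process is a compound Poisson process (the subordinated structure from the proof of Lemma \ref{le1}) and the drift is deterministic. For $s<t$ this gives
\[
E\big[e^{-\hat R_nY_t}\mid\mathcal F_s\big]=e^{-\hat R_nY_s}\,E\big[e^{-\hat R_n(Y_t-Y_s)}\big]=e^{-\hat R_nY_s}.
\]
Integrability follows from the finiteness of the moment generating function at $\hat R_n$, which is implicit in $\hat R_n$ being a finite root.

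\textbf{Step 3 (Lundberg bound).} Let $\tau=\inf\{t:x+Y_t<0\}$, so that $\Pi_n(x)=P(\tau<\infty)$. Apply optional stopping to the bounded stopping time $\tau\wedge t$:
\[
1=E\big[e^{-\hat R_nY_{\tau\wedge t}}\big]\ge E\big[e^{-\hat R_nY_\tau}\mathbf 1_{\{\tau\le t\}}\big].
\]
On $\{\tau<\infty\}$ we have $Y_\tau<-x$, so $e^{-\hat R_nY_\tau}>e^{\hat R_nx}$. Therefore $1\ge e^{\hat R_nx}P(\tau\le t)$. Letting $t\to\infty$ by monotone convergence gives $\Pi_n(x)\le e^{-\hat R_nx}$.

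\textbf{Main obstacle.} The delicate step is Step 1: justifying the extension of Lemma \ref{le51} from negative to positive exponents. The generating-function identity $E[z^{V_t^{(n)}}]=(g_n^{\circ}(z))^t$ must hold for $z=E[e^{R\xi}]>1$, which requires finiteness of all the nested expectations. I would handle this by monotone convergence, since all terms are nonnegative, together with the same conditioning on $N^1_t$ as in the inductive proof. The identity then holds as an equality in $[0,\infty]$, and the right-hand side is finite whenever $E[e^{R\xi}]<\infty$.
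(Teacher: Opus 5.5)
Your proposal is correct and follows the paper's proof essentially step for step. You evaluate $E[e^{-\hat R_n Y_t}]=\exp\{t\Phi_n(\hat R_n)\}=1$, use stationary independent increments to get the martingale property, apply optional sampling at $\tau\wedge t$ with $Y_\tau<-x$ on the ruin event, and let $t\to\infty$. Your two refinements only tidy up the same argument: you justify the positive-exponent version of Lemma \ref{le51} by nonnegativity and Tonelli, which the paper uses without comment, and you write the optional-stopping bound with $\mathbf 1_{\{\tau\le t\}}$ instead of the paper's product of a probability and an unconditional expectation.
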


\begin{proof}
First, note that $\exp\{-\hat{R}_n(ct-\sum_{i=0}^{V_t^{(n)}}\xi_i)\}$ is clearly adapted to the filtration $\mathcal{F}_n$. Moreover,
\begin{equation*}
\begin{split}
 E\big[\exp\{-\hat{R}_n(ct-\sum_{i=0}^{V_t^{(n)}}\xi_i)\}\big]
 &=e^{-\hat{R}_nct}E\Big[\exp\Big\{\hat{R}_n\sum_{i=0}^{V_t^{(n)}}\xi_i\Big\}\Big]\\
 &=\exp\Big\{t\big(-\hat{R}_nc+\lambda(g_{n-1}^{\circ}(E[e^{\hat{R}_n\xi}])-1)\big)\Big\}\\
 &=1<\infty,
\end{split}
\end{equation*}
which shows that $\exp\{-\hat{R}_n(ct-\sum_{i=0}^{V_t^{(n)}}\xi_i)\}$ is integrable. Furthermore, for any $0<s<t$, we have
\begin{equation*}
\begin{split}
 E\big[\exp\{-\hat{R}_n(ct-\sum_{i=0}^{V_t^{(n)}}\xi_i)\}\big|\mathcal{F}_s\big]
 &=E\big[\exp\{-\hat{R}_n(ct-\sum_{i=0}^{V_t^{(n)}}\xi_i)\}\big|\mathcal{F}_s\big]\\
 &=\exp\{-\hat{R}_n(cs-\sum_{i=0}^{V_s^{(n)}}\xi_i)\} \,
    E\Big[\exp\Big\{-\hat{R}_n\Big((t-s)-\sum_{i=V_s^{(n)}}^{V_t^{(n)}}\xi_i\Big)\Big\}\Big]\\
 &=\exp\{-\hat{R}_n(cs-\sum_{i=0}^{V_s^{(n)}}\xi_i)\} \,
    E\Big[\exp\Big\{-\hat{R}_n\Big((t-s)-\sum_{i=0}^{V_{t-s}^{(n)}}\xi_i\Big)\Big\}\Big]\\
 &=\exp\{-\hat{R}_n(cs-\sum_{i=0}^{V_s^{(n)}}\xi_i)\}.
\end{split}
\end{equation*}
The third equality uses the independent and stationary increments of $V_t^{(n)}$, and the fourth follows from Theorem \ref{th63}.  

To prove the second claim, define the ruin time by $T_n(x)=\inf\{t>0: X_t^{(n)}(x)<0\}$. Then $T_n(x)$ is evidently an $\mathcal{F}_t$-stopping time. Hence, for any $t>0$, the random time $T_n(x)\wedge t$ is a bounded $\mathcal{F}_t$-stopping time. Thus, by the optional sampling theorem,
\begin{equation}\label{martingale}
\begin{split}
 1
 &=E\big[\exp\{-\hat{R}_n(c\cdot 0-\sum_{i=0}^{V_0^{(n)}}\xi_i)\}\big]\\
 &=E\big[\exp\{-\hat{R}_n(c(T_n(x)\wedge t)-\sum_{i=0}^{V_{T_n(x)\wedge t}^{(n)}}\xi_i)\}\big]\\
 &=P(T_n(x)<t)\,
    E\big[\exp\{-\hat{R}_n(cT_n(x)-\sum_{i=0}^{V_{T_n(x)}^{(n)}}\xi_i)\}\big]\\
 &\quad +P(T_n(x)\ge t)\,
    E\big[\exp\{-\hat{R}_n(ct-\sum_{i=0}^{V_t^{(n)}}\xi_i)\}\big]\\
 &\ge P(T_n(x)<t)\,
    E\big[\exp\{-\hat{R}_n(cT_n(x)-\sum_{i=0}^{V_{T_n(x)}^{(n)}}\xi_i)\}\big]\\
 &\ge P(T_n(x)<t)e^{\hat{R}_nx},
\end{split}
\end{equation}
where the last inequality comes from the fact that when ruin occurs, the surplus process must be strictly less than $-x$. From (\ref{martingale}) we obtain
\begin{equation*}
 P(T_n(x)<t)\le e^{-\hat{R}_nx}.
\end{equation*}
Letting $t\to+\infty$ yields
\begin{equation*}
 \Pi_n(x)\le e^{-\hat{R}_nx}.
\end{equation*}

\end{proof}

The inequality (\ref{lundberg_inequality}) is commonly referred to in the literature as the Lundberg inequality.

\section{Conclusion}

In this paper, we have conducted a comprehensive ruin-theoretic analysis of an insurer's surplus process driven by a Multiply Iterated Poisson Process (MIPP). This framework extends the classical Cramér–Lundberg model by incorporating a hierarchical claim arrival structure that captures the clustered, cascading nature of losses observed in catastrophe and liability insurance. By leveraging the rich probabilistic properties of the MIPP—including its Lévy exponent, jump-time distribution, and probability mass function—we have derived a suite of analytical results that bridge classical ruin theory with the demands of modern risk management.

Our investigation began by establishing the critical net profit condition for the MIPP-driven surplus process. We showed that the iteration depth $n$ effectively amplifies the mean claim intensity by a factor of $\lambda^n$, so that ruin occurs almost surely whenever the premium rate falls below the threshold $\lambda^nm$. Conversely, when the safety loading is positive, ruin is no longer certain and the survival probability tends to unity as the initial capital grows large. This result generalizes the classical net profit condition and highlights the central role of the hierarchical structure in determining the insurer's long-term solvency.

We then introduced a powerful Laplace transform analysis that yielded an explicit closed-form expression for the Laplace transform of the survival probability. This formula not only provides a practical route for computing ruin probabilities via numerical inversion techniques but also reveals a remarkable recursive structure: the Laplace transform at iteration level $n+1$
can be obtained directly from that at level $n$. This recursive representation offers an efficient computational scheme that bypasses the need to solve Volterra equations numerically, making the MIPP framework accessible for practical implementation. Moreover, we established a direct and elegant relationship between the survival probability and the scale function of the surplus process, confirming the well-known duality between these objects in the context of spectrally positive Lévy processes.

Finally, we developed a Cramér–Lundberg-type asymptotic approximation for the ruin probability. Under the net profit condition, we proved that the ruin probability decays exponentially as the initial surplus grows large, with a decay rate determined by a unique positive adjustment coefficient that solves a generalized Cramér–Lundberg equation. We derived an explicit formula for the asymptotic constant, which depends on the model parameters, the iteration depth, and the nested composition functions that characterize the hierarchical claim structure. This approximation provides a simple and intuitive tool for solvency assessment, particularly when the initial capital is large. We further complemented this asymptotic result with a Lundberg inequality, which offers a robust upper bound on the ruin probability that is invaluable for regulatory capital calculations. Numerical comparisons with exact values obtained from Laplace inversion confirmed the remarkable accuracy of the approximation, even for moderate levels of initial capital.

From a practical standpoint, our results offer several insights for insurance risk management. The phase transition identified in this paper underscores the critical importance of accurately estimating the intensity parameter $\lambda$
 in catastrophe risk modeling. A small misestimation of 
$\lambda$ can lead to dramatically different conclusions about the insurer's long-term solvency, particularly when the iteration depth is large. The recursive transform formulas and the Cramér–Lundberg approximation provide practitioners with computationally efficient tools for evaluating ruin probabilities under complex claim arrival patterns, without resorting to time-consuming Monte Carlo simulations. Furthermore, the explicit dependence of the ruin probability on the iteration depth $n$ and the intensity parameter $\lambda$ enables insurers to assess the impact of hierarchical claim clustering on their capital requirements and to design more informed reinsurance strategies.

There are several avenues for future research. First, one could extend the MIPP framework to allow for dependence between claim sizes and the hierarchical arrival structure, for instance by incorporating claim-size distributions that depend on the iteration level or by introducing correlations between claims within a cluster. Second, the inclusion of dividend payments, investment income, or dynamic premium strategies would make the model more realistic and could reveal additional insights into optimal risk management under clustered losses. Third, a statistical analysis of the MIPP parameters from real catastrophe loss data would be valuable for calibrating the model and for assessing its practical relevance. Fourth, the asymptotic results derived in this paper could be refined to include higher-order corrections or to cover the case of heavy-tailed claim size distributions, where the Cramér–Lundberg approximation may fail. Finally, the recursive structure uncovered in the Laplace transform analysis suggests the possibility of developing even more efficient numerical algorithms, perhaps based on fast Fourier transform or wavelet methods, for computing ruin probabilities in high-dimensional settings.

In summary, this paper has developed a comprehensive theoretical and computational framework for analyzing ruin probabilities in the presence of hierarchical claim clustering. By extending classical tools from risk theory—such as the net profit condition, integro-differential equations, Laplace transforms, scale functions, and the Cramér–Lundberg approximation—to the MIPP setting, we have provided both conceptual advances and practical methods for solvency analysis, capital management, and reinsurance planning in environments where claims exhibit the clustered, cascading patterns characteristic of catastrophic events.

\newpage
\printbibliography

\end{document}